\begin{document}
   \doublespacing
   
   \title{Noether's Problem on Semidirect Product Groups}
   \author{Huah Chu}   
   \author{
      Shang Huang\footnote{
         Corresponding author. \\
         \emph{E-mail addresses: }
         \href{mailto:hchu@math.ntu.edu.tw}{hchu@math.ntu.edu.tw} (H. Chu), 
         \href{mailto:r02221020@ntu.edu.tw}{r02221020@ntu.edu.tw} (S. Huang)
      }
   }
   \affil{
      \small \emph{
         Department of Mathematics, National Taiwan University, Taipei, Taiwan
      }
   }

   \date{}
   \maketitle
   
\begin{abstract}
   Let $K$ be a field, $G$ a finite group.
   Let $G$ act on the function field $L = K(x_{\sigma} : \sigma \in G)$ by $\tau \cdot x_{\sigma} = x_{\tau\sigma}$ for any $\sigma, \tau \in G$.
   Denote the fixed field of the action by $K(G) = L^{G} = \left\{ \frac{f}{g} \in L : \sigma(\frac{f}{g}) = \frac{f}{g}, \forall \sigma \in G \right\}$.
   Noether's problem asks whether $K(G)$ is rational (purely transcendental) over $K$.
   
   It is known that if $G = C_m \rtimes C_n$ is a semidirect product of cyclic groups $C_m$ and $C_n$ with $\mathbb{Z}[\zeta_n]$ a unique factorization domain, and $K$ contains an $e$th primitive root of unity, where $e$ is the exponent of $G$, then $K(G)$ is rational over $K$.
   
   In this paper, we give another criteria to determine whether $K(C_m \rtimes C_n)$ is rational over $K$.
   In particular, if $p, q$ are prime numbers and there exists $x \in \mathbb{Z}[\zeta_q]$ such that the norm $N_{\mathbb{Q}(\zeta_q)/\mathbb{Q}}(x) = p$, then $\mathbb{C}(C_{p} \rtimes C_{q})$ is rational over $\mathbb{C}$.
   
   \bigskip \bigskip \bigskip
   \emph{Keywords:} Noether's problem; Rationality problem; Semidirect product group; Monomial action.
\end{abstract}

   \clearpage
\section{Introduction}
Let $G$ be a finite group and $K(x_{\sigma} : \sigma \in G)$ be the function field over a field $K$ of $\order(G)$ variables.
The action of $G$ on $K(x_{\sigma} : \sigma \in G)$ is defined as $\tau \cdot x_{\sigma} = x_{\tau\sigma}$ for all $\tau, \sigma \in G$.
The fixed field is
\begin{displaymath}
	K(G) = K(x_{\sigma} : \sigma \in G)^G = \left\{
	   \dfrac{f}{g} \in K(x_{\sigma} : \sigma \in G) : \sigma\left(\dfrac{f}{g}\right) = \dfrac{f}{g}, \forall \sigma \in G
	\right\}.
\end{displaymath}

Noether's problem asks whether $K(G)$ is rational (or, equivalently, purely transcendental) over $K$.

The answer to Noether's problem depends on the group $G$ and also on the field $K$.
In 1915, Fischer \cite{Fischer1915} proved that if $G$ is an abelian group of exponent $e$ and $K$ contains a primitive $e$th root of unity, then $K(G)$ is rational over $K$.

Swan \cite{Swan1983} provided the first counter-example, proving that if $C_n$ is a cyclic group of order $n$, then $\mathbb{Q}(C_n)$ is not rational over $\mathbb{Q}$ when $n = 47, 113$ or $233$, etc.
In fact, when $K$ is any field and $G$ is a finite abelian group, Lenstra \cite{Lenstra1974} has found a necessary and sufficient condition for $K(G)$ to be rational.

The Noether's problem for non-abelian groups is more complicated.

In the case of $p$-groups, Saltman \cite{Saltman1984-npoaacf} gave the first counter-example, proving that there exists a group of order $p^{9}$ which is not rational.
Bogomolov \cite{Bogomolov1988} furthur proved that there exists a group of order $p^{6}$ which is not rational.

However, when $G$ is a $p$-group of order $p^{n}$ and $n \leq 4$, Chu and Kang \cite{Chu2001} has proved that $K(G)$ is rational over $K$ if $K$ contains enough primitive roots of unity.
And Chu, Hoshi, Hu, Kang, Kunyavskii, Prokhorov \cite{Chu2008, Chu2009, Chu2015} discussed the rationality problem when $n = 5$ or $6$.

For direct product of groups, there is a reduction theorem. Let $K$ be any field, $H$ and $G$ be finite groups.
If $K(H)$ is rational over $K$, so is $K(H \times G)$ over $K(G)$.
In particular, if both $K(H)$ and $K(G)$ are rational over $K$, so is $K(H \times G)$ over $K$. \cite{Saltman1982, Kang2009-rtfn}

When $G$ is neither an abelian group nor a $p$-group, the simplest case is $G = C_m \rtimes C_n$, which is a non-abelian semidirect product of cyclic groups $C_m$ and $C_n$.
In 2009, Kang generalized the results of \cite{Haeuslein1971, Hajja1983-riomgola}, proving that
\begin{thm} [\cite{Kang2009-rpfsmg}]
   Let $K$ be a field and $G$ be a finite group. Assume that $G$ contains an abelian normal subgroup $H$ such that $G/H$ is cyclic of prime order $n$. 
   Suppose that a primitive $e$th root of unity $\zeta_{e}$ lies in $K$, where $e$ is the exponent of $G$, and $\mathbb{Z}[\zeta_n]$ is a unique factorization domain. 
   If $G \rightarrow GL(V)$ is a finite-dimensional linear representation of $G$ over $K$, then $K(V)^{G}$ is rational over $K$.
\end{thm}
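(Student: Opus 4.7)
The plan is to reduce the given linear representation to a monomial one, then extract $H$-invariants to obtain a monomial action of the cyclic quotient $G/H$, and finally exploit the UFD hypothesis via the structure theorem for modules over $\mathbb{Z}[\zeta_n]$.

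First I would diagonalize the action of $H$. Since $H$ is abelian of exponent dividing $e$ and $\zeta_e \in K$, the representation $H \to GL(V)$ is simultaneously diagonalizable, yielding a basis of eigenvectors $x_1,\ldots,x_m$ with $h\cdot x_i = \chi_i(h)\,x_i$ for characters $\chi_i \colon H \to K^\times$. Normality of $H$ forces $G/H$ to permute the multiset $\{\chi_1,\ldots,\chi_m\}$ via conjugation, so lifting gives a monomial action of $G$ on $K(V) = K(x_1,\ldots,x_m)$: each $g \in G$ sends $x_i$ to $c_{g,i}\, x_{\pi_g(i)}$ for some scalar $c_{g,i} \in K^\times$ and permutation $\pi_g$.

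Second, I would take $H$-invariants before dealing with $G/H$. The Laurent monomial $\prod x_i^{a_i}$ is $H$-invariant iff the exponent vector lies in the sublattice $M = \ker(\mathbb{Z}^m \to \widehat{H})$, which is free of rank $m$. Choosing a $\mathbb{Z}$-basis $y_1,\ldots,y_m$ of $M$ shows $K(V)^H = K(y_1,\ldots,y_m)$ is rational, and the induced action of $\langle\sigma\rangle = G/H$ on these $y_i$ is again monomial: $\sigma(y_j) = \lambda_j \prod y_k^{b_{kj}}$ for suitable $\lambda_j \in K^\times$ and integer exponents $b_{kj}$. So the problem reduces to rationality of $K(y_1,\ldots,y_m)^{\langle\sigma\rangle}$ under a monomial action of a cyclic group of prime order $n$.

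Third, and this is the technical heart, I would use that $M$ becomes a $\mathbb{Z}[\sigma]/(\sigma^n-1)$-module. Because $n$ is prime, $\sigma^n - 1 = (\sigma-1)\Phi_n(\sigma)$ with coprime factors over $\mathbb{Q}$; after tensoring with $\mathbb{Q}$, $M$ splits into the $\sigma$-fixed part and the $\Phi_n(\sigma)$-part, on which $\sigma$ acts as $\zeta_n$. Over $\mathbb{Z}$ one must be more careful, but the UFD (hence PID, since $\mathbb{Z}[\zeta_n]$ is already Dedekind) hypothesis lets me apply the structure theorem for finitely generated torsion-free modules over $\mathbb{Z}[\zeta_n]$: the $\Phi_n(\sigma)$-part of $M$ becomes a free $\mathbb{Z}[\zeta_n]$-module, and $M$ itself is a direct sum of a trivial $\sigma$-module and finitely many copies of $\mathbb{Z}[\zeta_n]$ on which $\sigma$ acts as multiplication by $\zeta_n$.

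Finally I would construct invariants from this decomposition. Each trivial summand contributes a $\sigma$-fixed monomial (or one needing a single Hilbert 90 correction to absorb the cocycle $\lambda$). Each $\mathbb{Z}[\zeta_n]$-summand gives $n-1$ variables cyclically twisted by $\sigma$; a standard linearization / Hilbert 90 argument (exactly of the type used in \cite{Hajja1983-riomgola, Kang2009-rpfsmg}) produces $n-1$ algebraically independent rational invariants per summand. Assembling these yields $m$ algebraically independent generators of $K(V)^G$ over $K$, proving rationality. The main obstacle is the third step: ensuring that the $\mathbb{Z}[\zeta_n]$-module decomposition genuinely lifts to a decomposition of monomials compatible with the twisting cocycle $(\lambda_j) \in (K^\times)^m$, so that the scalars can be absorbed coherently; this is precisely where the PID property of $\mathbb{Z}[\zeta_n]$ is essential, since otherwise the $\Phi_n(\sigma)$-part would only be a projective (not free) $\mathbb{Z}[\zeta_n]$-module and the cocycle could fail to be a coboundary.
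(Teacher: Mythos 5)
This theorem is quoted by the paper from \cite{Kang2009-rpfsmg} and is not proved here, so your proposal can only be measured against the standard argument behind that citation. Your first two reductions (diagonalizing $H$ using $\zeta_e\in K$, then passing to the lattice $M$ of $H$-invariant Laurent monomials with its induced monomial action of $G/H\cong C_n$) are exactly the standard route and are fine. The genuine gap is in your third step: it is \emph{not} true that a $\mathbb{Z}C_n$-lattice decomposes as (trivial part) $\oplus$ (free $\mathbb{Z}[\zeta_n]$-module) once $\mathbb{Z}[\zeta_n]$ is a PID. The rational splitting coming from $\sigma^n-1=(\sigma-1)\Phi_n(\sigma)$ does not descend to $\mathbb{Z}$. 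The group ring $\mathbb{Z}C_n$ itself is the counterexample: it is an indecomposable lattice sitting in a nonsplit extension $0\to\mathbb{Z}\to\mathbb{Z}C_n\to\mathbb{Z}[\zeta_n]\to 0$ (it is cohomologically trivial, whereas $\hat H^0(C_n,\mathbb{Z})\neq 0$), and it is precisely the lattice produced by the regular representation, i.e.\ by Noether's problem itself. The Diederichsen--Reiner classification says that when the class number of $\mathbb{Z}[\zeta_n]$ is one the indecomposable $\mathbb{Z}C_n$-lattices are $\mathbb{Z}$, $\mathbb{Z}[\zeta_n]$, \emph{and} $\mathbb{Z}C_n$; your argument silently discards the third type, and it is exactly there (and in gluing the twisting scalars across a nonsplit extension) that the real work in \cite{Kang2009-rpfsmg} is done. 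The UFD hypothesis enters in making all ideal summands principal, as you say, but it does not kill the extension classes.

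A secondary weakness: even granting a decomposition of $M$, invoking ``a standard Hilbert 90 argument'' summand by summand does not suffice to absorb the scalars $\lambda_j$, because the cocycle condition couples the scalars across the whole monomial action rather than restricting cleanly to each summand; one needs explicit invariant generators for each of the three lattice types (compare the explicit change of variables carried out for a single such lattice in Section 3 of the present paper, which already occupies most of its length for one family of groups). So the architecture of your proof is the right one, but as written the decomposition step fails and the theorem would not follow.
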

In particular, $\mathbb{C}(C_m \rtimes C_n)$ is rational over $\mathbb{C}$, if $\mathbb{Z}[\zeta_n]$ is a unique factorization domain.

Note that those integers $n$ for which $\mathbb{Z}[\zeta_n]$ is a unique factorization domain are determined by Masley and Montgomery \cite{Masley1976}. 
In fact, $\mathbb{Z}[\zeta_n]$ is a unique factorization domain if and only if $1 \leq n \leq 22$, or $n = 24$, $25$, $26$, $27$, $28$, $30$, $32$, $33$, $34$, $35$, $36$, $38$, $40$, $42$, $45$, $48$, $50$, $54$, $60$, $66$, $70$, $84$, $90$.

But so far as we know, it is still an open question whether there exists some prime pair $(p, q)$ such that $\mathbb{C}(C_p \rtimes C_q)$ is not rational over $\mathbb{C}$.

The primary concern of this research is to prove that given a pair of primes $(p, q)$, the fixed field $\mathbb{C}(C_p \rtimes C_q)$ could be rational over $\mathbb{C}$ if there exists $x \in \mathbb{Z}[\zeta_{q}]$ with norm $p$ (see Theorem \ref{thm:most_important_case}.) 
However the techniques we used throughout the proof suggested that we may not require $p$ to be a prime.

In this paper, we shall prove the more general case about the rationality of $K(C_m \rtimes C_n)$, as stated in the following Main Theorem.


\begin{main_thm}
	Let $m, n$ be positive integers, where $n$ is an odd prime, $K$ be a field such that neither $m$ nor $n$ is multiple of the characteristic of $K$ and both the primitive roots of unity $\zeta_m, \zeta_n$ lie in $K$.
	Let
	\begin{displaymath}
		G = C_m \rtimes_r C_n = \langle \sigma_1, \sigma_2 : \sigma_1^m = \sigma_2^n = 1, \sigma_2^{-1}\sigma_1\sigma_2 = \sigma_1^r \rangle,
	\end{displaymath}
	where $r^{n} \equiv 1 \pmod{m}$. 
	Let $m' = \frac{m}{\gcd (m, r-1)}$. 
	Suppose there exist relatively prime integers $a_1, \alpha_{n-2}, \ldots, \alpha_0 \in \mathbb{Z}$ such that
	\begin{displaymath}
		a_1m' = \alpha_{n-2} r^{n-2} + \alpha_{n-3} r^{n-3} + \cdots + \alpha_1 r + \alpha_0
	\end{displaymath}
	and
	\begin{displaymath}
		x = \alpha_{n-2}\zeta_{n}^{n-2} + \alpha_{n-3}\zeta_{n}^{n-3} + \cdots + \alpha_1\zeta_{n} + \alpha_0
		   \in \mathbb{Z}[\zeta_{n}]
	\end{displaymath}
	satisfies the norm $N_{\mathbb{Q}(\zeta_{n})/\mathbb{Q}}(x) = m'$.
	Then $K(G)$ is rational over $K$.

	In particular, if $p, q$ are odd primes and there exists $x \in \mathbb{Z}[\zeta_{q}]$ such that $N_{\mathbb{Q}(\zeta_q)/\mathbb{Q}}(x) = p$.
	Then $K(C_p \rtimes C_q)$ is rational over $K$.
	
	Moreover, if $q < 23$, $K(C_{p} \rtimes C_{q})$ is rational over $K$.
\end{main_thm}

The conditions seem to be artificial. But, in fact, we were able to find a class of groups that meet the conditions (see Corollary \ref{cor:class_of_semidirect_prod_of_nonsimple_grps}.)

The problem can be reduced to a Noether-Saltman problem as follows.
Given a finite group $G$ and a $\mathbb{Z}G$-lattice $A$, let $K(A)$ be the quotient field of the group algebra of the free multiplicative abelian group $A$. A Noether-Saltman problem is the following: Is $K(A)^{G}$ rational or stably rational over $K$? (C.f. \cite{Beneish2016})

In the rest of the paper, we first list some preliminary results and lemmas in Section \ref{sec:preliminaries}.
In Section \ref{ch:noethers_problem}, we prove the Main Theorem in the following steps.
Let $G = C_m \rtimes C_n$.
To prove the rationality of $K(G)$, we actually prove the rationality of $K(V)^{G}$ where $V$ is a finite dimensional faithful representation of $G$, such that $K(V)$ is $G$-stably isomorphic to $K(x_{\sigma}:\sigma \in G)$ (see the first paragraph in Section 3.1.) 
Then we reduce the problem to Noether-Saltman problem by showing that $K(V)^{C_m} = K(M)$, where $M$ is a $\mathbb{Z}C_{n}$-lattice and $K(M)$ is the quotient field of the group (Section 3.1.)
We complete the proof by manipulating matrices to find a lattice isomorphic to $M$, whose corresponding fixed subfield is rational over $K$ (Section 3.2, 3.3.)

Finally, we show some consequences of the Main Theorem in Section \ref{sec:cors_and_examples}, and conclude the paper by listing some examples which conform to the conditions of the Main Theorem.


\begin{notations}
Given any $n \in \mathbb{N}$, we use the symbol $[n]$ to denote the set $\{ 1, 2, \ldots, n \}$.
When we write $\{ i_1, \ldots, i_k \}^o \subseteq [n]$,
we shall mean that it's a subset of $[n]$ consisting of elements $i_1, \ldots, i_k$ with order $i_1 < \ldots < i_k$.
If $S$ is a finite set, we shall denote $|S|$ the number of elements in $S$.

Let $\mathcal{A} \in M_{m \times n}(R)$, an $m \times n$ matrix over a commutative ring $R$,
we use the corresponding symbols $\mathfrak{a}_{i}$ and $\mathfrak{a}^{i}$ to denote
the $i$th column vector and row vector of $\mathcal{A}$, respectively.
If $S \subseteq [m]$ and $T \subseteq [n]$, the symbol $\mathcal{A}_{S,T}$ denotes the minor obtained from $\mathcal{A}$ by taking rows in $S$ and columns in $T$.
We also use symbols $\mathcal{A}_{(T)}, \mathcal{A}^{(S)}$ to denote submatrices of $\mathcal{A}$ obtained by deleting columns in $T$ and rows in $S$, respectively.
If $\mathcal{A}$ is a square matrix and $1 \leq i, j \leq n$,
the symbol $\mathcal{A}_{ij}$ denotes the minor of $\mathcal{A}$ obtained by deleting the $i$th row and $j$th column of $\mathcal{A}$.
We shall use the symbol $\det \mathcal{A}$ or $|\mathcal{A}|$ to denote the determinant of $\mathcal{A}$
and the symbol $\adj \mathcal{A}$ to indicate the adjoint matrix of $\mathcal{A}$.

\end{notations}

   \section{Preliminaries} \label{sec:preliminaries}
We recall some preliminary results which will be used in our proof.

\begin{thm}[{\cite{Fischer1915}}] \label{thm:abel_case}
   If $G$ is an abelian group of exponent $e$ $(= \lcm \{ \order(\sigma) : \sigma \in G \})$.
   Suppose a primitive $e$-th root of unity $\zeta_e$ lies in $K$, then $K(G)$ is rational over $K$.
\end{thm}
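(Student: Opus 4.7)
The plan is to diagonalize the regular permutation action of $G$ on $\{x_\sigma\}$ using the characters of $G$, turning it into a diagonal monomial action by roots of unity, and then to exhibit an explicit transcendence basis of invariants. This is possible because $G$ is abelian and $\zeta_e \in K$: every character $\chi \in \hat G = \mathrm{Hom}(G, K^\times)$ takes values in $K$, and $|\hat G| = |G|$.

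First, I would set
\[
   y_\chi = \sum_{\sigma \in G} \chi(\sigma)\, x_\sigma \qquad \text{for each } \chi \in \hat G.
\]
The transition matrix $[\chi(\sigma)]_{\chi, \sigma}$ is invertible over $K$ by orthogonality of characters, so $\{y_\chi\}_{\chi \in \hat G}$ is again a transcendence basis of $L = K(x_\sigma : \sigma \in G)$, i.e.\ $L = K(y_\chi : \chi \in \hat G)$. A direct computation
\[
   \tau \cdot y_\chi = \sum_{\sigma} \chi(\sigma)\, x_{\tau\sigma} = \chi(\tau^{-1})\, y_\chi
\]
shows that the $G$-action on $L$ is diagonal on the new basis, scaling each $y_\chi$ by a root of unity that already lies in $K$.

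Next I would pick generators $\chi_1, \dots, \chi_k$ of $\hat G$ with orders $n_1, \dots, n_k$ (so $n_1 \cdots n_k = |G|$) and write each non-trivial character uniquely as $\chi = \chi_1^{a_1} \cdots \chi_k^{a_k}$ with $0 \le a_i < n_i$. Let $\chi_0$ denote the trivial character. Define
\[
   w_0 = y_{\chi_0}, \qquad w_i = y_{\chi_i}^{n_i} \ (i = 1, \dots, k), \qquad w_\chi = \frac{y_\chi}{y_{\chi_1}^{a_1} \cdots y_{\chi_k}^{a_k}}
\]
for each remaining $\chi$. From the diagonal action, each of these $1 + k + (|G| - k - 1) = |G|$ elements is immediately $G$-invariant.

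Finally I would verify that $L^G$ equals $K(w_0, w_1, \dots, w_k, \{w_\chi\})$ and is purely transcendental of degree $|G|$ over $K$. From the defining relations one recovers every $y_\chi$ after extracting suitable $n_i$-th roots of the $w_i$, which yields $[L : K(w_0, w_1, \dots, w_k, \{w_\chi\})] \le n_1 \cdots n_k = |G|$. Since $G$ acts faithfully on $L$, we also have $[L : L^G] = |G|$, and combined with $K(w_0, w_1, \dots, w_k, \{w_\chi\}) \subseteq L^G$ this forces equality. A transcendence-degree count then shows these $|G|$ generators must be algebraically independent over $K$, and therefore $K(G) = L^G$ is rational over $K$. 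The main subtlety of the argument is the final degree bound; otherwise the proof is a direct reduction to the standard fact that a diagonal monomial action by roots of unity present in the base field has rational invariants.
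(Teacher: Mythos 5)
The paper does not prove this theorem: it is recalled in the Preliminaries and attributed to Fischer, so there is no in-paper argument to compare against. Your proof is the classical one and is correct. A few points are worth making explicit. The hypothesis that a \emph{primitive} $e$-th root of unity lies in $K$ forces the characteristic of $K$ not to divide $e$; since $e$ and $|G|$ have the same prime divisors when $G$ is abelian, $|G|$ is invertible in $K$, which is exactly what makes the character matrix $[\chi(\sigma)]$ invertible via the orthogonality relations and gives $|\hat G|=|G|$. The invariance of $w_0$, the $w_i$ and the $w_\chi$ is immediate from the diagonal action; the bound $[L:K(w_0,w_1,\ldots,w_k,\{w_\chi\})]\le n_1\cdots n_k=|G|$ holds because adjoining the $y_{\chi_i}$ (each an $n_i$-th root of $w_i$) recovers every $y_\chi$ from $y_\chi=w_\chi\,y_{\chi_1}^{a_1}\cdots y_{\chi_k}^{a_k}$; and $[L:L^G]=|G|$ by Artin's theorem since the regular action is faithful. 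The transcendence-degree count then forces the $|G|$ invariant generators to be algebraically independent, as you say. One cosmetic remark: for $\chi$ equal to one of the chosen generators $\chi_i$ the element $w_\chi$ would be identically $1$, but your indexing already excludes these characters in favor of $w_i=y_{\chi_i}^{n_i}$, so the count of $|G|$ generators is right.
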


\begin{thm}[{\cite[Theorem~1]{Hajja1995}}]
   Let $G$ be a finite group acting on $L(x_1, \ldots, x_n)$, the rational function field of $n$ variables over a field $L$.
   Suppose that
   \begin{enumerate}[label=\textnormal{(\alph*)}, nolistsep]
      \item {
         for any $\sigma \in G$, $\sigma(L) \subset L$\textnormal{;}
      }
      \item {
         the restriction of the action of $G$ to $L$ is faithful\textnormal{;}
      }
      \item {
         for any $\sigma \in G$,
         $
            \begin{pmatrix}
               \sigma(x_1) \\ \sigma(x_2) \\ \vdots \\ \sigma(x_n)
            \end{pmatrix}
            = {
               \mathcal{A}(\sigma) \cdot
               \begin{pmatrix}
                  x_1 \\ x_2 \\ \vdots \\ x_n
               \end{pmatrix}
               + \mathcal{B}(\sigma)
            },
         $
      }
   \end{enumerate}
   where $\mathcal{A}(\sigma) \in GL_n(L)$ and $\mathcal{B}(\sigma)$ is an $n \times 1$ matrix over $L$.
   Then there exist elements $z_1, \ldots, z_n \in L(x_1, \ldots, x_n)$, which are algebraically independent over $L$,
   such that $L(x_1, \ldots, x_n) = L(z_1, \ldots, z_n)$ and $\sigma(z_i) = z_i$ for any $\sigma \in G$, any $1 \leq i \leq n$.
\end{thm}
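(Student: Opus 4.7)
The statement is a classical Galois-descent (``no-name'') result, which I would prove by two successive applications of Hilbert 90---multiplicative then additive---to kill the linear part and the translation part of the action in turn.

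First, conditions (a) and (b) imply that $G$ embeds as a group of field automorphisms of $L$, so $L/L^{G}$ is a finite Galois extension with Galois group $G$. This is precisely the setting in which the cohomology groups $H^{1}(G,GL_{n}(L))$ and $H^{1}(G,L)$ both vanish, by Speiser's theorem and the normal basis theorem respectively; the proof then consists in interpreting the linear and translation parts of the action as Galois $1$-cocycles with values in these groups and trivializing them.

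Writing out $(\sigma\tau)(\mathbf{x}) = \sigma\bigl(\tau(\mathbf{x})\bigr)$ using (c) yields twisted cocycle identities for $\mathcal{A}$ and $\mathcal{B}$. Applying the multiplicative Hilbert 90 to the $GL_{n}(L)$-valued cocycle $\mathcal{A}$ produces $P \in GL_{n}(L)$ such that, after the change of variables $\mathbf{y} = P^{-1}\mathbf{x}$, the action becomes a pure translation $\sigma(\mathbf{y}) = \mathbf{y} + c(\sigma)$ with $c(\sigma) \in L^{n}$, and the cocycle identity for $\mathcal{B}$ descends to the additive cocycle identity $c(\sigma\tau) = c(\sigma) + \sigma(c(\tau))$. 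The additive Hilbert 90, applied componentwise, then supplies $\mathbf{d} \in L^{n}$ with $c(\sigma) = \sigma(\mathbf{d}) - \mathbf{d}$; the final substitution $z_{i} = y_{i} - d_{i}$ produces the desired $G$-invariants.

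Since $\mathbf{z} = P^{-1}\mathbf{x} - \mathbf{d}$ is obtained from $\mathbf{x}$ by an invertible affine transformation with entries in $L$, the $z_{i}$ are algebraically independent over $L$ and satisfy $L(z_{1},\dots,z_{n}) = L(x_{1},\dots,x_{n})$, as required. The step that requires some care is matching the cocycle convention to the Hilbert 90 statement one invokes---which side $\sigma$ acts on in the cocycle identity for $\mathcal{A}$ dictates whether the coboundary has the form $\sigma(P)P^{-1}$ or $P\sigma(P)^{-1}$, and the same sign issue reappears in the additive step---but once this bookkeeping is done each step is a mechanical consequence of the two vanishing cohomology statements. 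An alternative that avoids choosing between the two forms of Hilbert 90 is to homogenize by adjoining a single fixed variable $x_{0}$ with $\sigma(x_{0})=x_{0}$, converting the affine action on $\{x_{1},\dots,x_{n}\}$ into a linear action on $\{x_{0},x_{1},\dots,x_{n}\}$, and then applying Speiser's theorem once to the enlarged $GL_{n+1}(L)$-valued cocycle before dehomogenizing.
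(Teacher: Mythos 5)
The paper does not prove this statement at all---it is quoted verbatim from Hajja--Kang \cite{Hajja1995} as a preliminary tool (only the subsequent Corollary, the reduction from $K(V)^G$ to $K(G)$, is proved in the text)---so there is no in-paper argument to compare yours against. Your proposal is the standard and correct proof of this ``no-name''-type lemma. The key points all check out: conditions (a) and (b), together with the finiteness of $G$, force $\sigma(L)=L$ for each $\sigma$ (a chain of inclusions $L=\sigma^{k}(L)\subseteq\cdots\subseteq\sigma(L)\subseteq L$ collapses), so Artin's theorem makes $L/L^{G}$ Galois with group $G$; the composition law gives the twisted cocycle identity $\mathcal{A}(\sigma\tau)=\sigma(\mathcal{A}(\tau))\,\mathcal{A}(\sigma)$, Speiser's theorem ($H^{1}(G,GL_n(L))=1$) trivializes it, and the residual translation cocycle $c(\sigma\tau)=c(\sigma)+\sigma(c(\tau))$ is killed componentwise by the additive Hilbert 90 (which follows from surjectivity of the trace for the separable extension $L/L^{G}$). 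Since the resulting $z_i$ differ from the $x_i$ by an invertible affine substitution over $L$, they are algebraically independent and generate the same field. You are right that the only delicate point is matching the cocycle convention to the form of the coboundary, and you flag it explicitly. The homogenization alternative you sketch also works, but note it needs one extra line at the end: trivializing the $(n+1)\times(n+1)$ cocycle yields $n+1$ invariant affine functions of $x_1,\dots,x_n$, and you must select $n$ of them with linearly independent linear parts to recover a transcendence basis.
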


\begin{cor} \label{cor:redu_cor}
   Let $G \rightarrow GL(V)$ be a faithful representation which is irreducible or is a direct sum of inequivalent irreducible representations.
   It induces an action of $G$ on $K(V)$. If the fixed subfield $K(V)^G$ is rational over $K$, then $K(G)$ is rational over $K$.
\end{cor}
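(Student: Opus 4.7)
The strategy is to interpret $K(G)$ as the $G$-fixed field of a linear action on the regular representation, and then use the No-Name Lemma above to show that $K(G)$ is a pure transcendental extension of $K(V)^{G}$; rationality of $K(V)^{G}$ over $K$ will then propagate to $K(G)$.

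First I would realize $K(G)$ as $K(V_{\text{reg}})^{G}$, where $V_{\text{reg}} = K[G]$ is the regular $G$-module with basis $\{e_{\sigma}\}_{\sigma \in G}$ and action $\tau \cdot e_{\sigma} = e_{\tau\sigma}$; this is just a restatement of the Noether setup. The key structural step is then to produce a $G$-module decomposition $V_{\text{reg}} \cong V \oplus W$ for some $G$-submodule $W$. Under the paper's standing hypothesis that the characteristic of $K$ does not divide $|G|$, Maschke's theorem makes $V_{\text{reg}}$ semisimple; since each irreducible $G$-module appears in $V_{\text{reg}}$ and the hypothesis on $V$ forces every irreducible constituent of $V$ to have multiplicity at most one, $V$ embeds as a direct summand of $V_{\text{reg}}$.

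Given such a $W$, pick a $K$-basis $y_{1}, \ldots, y_{m}$ of $W$ so that $K(V_{\text{reg}}) = L(y_{1}, \ldots, y_{m})$ with $L := K(V)$. The three hypotheses of the No-Name Lemma are then straightforward to check: (a) $\sigma(L) = L$ because $V$ is $G$-stable, (b) the action of $G$ on $L$ is faithful because $V$ is a faithful representation, and (c) the $y_{j}$'s transform linearly with coefficients in $K \subseteq L$. The lemma supplies algebraically independent $G$-invariants $z_{1}, \ldots, z_{m}$ with $L(y_{1}, \ldots, y_{m}) = L(z_{1}, \ldots, z_{m})$. Since $L/L^{G}$ is Galois with group $G$, adjoining the fixed transcendentals commutes with taking $G$-fixed fields, yielding
\begin{displaymath}
   K(G) = K(V_{\text{reg}})^{G} = L(z_{1}, \ldots, z_{m})^{G} = L^{G}(z_{1}, \ldots, z_{m}) = K(V)^{G}(z_{1}, \ldots, z_{m}),
\end{displaymath}
which is rational over $K(V)^{G}$, and hence over $K$ as soon as $K(V)^{G}$ is.

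The only step that is not automatic is the direct-summand decomposition $V_{\text{reg}} \cong V \oplus W$; this is exactly where the hypothesis that $V$ is irreducible, or a sum of inequivalent irreducibles, is used, as it keeps the multiplicity of each irreducible constituent of $V$ from exceeding its multiplicity in $V_{\text{reg}}$. Everything else is a direct application of the No-Name Lemma together with standard Galois descent for purely transcendental extensions.
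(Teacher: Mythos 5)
Your proof is correct and follows essentially the same route as the paper: embed $V$ into the regular representation, apply the quoted theorem of Hajja with $L = K(V)$ to replace the complementary variables by $G$-invariant ones, and conclude $K(G) = K(V)^G(z_1,\ldots,z_m)$. The only cosmetic difference is that you invoke Maschke to get a $G$-stable complement $W$, whereas the paper's cited theorem already tolerates an affine action (the $\mathcal{B}(\sigma)$ term), so a mere embedding of $V$ into $K[G]$ suffices; both are fine here.
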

\begin{proof}
   It is well known that $V$ can be embedded into the regular representation space $W = \oplus_{\sigma \in G} K \cdot x_{\sigma}$.
   Taking $L = K(V)$ and $K(W) = L(x_1, \cdots, x_n)$ where $n = \dim W - \dim V$.
   Then $K(G) = K(W)^{G}$. By the above theorem, we can find that $K(W) = K(V)(z_1, \cdots, z_n)$ for some $z_i$.
   So $K(G) = K(V)^{G}(z_1, \cdots, z_n)$ and $K(V)^{G}$ is rational imply that $K(G)$ is rational over $K$.
\end{proof}
\begin{thm}[{\cite[Theorem~3.1]{Ahmad2000}}] \label{thm:AHK}
   Let $L$ be any field, $L(x)$ the rational function field of one variable over $L$, and $G$ a finite group acting on $L(x)$.
   Suppose that, for any $\sigma \in G$, $\sigma(L) \subset L$ and $\sigma(x) = a_{\sigma} \cdot x + b_{\sigma}$
   where $a_{\sigma}, b_{\sigma} \in L$ and $a_{\sigma} \neq 0$. Then $L(x)^G = L^G(f)$ for some polynomial $f \in L[x]$.
   In fact, if $m = \min \{ \deg g(x) : g(x) \in L[x]^G \}$,
   then any polynomial $f \in L[x]^G$ with $\deg f = m$ satisfies the property $L(x)^G = L^G(f)$.
\end{thm}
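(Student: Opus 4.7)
The plan is the three-step reduction sketched in the introduction: replace $K(G)$ by $K(V)^G$ for a faithful monomial representation $V$ of $G$; identify $K(V)^{C_m}$ with the fraction field $K(M)$ of the group algebra of an explicit $\mathbb{Z}C_n$-lattice $M$; and use the arithmetic hypothesis to transform $M$ into a lattice whose fixed field can be stripped down variable by variable via Theorem \ref{thm:AHK}.

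For step one, since $\zeta_m \in K$ the faithful character $\chi$ of $C_m$ with $\chi(\sigma_1) = \zeta_m$ is defined over $K$, and the induced $G$-module $V = \mathrm{Ind}_{C_m}^G \chi$ has a basis $v_0, \ldots, v_{n-1}$ (coset representatives $1, \sigma_2, \ldots, \sigma_2^{n-1}$) on which $\sigma_1 \cdot v_i = \zeta_m^{r^i} v_i$ and $\sigma_2 \cdot v_i = v_{(i+1) \bmod n}$. When $r \not\equiv 1 \pmod m$ (the abelian complementary case is already handled by Theorem \ref{thm:abel_case}), the characters $\chi^{r^i}$ are distinct so $V$ is irreducible and faithful, and Corollary \ref{cor:redu_cor} reduces the problem to proving that $K(V)^G$ is rational over $K$.

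For step two, a Laurent monomial $\prod v_i^{a_i}$ is $C_m$-fixed precisely when $a_0 + a_1 r + \cdots + a_{n-1} r^{n-1} \equiv 0 \pmod{m}$; the exponent vectors form a rank-$n$ sublattice $M \subset \mathbb{Z}^n$ of index $m$. Because $\sigma_2$ permutes the $v_i$ strictly (no scalar cocycle in this normalization), $K(V)^{C_m} = K(M)$ and the residual $\sigma_2$-action on $M$ is the cyclic shift of coordinates, so $M$ becomes a $\mathbb{Z}C_n$-lattice and the remaining task is to show $K(M)^{C_n}$ is rational over $K$.

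For step three the arithmetic hypothesis is essential. The assumption $r^n \equiv 1 \pmod{m}$ makes $\mathbb{Z}[\zeta_n] \to \mathbb{Z}/m\mathbb{Z}$, $\zeta_n \mapsto r$, a well-defined ring map, and $a_1 m' = \sum_i \alpha_i r^i$ says that $x$ has image $a_1 m' \bmod m$. I plan to use $x$ and its Galois translates to write down an explicit $n \times n$ integer matrix $T$ with $\det T = \pm m'$ whose first row is $(\alpha_0, \alpha_1, \ldots, \alpha_{n-2}, 0)$ and whose remaining rows are the corresponding rows for $\zeta_n x, \zeta_n^2 x, \ldots$ reduced modulo $\zeta_n^n = 1$; the coprimality of $(a_1, \alpha_0, \ldots, \alpha_{n-2})$ together with $N_{\mathbb{Q}(\zeta_n)/\mathbb{Q}}(x) = m'$ being the matching cofactor ensures that the rows of $T$, combined with $(m, 0, \ldots, 0)$, generate $M$. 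In the new $\mathbb{Z}$-basis of $M$ produced by $T$ the $C_n$-action becomes block-triangular: one direction is $\sigma_2$-fixed (coming from the trace in $\mathbb{Z}[\zeta_n]$), while the remaining $n-1$ directions carry a $\mathbb{Z}[\zeta_n]/(x)$-module structure that, after tensoring with $K$ and using $\zeta_n \in K$, decomposes into $K$-lines on which $\sigma_2$ acts by powers of $\zeta_n$. A repeated application of Theorem \ref{thm:AHK} then strips off one variable at a time, with Theorem \ref{thm:abel_case} handling the terminal diagonal piece, yielding the rationality of $K(M)^{C_n}$ and hence of $K(G)$. The main obstacle is the explicit construction and analysis of $T$: verifying that its determinant really equals $\pm m'$, that $T$ together with $(m, 0, \ldots, 0)$ genuinely spans $M$, and that the resulting cascade of AHK steps never gets stuck on a non-monomial action at some intermediate stage. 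This is exactly the point where the norm condition and the coprimality assumption on the $\alpha_i$ and $a_1$ must be used in full.
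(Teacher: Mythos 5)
Your proposal does not prove the statement it is supposed to prove. The statement in question is Theorem \ref{thm:AHK}, a general fact about a finite group acting on a one-variable rational function field $L(x)$ by affine transformations $\sigma(x) = a_\sigma x + b_\sigma$: the claim is that the fixed field $L(x)^G$ equals $L^G(f)$ for any invariant polynomial $f$ of minimal degree. What you have written instead is an outline of the proof of the paper's Main Theorem on $K(C_m \rtimes C_n)$ --- the reduction to $K(V)^G$ via Corollary \ref{cor:redu_cor}, the passage to a $\mathbb{Z}C_n$-lattice $M$, and the use of the norm condition $N_{\mathbb{Q}(\zeta_n)/\mathbb{Q}}(x) = m'$. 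Theorem \ref{thm:AHK} appears in your text only as a tool invoked at the end (``a repeated application of Theorem \ref{thm:AHK} then strips off one variable at a time''), which is circular as a proof of that theorem. Nothing in your argument addresses the actual content: why a minimal-degree element of $L[x]^G$ exists and is nonconstant, why the degree of an invariant polynomial is preserved under the affine action (this is where $a_\sigma \neq 0$ enters), and why $[L(x):L^G(f)]$ matches $[L(x):L(x)^G]$ so that the inclusion $L^G(f) \subseteq L(x)^G$ is an equality.

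Note also that the paper itself offers no proof of this result --- it is quoted from \cite{Ahmad2000} as a preliminary --- so the expected content of a blind proof would be a self-contained argument along the lines just sketched (a degree/L\"uroth-type computation for the subfield generated by a minimal-degree invariant), not a rehearsal of how the theorem is later applied. As it stands, the proposal is a plan for a different theorem and leaves the stated one entirely unproved.
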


Monomial actions are crucial in solving rationality problem for linear group actions.
A $K$-automorphism $\sigma$ is said to be a monomial automorphism if
\begin{displaymath}
   \sigma \cdot x_j = c_j(\sigma)\prod_{i} x_i^{a_{ij}},
\end{displaymath}
where $(a_{ij})_{1 \leq i,j \leq n} \in GL_n(\mathbb{Z})$ and $c_j(\sigma) \in K \setminus \{ 0 \}$.
If $c_j(\sigma) = 1$ for all $j$ and for all $\sigma \in G$, the action is said to be purely monomial.

It is known that if $L$ is a rational function field of two or three variables over $K$ 
and $G$ is a finite group acting on $L$ by monomial $K$-automorphisms, then the fixed field $L^{G}$ is rational over $K$ \cite{Hajja1983, Hajja1987, Hajja1992, Hajja1994, Hoshi2008}.


Now we list some preliminary lemmas, the proofs are straightforward and omitted.

We first introduce Laplace Expansion. 
Let $S_1, \ldots, S_r$ and $T_1, \ldots, T_r$ be ordered partitions of $[n]$
such that $S_j, T_j$ contain the same number of elements for each $j = 1, \ldots, r$. Write
\begin{displaymath}
   S_j = \{ \alpha_1^{(j)}, \alpha_2^{(j)}, \ldots, \alpha_{k_j}^{(j)} \}^{o}
   \quad \text{ and } \quad
   T_j = \{ \beta_1^{(j)}, \beta_2^{(j)}, \ldots, \beta_{k_j}^{(j)} \}^{o},
\end{displaymath}
then the permutation
$
   \begin{pmatrix}
      S_1 & S_2 & \cdots & S_r \\
      T_1 & T_2 & \cdots & T_r
   \end{pmatrix}
$
is defined to be
\begin{displaymath}
   \begin{pmatrix}
      S_1 & S_2 & \cdots & S_r \\
      T_1 & T_2 & \cdots & T_r
   \end{pmatrix} = {
      \begin{pmatrix}
         \alpha_1^{(1)} & \cdots & \alpha_{k_1}^{(1)} & \cdots & \alpha_{1}^{(r)} & \cdots & \alpha_{k_r}^{(r)} \\
         \beta_1^{(1)} & \cdots & \beta_{k_1}^{(1)} & \cdots & \beta_{1}^{(r)} & \cdots & \beta_{k_r}^{(r)}
      \end{pmatrix}
   } \in \sym_n,
\end{displaymath}
where $\sym_n$ is the symmetric group of order $n$.

\begin{thm}[{Laplace Expansion, \cite[pp.~416-417]{Jacobson2009}}] \label{thm:complete_laplace}
   Let $\mathcal{A}$ be an $n \times n$ matrix over a commutative ring $R$.
   Suppose $\{ S_1, \ldots, S_r \}$ is an ordered partition of $[n]$, then
   \begin{displaymath}
      \det \mathcal{A} = {
         \sum_{\{ T_1, \ldots, T_r \}} {
            \sgn {
               \begin{pmatrix}
                  S_1 & S_2 & \cdots & S_r \\
                  T_1 & T_2 & \cdots & T_r
               \end{pmatrix}
            }
            \mathcal{A}_{S_1,T_1} \cdots \mathcal{A}_{S_r,T_r}
         }
      },
   \end{displaymath}
   where $\{ T_1, \ldots, T_r \}$ runs through all possible ordered partitions of $[n]$ with $|T_j| = |S_j|$ for all $j$.
\end{thm}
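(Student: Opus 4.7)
The plan is to follow the roadmap sketched after the Main Theorem's statement: exhibit a faithful representation of $G$, reduce the rationality of $K(G)$ to a Noether-Saltman problem on a $\mathbb{Z}C_n$-lattice $M$, and then exploit the norm hypothesis together with Laplace expansion (Theorem~\ref{thm:complete_laplace}) to produce a new basis of $M$ from which $K(M)^{C_n}$ is shown rational by iterated application of Theorem~\ref{thm:AHK}.

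First I would construct the representation. The case $r\equiv 1\pmod m$ renders $G$ abelian and is handled by Theorem~\ref{thm:abel_case}, so assume $r\not\equiv 1\pmod m$. Since $\zeta_m\in K$, take the character $\chi:C_m\to K^{\times}$ with $\chi(\sigma_1)=\zeta_m$ and form $V=\mathrm{Ind}_{C_m}^{G}\chi$; Mackey's criterion (together with the fact that $r$ has order exactly $n$ modulo $m$ when $r\not\equiv 1$) shows $V$ is a faithful irreducible $K$-representation of $G$. With the basis $v_i=\sigma_2^i\otimes 1$ for $0\le i\le n-1$, one has $\sigma_1\cdot v_i=\zeta_m^{r^i}v_i$ and $\sigma_2\cdot v_i=v_{i+1\bmod n}$, so Corollary~\ref{cor:redu_cor} reduces the theorem to proving $K(V)^{G}$ rational over $K$. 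Writing $K(V)=K(y_0,\ldots,y_{n-1})$, the diagonal action of $\sigma_1$ forces a Laurent monomial $\prod_iy_i^{b_i}$ to be $C_m$-invariant exactly when $\sum_ib_ir^i\equiv 0\pmod m$. Hence $K(V)^{C_m}=K(M)$ with $M=\{(b_0,\ldots,b_{n-1})\in\mathbb{Z}^n:\sum_ib_ir^i\equiv 0\pmod m\}$, a full-rank sublattice of index $m$ in $\mathbb{Z}^n$ carrying the cyclic-permutation $C_n$-action; the theorem reduces to proving $K(M)^{C_n}$ rational over $K$.

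The heart of the argument is to construct an explicit integer matrix $\mathcal{A}\in M_{n\times n}(\mathbb{Z})$ whose columns form a new $\mathbb{Z}$-basis of $M$ on which the $C_n$-action becomes a monomial action of triangular shape. The natural construction places cyclic $C_n$-shifts of a vector built from $(\alpha_0,\alpha_1,\ldots,\alpha_{n-2},0)$ in the first $n-1$ columns and supplies a last ``correction'' column (a suitable multiple of $(1,1,\ldots,1)$ or of $m\cdot e_1$) that absorbs the factor $\gcd(m,r-1)$. Verifying $|\det\mathcal{A}|=m=[\mathbb{Z}^n:M]$, which forces $\mathcal{A}$ to be a genuine basis, is the technical core: expanding $\det\mathcal{A}$ by Theorem~\ref{thm:complete_laplace} along an appropriate partition of the rows yields a sum whose minors are cyclic evaluations of the polynomial $x(X)=\sum\alpha_iX^i$, and the identity $\prod_{k=1}^{n-1}x(\zeta_n^k)=N_{\mathbb{Q}(\zeta_n)/\mathbb{Q}}(x)=m'$ together with the integer relation $x(r)=a_1m'$ should collapse the sum to $\pm m$; coprimality of $(a_1,\alpha_{n-2},\ldots,\alpha_0)$ then guarantees $\mathcal{A}$ is primitive. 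I expect this Laplace computation to be the principal obstacle, since it requires transferring a cyclotomic identity over $\mathbb{Z}[\zeta_n]$ into a multilinear identity among integer minors while tracking signs through the partition permutation. Once $\mathcal{A}$ is in place, the base change $z_j=\prod_iy_i^{\mathcal{A}_{ij}}$ turns the $C_n$-action into a triangular monomial action, and Theorem~\ref{thm:AHK} applied inductively (peeling off $z_n$, then $z_{n-1}$, and so on) produces the required $n$ algebraically independent invariants generating $K(M)^{C_n}$.

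The ``in particular'' clause follows upon setting $m=p$: in the non-abelian case $r\not\equiv 1\pmod p$, so $\gcd(p,r-1)=1$ and $m'=p$. Any $x=\sum\alpha_i\zeta_q^i\in\mathbb{Z}[\zeta_q]$ with $N(x)=p$ automatically satisfies $\gcd(\alpha_0,\ldots,\alpha_{q-2})=1$ (else $p^{q-1}\mid N(x)$), and $x$ generates a prime ideal of residue degree one above $p$, corresponding to some embedding $\zeta_q\mapsto r'\in\mathbb{F}_p$; since every primitive $q$-th root of unity mod $p$ yields an isomorphic group $C_p\rtimes_rC_q$, we may take $r=r'$ to obtain $p\mid x(r)=\sum\alpha_ir^i$, supplying $a_1$ and meeting the hypotheses of the first part. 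For the ``moreover'' clause, when $q<23$ the ring $\mathbb{Z}[\zeta_q]$ is a UFD by Masley-Montgomery, hence a PID; non-abelian $C_p\rtimes C_q$ requires $q\mid p-1$, so $p$ splits completely in $\mathbb{Z}[\zeta_q]$ and every prime above $p$ is principal with a generator of norm $p$, while the abelian case reduces to Theorem~\ref{thm:abel_case}.
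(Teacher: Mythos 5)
Your proposal does not address the statement at hand. The statement you were asked to prove is Theorem~\ref{thm:complete_laplace}, the classical Laplace expansion of a determinant along an ordered partition $\{S_1,\ldots,S_r\}$ of the rows: a purely combinatorial/linear-algebraic identity expressing $\det\mathcal{A}$ as a signed sum of products of complementary minors $\mathcal{A}_{S_1,T_1}\cdots\mathcal{A}_{S_r,T_r}$ over all ordered partitions $\{T_1,\ldots,T_r\}$ of the columns with $|T_j|=|S_j|$. What you have written instead is a proof sketch of the paper's Main Theorem on the rationality of $K(C_m\rtimes C_n)$ — constructing an induced representation, reducing to a $\mathbb{Z}C_n$-lattice, invoking the norm hypothesis, and so on. None of that bears on the determinant identity; the Laplace expansion appears in your sketch only as a tool you \emph{invoke}, not as something you prove.

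The paper itself gives no proof of this theorem: it is cited from Jacobson (pp.~416--417) and explicitly grouped among the preliminary results whose "proofs are straightforward and omitted." If you wish to supply one, the standard route is to check that the right-hand side is an alternating multilinear function of the columns of $\mathcal{A}$ agreeing with $\det$ on the identity matrix, or to expand $\det\mathcal{A}=\sum_{\pi\in\sym_n}\sgn(\pi)\prod_i a_{i\pi(i)}$ and group the permutations $\pi$ according to the partition $\{T_j\}=\{\pi(S_j)\}$ they induce, verifying that the sign of $\pi$ factors as $\sgn\begin{pmatrix}S_1&\cdots&S_r\\T_1&\cdots&T_r\end{pmatrix}$ times the product of the signs internal to each block. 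As it stands, your submission is a (reasonable-looking) outline of an entirely different result and cannot be accepted as a proof of the stated theorem.
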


\begin{lemma} \label{lemma:det_of_mul_of_irregular_mtx}
   Let $R$ be a commutative ring.
   Suppose $m \leq n$, $\mathcal{A} \in M_{m \times n}(R)$ and $\mathcal{B} \in M_{n \times m}(R)$.
   Let $T = [m]$, then
   \begin{displaymath}
      \det \mathcal{AB} = {
         \sum_{
            \substack {
               S \subseteq [n] \\
               |S| = m
            }
         } \mathcal{A}_{T,S} \mathcal{B}_{S,T}
      }.
   \end{displaymath}
\end{lemma}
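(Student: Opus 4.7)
The identity is the classical Cauchy--Binet formula. The plan is to derive it directly from the multilinearity and alternating properties of the determinant, rather than by applying the Laplace expansion stated just above, since this is cleaner and avoids tracking signs attached to a large block matrix.

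First, I would expand $\det(\mathcal{AB})$ column by column. Writing $\mathfrak{a}_k$ for the $k$-th column of $\mathcal{A}$ and $b_{k,j}$ for the entries of $\mathcal{B}$, the $j$-th column of $\mathcal{AB}$ equals $\sum_{k=1}^{n} b_{k,j}\,\mathfrak{a}_k$. Multilinearity of the determinant in the columns gives
\begin{displaymath}
   \det(\mathcal{AB}) = \sum_{(k_1, \ldots, k_m) \in [n]^m} \left(\prod_{j=1}^m b_{k_j, j}\right) \det[\mathfrak{a}_{k_1} \mid \cdots \mid \mathfrak{a}_{k_m}].
\end{displaymath}

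Next, since $\det[\mathfrak{a}_{k_1} \mid \cdots \mid \mathfrak{a}_{k_m}]$ vanishes whenever two of the $k_i$ coincide, only tuples with pairwise distinct entries contribute. I would group these surviving tuples according to their underlying set $S = \{s_1, \ldots, s_m\}^o \subseteq [n]$: each such tuple has the form $(s_{\sigma(1)}, \ldots, s_{\sigma(m)})$ for a unique $\sigma \in \sym_m$, and the alternating property of the determinant gives $\det[\mathfrak{a}_{s_{\sigma(1)}} \mid \cdots \mid \mathfrak{a}_{s_{\sigma(m)}}] = \sgn(\sigma)\,\mathcal{A}_{T,S}$.

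Finally, I would recognize the resulting inner sum
\begin{displaymath}
   \sum_{\sigma \in \sym_m} \sgn(\sigma) \prod_{j=1}^m b_{s_{\sigma(j)}, j} = \mathcal{B}_{S,T}
\end{displaymath}
as the Leibniz expansion of the $m \times m$ minor $\mathcal{B}_{S,T}$. Combining these observations yields exactly $\det(\mathcal{AB}) = \sum_{S} \mathcal{A}_{T,S}\,\mathcal{B}_{S,T}$ where $S$ ranges over $m$-element subsets of $[n]$. Every step is valid over an arbitrary commutative ring, so I do not anticipate a real obstacle; the only care needed is the bookkeeping when passing from unordered subsets back to ordered tuples and tracking the sign contributed by $\sigma$.
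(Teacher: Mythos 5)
Your argument is correct. It is the standard multilinearity proof of the Cauchy--Binet formula: expand $\det(\mathcal{AB})$ over tuples $(k_1,\ldots,k_m)\in[n]^m$ using linearity in each column, discard the tuples with a repeated index (the determinant of a matrix with two equal columns vanishes over any commutative ring), group the surviving tuples by their underlying $m$-subset $S$, and recognize the signed inner sum $\sum_{\sigma\in\sym_m}\sgn(\sigma)\prod_j b_{s_{\sigma(j)},j}$ as the Leibniz expansion of $\mathcal{B}_{S,T}$. All steps are valid over an arbitrary commutative ring, and the bookkeeping you flag (passing between ordered tuples and the ordered subset $S=\{s_1,\ldots,s_m\}^o$, with the sign $\sgn(\sigma)$ absorbed into the minor of $\mathcal{A}$) is handled correctly. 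The paper itself omits the proof, stating only that it is straightforward, so there is nothing to compare against line by line; its placement of the lemma immediately after the Laplace expansion suggests the authors had in mind a derivation via a block-matrix or Laplace-expansion argument, but your direct route is at least as clean, avoids the sign-tracking you mention, and proves exactly the stated identity.
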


\begin{lemma} \label{lemma:mul_of_submtx_del_1st_row_col}
   Let $R$ be a commutative ring, $\mathcal{P} \in M_n(R)$ and $\mathcal{Q} = \adj \mathcal{P}$. Write
   \begin{displaymath}
      \mathcal{P} = [ \mathfrak{p}_{1}, \mathfrak{p}_{2}, \ldots, \mathfrak{p}_{n} ], \quad
      \mathcal{Q} = {
         \left[ \begin{smallmatrix}
            \mathfrak{q}^{1} \\ \vdots \\ \mathfrak{q}^{n}
         \end{smallmatrix} \right]
      }.
   \end{displaymath}
   Let $\mathcal{P}_{(1)}$ be the submatrix of $\mathcal{P}$ obtained by deleting the first column and
   $\mathcal{Q}^{(1)}$ be the submatrix of $\mathcal{Q}$ obtained by deleting the first row. Then
   \begin{displaymath}
      \mathcal{P}_{(1)}\mathcal{Q}^{(1)} = (\det P)I_{n} - \mathfrak{p}_{1}\mathfrak{q}^{1}.
   \end{displaymath}
\end{lemma}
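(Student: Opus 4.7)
The plan is to derive the identity as an immediate consequence of the defining property of the adjugate, $\mathcal{P}\mathcal{Q} = (\det \mathcal{P})\, I_n$, together with a single block-matrix observation. No expansion of individual minors and no appeal to the Laplace theorem of the preceding lemma should be needed; the statement is, in essence, just a repackaging of the adjugate relation after isolating the contribution of one column-row pair.

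First I would partition $\mathcal{P}$ horizontally as $\mathcal{P} = [\,\mathfrak{p}_{1} \mid \mathcal{P}_{(1)}\,]$, where the first block is the $n \times 1$ column $\mathfrak{p}_{1}$ and the second is the $n \times (n-1)$ matrix $\mathcal{P}_{(1)}$. Dually, I would partition $\mathcal{Q}$ vertically as $\mathcal{Q} = \left[\begin{smallmatrix} \mathfrak{q}^{1} \\ \mathcal{Q}^{(1)} \end{smallmatrix}\right]$, with first block the $1 \times n$ row $\mathfrak{q}^{1}$ and second block the $(n-1) \times n$ matrix $\mathcal{Q}^{(1)}$. The block sizes match (the $1$ on one side pairs with the $1$ on the other, and the $n-1$'s likewise), so block multiplication gives
$$\mathcal{P}\mathcal{Q} \;=\; \mathfrak{p}_{1}\,\mathfrak{q}^{1} \;+\; \mathcal{P}_{(1)}\,\mathcal{Q}^{(1)}.$$

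Substituting the adjugate identity $\mathcal{P}\mathcal{Q} = (\det \mathcal{P})\, I_n$ and rearranging then yields the desired equality $\mathcal{P}_{(1)}\mathcal{Q}^{(1)} = (\det \mathcal{P})\, I_n - \mathfrak{p}_{1}\mathfrak{q}^{1}$. There is no real obstacle here — the lemma is a bookkeeping statement about how the product $\mathcal{P}\mathcal{Q}$ splits once one separates off the rank-one outer product coming from the first column of $\mathcal{P}$ paired with the first row of $\mathcal{Q}$. The only thing worth checking carefully is the orientation of the two partitions (column-wise for $\mathcal{P}$, row-wise for $\mathcal{Q}$), so that the first sub-blocks multiply to the outer product $\mathfrak{p}_{1}\mathfrak{q}^{1}$ and the second sub-blocks to $\mathcal{P}_{(1)}\mathcal{Q}^{(1)}$, rather than being reversed.
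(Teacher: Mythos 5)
Your proof is correct. The paper omits the proof of this lemma as ``straightforward,'' and your argument --- writing $\mathcal{P}\mathcal{Q}$ as the sum of outer products $\sum_k \mathfrak{p}_k\mathfrak{q}^k$ via the block partition and invoking $\mathcal{P}\,\adj\mathcal{P} = (\det\mathcal{P})I_n$ --- is precisely the intended straightforward argument, valid over any commutative ring.
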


\begin{lemma} \label{lemma:minor_after_row_op}
   Let $R$ be a commutative ring. Let
   \begin{displaymath}
      \mathcal{A} = {
         \left[
         \begin{smallmatrix}
            -a_n & & & & a_1 \\
            & -a_n & & & a_2 \\
            & & \ddots & & \vdots \\
            & & & -a_n & a_{n-1}
         \end{smallmatrix}
         \right]
      } \in M_{(n-1) \times n}(R), 
      \quad
      \mathcal{B} = {
         \left[
         \begin{smallmatrix}
            b_{11} & b_{12} & \ldots & b_{1, n-1} \\
            b_{21} & b_{22} & \ldots & b_{2, n-1} \\
            \vdots & \vdots & & \vdots \\
            b_{n1} & b_{n2} & \ldots & b_{n, n-1} \\
         \end{smallmatrix}
         \right]
      } \in M_{n \times (n-1)}(R)
   \end{displaymath}
   and
   \begin{displaymath}
      \mathcal{C} = {
         \left[
         \begin{smallmatrix}
            a_1 & b_{11} & \ldots & b_{1, n-1} \\
            a_2 & b_{21} & \ldots & b_{2, n-1} \\
            \vdots & \vdots & & \vdots \\
            a_n & b_{n1} & \ldots & b_{n, n-1} \\
         \end{smallmatrix}
         \right]
      } \in M_{n \times n}(R).
   \end{displaymath}
   Then $\det \mathcal{A}\mathcal{B} = a_n^{n-2}\det \mathcal{C}$.
\end{lemma}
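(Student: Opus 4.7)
The plan is to apply Lemma~\ref{lemma:det_of_mul_of_irregular_mtx} (Cauchy--Binet) to $\det \mathcal{A}\mathcal{B}$ and then rearrange the result into a cofactor expansion of $\det \mathcal{C}$ along its first column. Since $\mathcal{A}$ is $(n-1)\times n$ and $\mathcal{B}$ is $n\times (n-1)$, taking $T=[n-1]$ in the lemma gives
\begin{displaymath}
   \det \mathcal{A}\mathcal{B}
   = \sum_{k=1}^{n} \mathcal{A}_{[n-1],\,[n]\setminus\{k\}} \cdot \mathcal{B}_{[n]\setminus\{k\},\,[n-1]},
\end{displaymath}
so the task reduces to evaluating each of the $n$ minors of $\mathcal{A}$ and then factoring out the promised $a_n^{n-2}$.

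Next I would exploit the extremely sparse shape of $\mathcal{A}$: its first $n-1$ columns are $-a_n\,e_1,\dots,-a_n\,e_{n-1}$ and its last column is $(a_1,\dots,a_{n-1})^T$. For $k=n$, deleting the last column leaves the diagonal matrix $-a_n I_{n-1}$, whose determinant is $(-1)^{n-1}a_n\cdot a_n^{n-2}$. For $1\le k\le n-1$, deleting column $k$ makes the $k$-th row equal to $(0,\dots,0,a_k)$; expanding along that row and noting that the complementary $(n-2)\times(n-2)$ block is again diagonal with entries $-a_n$, I obtain $(-1)^{k+1}a_k\cdot a_n^{n-2}$ after combining the expansion sign with $(-a_n)^{n-2}$. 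Thus uniformly for $1\le k\le n$,
\begin{displaymath}
   \mathcal{A}_{[n-1],\,[n]\setminus\{k\}} = (-1)^{k+1} a_k \cdot a_n^{n-2}.
\end{displaymath}

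Substituting and pulling out $a_n^{n-2}$ yields
\begin{displaymath}
   \det \mathcal{A}\mathcal{B} = a_n^{n-2} \sum_{k=1}^{n} (-1)^{k+1} a_k\, \mathcal{B}_{[n]\setminus\{k\},\,[n-1]}.
\end{displaymath}
To finish, I would observe that deleting row $k$ and column $1$ from $\mathcal{C}$ produces exactly the submatrix $\mathcal{B}_{[n]\setminus\{k\},\,[n-1]}$, so the remaining sum is the cofactor expansion of $\det \mathcal{C}$ along its first column $(a_1,\dots,a_n)^T$.

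The computation is routine and works over any commutative ring, with no hypothesis on the $a_k$ or $b_{ij}$; the only place demanding care is the sign bookkeeping in the middle step, where the expansion sign $(-1)^{k+(n-1)}$ must combine with $(-a_n)^{n-2}$ to produce the clean factor $(-1)^{k+1}$ that matches the cofactor sign in the first-column expansion of $\mathcal{C}$.
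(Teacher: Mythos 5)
Your proof is correct: the Cauchy--Binet expansion via Lemma~\ref{lemma:det_of_mul_of_irregular_mtx}, the evaluation $\mathcal{A}_{[n-1],[n]\setminus\{k\}}=(-1)^{k+1}a_k a_n^{n-2}$, and the identification of the resulting sum with the first-column cofactor expansion of $\det\mathcal{C}$ all check out, including the sign bookkeeping $(-1)^{k+(n-1)}(-1)^{n-2}=(-1)^{k+1}$. The paper omits the proof of this lemma as ``straightforward,'' but your argument uses exactly the tool (Theorem~\ref{thm:complete_laplace}/Lemma~\ref{lemma:det_of_mul_of_irregular_mtx}) the authors state immediately beforehand, so it is evidently the intended route.
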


\begin{prop} \label{prop:det_compound_matrix}
   Let $\mathcal{A}=(a_{ij}) \in M_n(R)$ and $\mathcal{A}_{ij}$ denote the minor obtained by deleting $i$th row and $j$th column of $\mathcal{A}$.
   Let $\{ i_1, i_2, \ldots, i_{n-2} \}^o$ and $\{ j_1, j_2, \ldots, j_{n-2} \}^o$ be subsets of $[n]$ such that
   \begin{align*}
      &\{ 1, \ldots, n \} \backslash \{ i_1, \ldots, i_{n-2} \} = \{ k_1, k_2 \}^o, \\
      &\{ 1, \ldots, n \} \backslash \{ j_1, \ldots, j_{n-2} \} = \{ l_1, l_2 \}^o.
   \end{align*}
   Then
   \begin{displaymath}
      {
         \left|
         \begin{smallmatrix}
            \mathcal{A}_{i_1j_1} & \mathcal{A}_{i_1j_2} & \cdots & \mathcal{A}_{i_1j_{n-2}} \\
            \mathcal{A}_{i_2j_1} & \mathcal{A}_{i_2j_2} & \cdots & \mathcal{A}_{i_2j_{n-2}} \\
            \vdots & \vdots & & \vdots \\
            \mathcal{A}_{i_{n-2}j_1} & \mathcal{A}_{i_{n-2}j_2} & \cdots & \mathcal{A}_{i_{n-2}j_{n-2}} \\
         \end{smallmatrix}
         \right|
      } =
      {
         (\det \mathcal{A})^{n-3}
         \left|
         \begin{smallmatrix}
            a_{k_1l_1} & a_{k_1l_2} \\
            a_{k_2l_1} & a_{k_2l_2} \\
         \end{smallmatrix}
         \right|.
      }
   \end{displaymath}
\end{prop}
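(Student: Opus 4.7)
This is Jacobi's classical identity relating minors of $\mathcal{A}$ to minors of $\adj\mathcal{A}$, and my plan is to deduce it by passing through the adjugate. Let $\mathcal{B} = \adj \mathcal{A}$, so the entries satisfy $\mathcal{B}_{ji} = (-1)^{i+j}\mathcal{A}_{ij}$. Write $I = \{i_1, \ldots, i_{n-2}\}$, $J = \{j_1, \ldots, j_{n-2}\}$, and $\Sigma X = \sum_{x \in X} x$. I would first rewrite the determinant on the left: substituting $\mathcal{A}_{i_s j_t} = (-1)^{i_s + j_t}\mathcal{B}_{j_t i_s}$, then pulling the factor $(-1)^{i_s}$ out of row $s$, $(-1)^{j_t}$ out of column $t$, and taking a transpose, I obtain
\[
\det\bigl(\mathcal{A}_{i_s j_t}\bigr)_{s,t} \;=\; (-1)^{\Sigma I+\Sigma J}\,\mathcal{B}_{J, I},
\]
where $\mathcal{B}_{J, I}$ denotes the $(n-2)\times(n-2)$ minor of $\mathcal{B}$ on rows $J$ and columns $I$.

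It then suffices to establish Jacobi's identity
\[
\mathcal{B}_{J, I} \;=\; (-1)^{\Sigma I + \Sigma J}\,(\det \mathcal{A})^{n-3}\,\mathcal{A}_{I^c, J^c},
\]
since the two signs cancel and the right-hand side reduces to $(\det \mathcal{A})^{n-3}\mathcal{A}_{\{k_1, k_2\}, \{l_1, l_2\}}$ of the proposition. To prove this, I would first reduce to the canonical case $I = J = [n-2]$ by simultaneous row and column permutations of $\mathcal{A}$; each transposition changes $\det\mathcal{A}$, the minor $\mathcal{A}_{I^c, J^c}$, and the relevant submatrix of $\mathcal{B}$ by signs whose total product is exactly $(-1)^{\Sigma I + \Sigma J}$. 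In the canonical case, write the block decomposition $\mathcal{A} = \bigl(\begin{smallmatrix}\mathcal{P} & \mathcal{Q} \\ \mathcal{R} & \mathcal{S}\end{smallmatrix}\bigr)$ with $\mathcal{P}$ of size $(n-2)\times(n-2)$ and $\mathcal{S}$ of size $2 \times 2$; then $\mathcal{B}_{[n-2], [n-2]}$ is the determinant of the upper-left $(n-2)\times(n-2)$ block of $\mathcal{B}$. By a standard specialization argument I may assume $\mathcal{A}$ is invertible, in which case $\mathcal{B} = (\det \mathcal{A})\mathcal{A}^{-1}$; then the block-inversion formula $(\mathcal{A}^{-1})_{\text{upper-left}} = (\mathcal{P} - \mathcal{Q}\mathcal{S}^{-1}\mathcal{R})^{-1}$ together with the Schur-complement identity $\det \mathcal{A} = \det \mathcal{S}\cdot\det(\mathcal{P} - \mathcal{Q}\mathcal{S}^{-1}\mathcal{R})$ yields $\mathcal{B}_{[n-2], [n-2]} = (\det\mathcal{A})^{n-3}\det\mathcal{S}$, as required.

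The main technical obstacle will be the careful sign bookkeeping in the reduction step of Jacobi's identity. A sign-free alternative is to view the desired formula as a polynomial identity in the entries of $\mathcal{A}$: both sides are homogeneous of the same multidegree, the factor $(\det \mathcal{A})^{n-3}$ can be extracted by a degeneration argument, and the remaining $2 \times 2$ factor is pinned down by testing on diagonal matrices. One can alternatively apply Lemma \ref{lemma:mul_of_submtx_del_1st_row_col} (which encodes $\mathcal{A}\cdot\mathcal{B} = (\det\mathcal{A}) I_n$ after deletions) together with the Cauchy--Binet-type formula of Lemma \ref{lemma:det_of_mul_of_irregular_mtx} to derive the canonical case of Jacobi's identity directly, without ever inverting $\mathcal{A}$.
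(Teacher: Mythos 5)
Your proposal is correct, but it reaches the identity by a genuinely different route than the paper. You recognize the statement as Jacobi's complementary-minor theorem for the adjugate: you convert the left-hand determinant of minors into the minor $\mathcal{B}_{J,I}$ of $\mathcal{B}=\adj\mathcal{A}$ (picking up $(-1)^{\Sigma I+\Sigma J}$), prove the canonical case $I=J=[n-2]$ from $\mathcal{B}=(\det\mathcal{A})\mathcal{A}^{-1}$ via block inversion and the Schur-complement factorization $\det\mathcal{A}=\det\mathcal{S}\cdot\det(\mathcal{P}-\mathcal{Q}\mathcal{S}^{-1}\mathcal{R})$, and transport to general index sets by permutation conjugation; the sign computations check out (e.g.\ for $n=3$ both sides collapse correctly), and your exponent $(\det\mathcal{A})^{n-2}/\det(\mathcal{P}-\mathcal{Q}\mathcal{S}^{-1}\mathcal{R})=(\det\mathcal{A})^{n-3}\det\mathcal{S}$ is right. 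The paper instead builds two explicit $n\times n$ matrices $\tilde{\mathcal{U}},\tilde{\mathcal{V}}$ (one bordered by cofactor rows, one by signed columns of $\tilde{\mathcal{A}}$) whose product is block lower-triangular, and equates $\det\tilde{\mathcal{U}}\cdot\det\tilde{\mathcal{V}}$ with $\det(\tilde{\mathcal{U}}\tilde{\mathcal{V}})$ before cancelling; this sidesteps the permutation-reduction sign bookkeeping that you correctly flag as the fiddliest part of your argument, since the signs there appear only as explicit factors $(-1)^{(n-l_2)+(n-1-l_1)}$ etc.\ that cancel at the end. Both proofs close the argument identically: a generic/Zariski-density step over $\mathbb{Z}[x_{ij}]$ to remove the invertibility hypotheses, followed by specialization $\mathbb{Z}[x_{ij}]\to R$ — so your "standard specialization argument" is exactly the paper's Step 2. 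Your approach buys conceptual clarity (the result is named and modular), at the cost of either carrying out the transposition sign count or falling back on one of your sketched alternatives; the paper's buys a self-contained, sign-explicit computation tailored to the exact index sets needed later in Proposition \ref{prop:minor_of_mtx_conjugation}.
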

\begin{proof}
\setcounter{step}{0}
\begin{step}
   Consider first that $R = \mathbb{Z}[x_{ij}]$, where $x_{ij}$ are variables for $1 \leq i, j \leq n$.
   Put $\tilde{\mathcal{A}} = (x_{ij})$ and
   \begin{displaymath}
      \tilde{\mathcal{U}} = {
         \left[
         \begin{smallmatrix}
            \tilde{\mathcal{A}}_{i_1,1} & \cdots & \tilde{\mathcal{A}}_{i_1,l_1} & \cdots & \tilde{\mathcal{A}}_{i_1,l_2} & \cdots & \tilde{\mathcal{A}}_{i_1,n} \\
            \tilde{\mathcal{A}}_{i_2,1} & \cdots & \tilde{\mathcal{A}}_{i_2,l_1} & \cdots & \tilde{\mathcal{A}}_{i_2,l_2} & \cdots & \tilde{\mathcal{A}}_{i_2,n} \\
            \vdots & & \vdots & & \vdots & & \vdots \\
            \tilde{\mathcal{A}}_{i_{n-2},1} & \cdots & \tilde{\mathcal{A}}_{i_{n-2},l_1} & \cdots & \tilde{\mathcal{A}}_{i_{n-2},l_2} & \cdots & \tilde{\mathcal{A}}_{i_{n-2},n} \\
            0 & \cdots & x_{k_1,l_1} & \cdots & x_{k_1,l_2} & \cdots & 0 \\
            0 & \cdots & x_{k_2,l_1} & \cdots & x_{k_2,l_2} & \cdots & 0 \\
         \end{smallmatrix}
         \right]
      }
   \end{displaymath}
   and
   \begin{displaymath}
      \tilde{\mathcal{V}} = {
         \left[
         \begin{smallmatrix}
            (-1)^{i_1+1}x_{i_1,1} & \cdots & (-1)^{i_{n-2}+1}x_{i_{n-2},1} & (-1)^{k_1+1}x_{k_1,1} & (-1)^{k_2+1}x_{k_2,1} \\
            (-1)^{i_1+2}x_{i_1,2} & \cdots & (-1)^{i_{n-2}+2}x_{i_{n-2},2} & (-1)^{k_1+2}x_{k_1,2} & (-1)^{k_2+2}x_{k_2,2} \\
            \vdots & & \vdots & \vdots & \vdots \\
            (-1)^{i_1+n}x_{i_1,n} & \cdots & (-1)^{i_{n-2}+n}x_{i_{n-2},n} & (-1)^{k_1+n}x_{k_1,n} & (-1)^{k_2+n}x_{k_2,n} \\
         \end{smallmatrix}
         \right]
      }.
   \end{displaymath}
   We get
   \begin{equation} \label{eq:matrix_uv}
      \tilde{\mathcal{U}}\tilde{\mathcal{V}} = {
         \left[
            \begin{array}{ccc:c}
               \det\tilde{\mathcal{A}} & & & \\
               & \ddots & & \text{\Large 0} \\
               & & \det\tilde{\mathcal{A}} & \\ \hdashline
               & \text{\Large *} & & {
                  \left[
	                  \begin{smallmatrix}
	                     x_{k_1,l_1} & x_{k_1,l_2} \\
	                     x_{k_2,l_1} & x_{k_2,l_2}
	                  \end{smallmatrix}
                  \right]
                  \cdot
                  \left[
	                  \begin{smallmatrix}
	                     (-1)^{k_1+l_1}x_{k_1,l_1} & (-1)^{k_2+l_1}x_{k_2,l_1} \\
	                     (-1)^{k_1+l_2}x_{k_1,l_2} & (-1)^{k_2+l_2}x_{k_2,l_2}
	                  \end{smallmatrix}
                  \right]
               }
            \end{array}
         \right]
      }.
   \end{equation}
   Consider their determinants, we have
   \begin{displaymath}
      \det\tilde{\mathcal{U}} = (-1)^{(n-l_2)+(n-1-l_1)} {
         \left|
         \begin{smallmatrix}
            x_{k_1,l_1} & x_{k_1,l_2} \\
            x_{k_2,l_1} & x_{k_2,l_2} \\
         \end{smallmatrix}
         \right|
         \cdot
         \left|
         \begin{smallmatrix}
            \tilde{\mathcal{A}}_{i_1j_1} & \tilde{\mathcal{A}}_{i_1j_2} & \cdots & \tilde{\mathcal{A}}_{i_1j_{n-2}} \\
            \tilde{\mathcal{A}}_{i_2j_1} & \tilde{\mathcal{A}}_{i_2j_2} & \cdots & \tilde{\mathcal{A}}_{i_2j_{n-2}} \\
            \vdots & \vdots & & \vdots \\
            \tilde{\mathcal{A}}_{i_{n-2}j_1} & \tilde{\mathcal{A}}_{i_{n-2}j_2} & \cdots & \tilde{\mathcal{A}}_{i_{n-2}j_{n-2}} \\
         \end{smallmatrix}
         \right|
      },
   \end{displaymath}
   \begin{align*}
      \det\tilde{\mathcal{V}} &= (-1)^{(n-k_2)+(n-1-k_1)} {
         \det ((-1)^{i+j}x_{ij})
      } \\
      &= {
         (-1)^{(n-k_2)+(n-1-k_1)}\det\tilde{\mathcal{A}}
      }
   \end{align*}
   and from \eqref{eq:matrix_uv}, we have
   \begin{displaymath}
      \det\tilde{\mathcal{U}}\tilde{\mathcal{V}} = {
         (-1)^{k_1+k_2+l_1+l_2}(\det\tilde{\mathcal{A}})^{n-2} {
            \left|
            \begin{smallmatrix}
               x_{k_1,l_1} & x_{k_1,l_2} \\
               x_{k_2,l_1} & x_{k_2,l_2} \\
            \end{smallmatrix}
            \right|^{2}
         }
      }.
   \end{displaymath}

   Combine results above, we obtain
   \begin{displaymath}
      (\det\tilde{\mathcal{A}})^{n-2} {
         \left|
         \begin{smallmatrix}
            x_{k_1,l_1} & x_{k_1,l_2} \\
            x_{k_2,l_1} & x_{k_2,l_2} \\
         \end{smallmatrix}
         \right|^{2}
      }
      = {
         (\det\tilde{\mathcal{A}}) {
            \left|
            \begin{smallmatrix}
               x_{k_1,l_1} & x_{k_1,l_2} \\
               x_{k_2,l_1} & x_{k_2,l_2} \\
            \end{smallmatrix}
            \right|
            \cdot
            \left|
            \begin{smallmatrix}
               \tilde{\mathcal{A}}_{i_1j_1} & \tilde{\mathcal{A}}_{i_1j_2} & \cdots & \tilde{\mathcal{A}}_{i_1j_{n-2}} \\
               \tilde{\mathcal{A}}_{i_2j_1} & \tilde{\mathcal{A}}_{i_2j_2} & \cdots & \tilde{\mathcal{A}}_{i_2j_{n-2}} \\
               \vdots & \vdots & & \vdots \\
               \tilde{\mathcal{A}}_{i_{n-2}j_1} & \tilde{\mathcal{A}}_{i_{n-2}j_2} & \cdots & \tilde{\mathcal{A}}_{i_{n-2}j_{n-2}} \\
            \end{smallmatrix}
            \right|
         }
      }.
   \end{displaymath}
   If $\det \tilde{\mathcal{A}} \not= 0$ and
   $
      \left|
      \begin{smallmatrix}
         x_{k_1,l_1} & x_{k_1,l_2} \\
         x_{k_2,l_1} & x_{k_2,l_2} \\
      \end{smallmatrix}
      \right|
      \not= 0
   $, then the statement holds for $R = \mathbb{Z}[x_{ij}]$.
\end{step}
\begin{step}
   Define the polynomial $g \in \mathbb{Z}[x_{ij}]$ by
   \begin{displaymath}
      g = {
         {
            (\det \tilde{\mathcal{A}})^{n-3}
            \left|
            \begin{smallmatrix}
               x_{k_1l_1} & x_{k_1l_2} \\
               x_{k_2l_1} & x_{k_2l_2} \\
            \end{smallmatrix}
            \right|
         }
         -
         {
            \left|
            \begin{smallmatrix}
               \tilde{\mathcal{A}}_{i_1j_1} & \tilde{\mathcal{A}}_{i_1j_2} & \cdots & \tilde{\mathcal{A}}_{i_1j_{n-2}} \\
               \tilde{\mathcal{A}}_{i_2j_1} & \tilde{\mathcal{A}}_{i_2j_2} & \cdots & \tilde{\mathcal{A}}_{i_2j_{n-2}} \\
               \vdots & \vdots & & \vdots \\
               \tilde{\mathcal{A}}_{i_{n-2}j_1} & \tilde{\mathcal{A}}_{i_{n-2}j_2} & \cdots & \tilde{\mathcal{A}}_{i_{n-2}j_{n-2}} \\
            \end{smallmatrix}
            \right|
         }
      }.
   \end{displaymath}
   Then by Step 1, $g(x_{ij}) = 0$ if $\det \tilde{\mathcal{A}} \not= 0$ and
   $x_{k_1,l_1}x_{k_2,l_2} - x_{k_1,l_2}x_{k_2,l_1} \not= 0$.
   Hence we have $g(\mathfrak{a}) = 0$ for those $\mathfrak{a} \in \mathbb{C}^{n^{2}}$ which do not belong to the union of varieties $V(\det\tilde{\mathcal{A}}) \cup V(x_{k_1,l_1}x_{k_2,l_2} - x_{k_1,l_2}x_{k_2,l_1})$.

   Note that polynomial functions are continuous and all open sets are dense in Zariski topological space $\mathbb{C}^{n^{2}}$. Hence $g = 0$ on open set
   $
      V(\det\tilde{\mathcal{A}})^{c}
      \cap
      V(x_{k_1,l_1}x_{k_2,l_2} - x_{k_1,l_2}x_{k_2,l_1})^{c}
   $, which implies $g = 0$ on the whole space $\mathbb{C}^{n^{2}}$.

   For general commutative ring $R$, we have a well-defined ring homomorphism
   $\phi : \mathbb{Z}[x_{ij}] \rightarrow R$ such that $x_{ij} \mapsto a_{ij}$.
   Apply $\phi$ to $g$, the statement follows.
\end{step}
\end{proof}


\begin{defn} \label{def:wedge}
   Given an $n \times (n-1)$ matrix $\Omega$, define
   \begin{displaymath}
      \wedge^{n-1} \Omega = {
         \langle
            (-1)^{n-1}\omega_1, (-1)^{n-2}\omega_2, \ldots, (-1)\omega_{n-1}, \omega_n
         \rangle
      }, 
   \end{displaymath}
   where $\omega_i$ is the minor of $\Omega$ by deleting the $i$\textnormal{th} row.
\end{defn}

\begin{prop} \label{prop:minor_of_mtx_conjugation}
   Let $R$ be a commutative ring and $n \geq 3$.
   Suppose $\mathcal{A} = (a_{ij}) \in M_n(R)$ and $\mathcal{P} = (p_{ij}) \in SL_n(R)$.
   Define $\tilde{\mathcal{B}} = (b_{ij}) = \mathcal{P}^{-1}\mathcal{A}\mathcal{P}$ and put
   \begin{align*}
      \mathfrak{p} := \mathfrak{p}_{1} = {
         \left[
         \begin{smallmatrix}
            p_{11} \\ p_{21} \\ \vdots \\ p_{n1}
         \end{smallmatrix}
         \right]
      }, \indent
      \mathfrak{q} = {
         \left[
         \begin{smallmatrix}
            b_{21} \\ \vdots \\ b_{n1}
         \end{smallmatrix}
         \right]
      } \text{ and }
      \mathcal{B} = {
         \left[
         \begin{smallmatrix}
            b_{22} & \cdots & b_{2n} \\
            \vdots & & \vdots \\
            b_{n2} & \cdots & b_{nn}
         \end{smallmatrix}
         \right]
      }.
   \end{align*}
   If 
   \begin{align*}
      &\left\langle {
         u_1, u_2, \ldots, u_{n}
      } \right\rangle
      = {
         \wedge^{n-1}[\mathfrak{p}, \mathcal{A}\mathfrak{p}, \mathcal{A}^{2}\mathfrak{p}, \ldots, \mathcal{A}^{n-2}\mathfrak{p}]
      }, \\
      &\left\langle {
         w_1, w_2, \ldots, w_{n-1}
      } \right\rangle
      = {
         \wedge^{n-2}[\mathfrak{q}, \mathcal{B}\mathfrak{q}, \mathcal{B}^{2}\mathfrak{q}, \ldots, \mathcal{B}^{n-3}\mathfrak{q}]
      }, 
   \end{align*}
   then
   \begin{displaymath}
      {
         \left|
         \begin{smallmatrix}
            0 & u_1 & \cdots & u_{n} \\
            p_{11} & a_{11} & \cdots & a_{1n} \\
            \vdots & \vdots & & \vdots \\
            p_{n1} & a_{n1} & \cdots & a_{nn}
         \end{smallmatrix}
         \right|_{(n+1) \times (n+1)}
      }
      = -{
         \left|
         \begin{smallmatrix}
            0 & w_1 & \cdots & w_{n-1} \\
            b_{21} & b_{22} & \cdots & b_{2n} \\
            \vdots & \vdots & & \vdots \\
            b_{n1} & b_{n2} & \cdots & b_{nn}
         \end{smallmatrix}
         \right|_{n \times n}
      }.
   \end{displaymath}
\end{prop}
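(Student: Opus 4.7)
The plan is to use the conjugation $\tilde{\mathcal{B}} = \mathcal{P}^{-1}\mathcal{A}\mathcal{P}$ to reduce the $(n+1)\times(n+1)$ determinant on the left to an $n\times n$ one involving $\tilde{\mathcal{B}}$. Conjugating the left-hand matrix by $\mathrm{diag}(1,\mathcal{P})$ preserves its determinant (since $\det\mathcal{P}=1$), and, using $\mathcal{P}^{-1}\mathfrak{p}=e_{1}$, produces a matrix whose $(1,1)$ entry is $0$, whose top row (beyond the $0$) is $v^{T}:=(\mathcal{P}^{T}u)^{T}$, whose first column (below the $0$) is $e_{1}$, and whose bottom-right block is $\tilde{\mathcal{B}}$. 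Expanding this along the first column, the LHS equals $-\det M'$, where $M'$ is the $n\times n$ matrix with top row $v^{T}$ and remaining rows forming $[\mathfrak{q}\mid\mathcal{B}]$. Thus the proposition reduces to proving $v=(0,w_{1},\ldots,w_{n-1})^{T}$.

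To analyze $v$, I would invoke the characterization $(\wedge^{n-1}\Omega)^{T}c=\det[\Omega\mid c]$ of $\wedge^{n-1}\Omega$, valid for any $n\times(n-1)$ matrix $\Omega$ and any column $c\in R^{n}$. Substituting $c\mapsto\mathcal{P}d$ and using $\det\mathcal{P}=1$ yields the transformation rule $\wedge^{n-1}(\mathcal{P}^{-1}\Omega)=\mathcal{P}^{T}\wedge^{n-1}\Omega$. Applying this with $\Omega=[\mathfrak{p},\mathcal{A}\mathfrak{p},\ldots,\mathcal{A}^{n-2}\mathfrak{p}]$ together with $\mathcal{P}^{-1}\mathcal{A}^{k}\mathfrak{p}=\tilde{\mathcal{B}}^{k}e_{1}$ gives $v=\wedge^{n-1}\Omega''$, where $\Omega''=[e_{1},\tilde{\mathcal{B}}e_{1},\ldots,\tilde{\mathcal{B}}^{n-2}e_{1}]$.

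The final step is to reduce $\Omega''$ to a form in which the columns $\mathcal{B}^{k}\mathfrak{q}$ appear directly. Writing $\tilde{\mathcal{B}}^{k}e_{1}$ as a top component $x_{k}$ stacked over a bottom $(n-1)$-vector $y_{k}$, the recursion $y_{k+1}=x_{k}\mathfrak{q}+\mathcal{B}y_{k}$ shows inductively that $y_{k}=\mathcal{B}^{k-1}\mathfrak{q}+(\text{lower-order combination of the }\mathcal{B}^{i}\mathfrak{q})$. Hence a sequence of unipotent column operations on columns $2,\ldots,n-1$ (which have total determinant $1$ and therefore preserve $\wedge^{n-1}$) reduces the bottom $(n-1)\times(n-1)$ block of $\Omega''$ to exactly $[\mathfrak{q},\mathcal{B}\mathfrak{q},\ldots,\mathcal{B}^{n-3}\mathfrak{q}]$. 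In this reduced matrix, the minor obtained by deleting the first row has a zero first column, so $v_{1}=0$; for $i\geq 2$, the minor obtained by deleting the $i$-th row has first column $(1,0,\ldots,0)^{T}$, and expanding along it gives $(-1)^{n-i}$ times the minor of $[\mathfrak{q},\mathcal{B}\mathfrak{q},\ldots,\mathcal{B}^{n-3}\mathfrak{q}]$ with row $i-1$ deleted. Since the sign $(-1)^{(n-1)-(i-1)}=(-1)^{n-i}$ is precisely the one built into the definition of $w_{i-1}$, one gets $v_{i}=w_{i-1}$, as required. The main obstacle is the careful sign bookkeeping between the two definitions of $\wedge$ and the various cofactor expansions; the algebra itself is straightforward once the conjugation trick is set up.
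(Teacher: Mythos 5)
Your proposal is correct, and although it pivots on the same key identity as the paper's own proof --- namely $\langle u_1,\ldots,u_n\rangle\,\mathcal{P}=\langle 0,w_1,\ldots,w_{n-1}\rangle$ --- you establish both that identity and the final determinant equality by a genuinely different and substantially cleaner route. Where the paper's Step 2 passes from the vector identity to the determinant identity by introducing the auxiliary matrix $\mathcal{S}$, invoking Lemma \ref{lemma:minor_after_row_op}, and specializing to $R=\mathbb{C}$ with $p_{n1}\neq 0$ via a Zariski-density argument, your conjugation by $\left[\begin{smallmatrix}1&\\&\mathcal{P}\end{smallmatrix}\right]$ followed by a cofactor expansion along the column $\mathcal{P}^{-1}\mathfrak{p}=\mathfrak{e}_1$ achieves the same reduction in two lines, uniformly over any commutative ring. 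Where the paper's Step 1 computes each component of $\wedge^{n-1}[\mathfrak{p},\mathcal{A}\mathfrak{p},\ldots,\mathcal{A}^{n-2}\mathfrak{p}]\cdot\mathcal{P}$ and of $\wedge^{n-2}[\mathfrak{q},\mathcal{B}\mathfrak{q},\ldots,\mathcal{B}^{n-3}\mathfrak{q}]$ separately --- via Laplace expansion, the adjugate identity of Lemma \ref{lemma:mul_of_submtx_del_1st_row_col}, Lemma \ref{lemma:det_of_mul_of_irregular_mtx}, and the compound-matrix Proposition \ref{prop:det_compound_matrix} --- and matches them term by term, you derive the covariance rule $\wedge^{n-1}(\mathcal{P}^{-1}\Omega)=\mathcal{P}^{T}\,\wedge^{n-1}\Omega$ directly from the defining property $(\wedge^{n-1}\Omega)\cdot c=\det[\Omega\mid c]$, use $\mathcal{P}^{-1}\mathcal{A}^{k}\mathfrak{p}=\tilde{\mathcal{B}}^{k}\mathfrak{e}_1$, and finish with unipotent column operations justified by the recursion $y_{k+1}=x_k\mathfrak{q}+\mathcal{B}y_k$ (valid since $y_0=0$, so the change of basis is upper unitriangular and preserves every maximal minor). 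I checked your sign bookkeeping against Definition \ref{def:wedge}: the $i$th component of $\wedge^{m-1}$ of an $m\times(m-1)$ matrix is $(-1)^{m-i}$ times the minor deleting row $i$, so your $v_i=(-1)^{n-i}\omega'_{i-1}=(-1)^{(n-1)-(i-1)}\omega'_{i-1}=w_{i-1}$ for $i\geq 2$ and $v_1=0$ are exact. Your approach buys validity over an arbitrary commutative ring with no specialization step, renders Lemmas \ref{lemma:det_of_mul_of_irregular_mtx}, \ref{lemma:mul_of_submtx_del_1st_row_col}, \ref{lemma:minor_after_row_op} and Proposition \ref{prop:det_compound_matrix} unnecessary for this result, and exposes the structural reason the identity holds (covariance of $\wedge^{n-1}$ under $SL_n$); the paper's argument is a direct computation that obtains the same conclusion at the cost of considerably more machinery.
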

\begin{proof}
   \setcounter{step}{0}
   \begin{step}
      We shall prove that
      \begin{displaymath} \label{eq:wedge_equality}
         \langle
            0, {
               \wedge^{n-2}[\mathfrak{q}, \mathcal{B}\mathfrak{q}, \mathcal{B}^{2}\mathfrak{q}, \ldots, \mathcal{B}^{n-3}\mathfrak{q}]
            }
         \rangle
         = {
            \wedge^{n-1}[\mathfrak{p}_{1}, \mathcal{A}\mathfrak{p}_{1}, \mathcal{A}^{2}\mathfrak{p}_{1}, \ldots, \mathcal{A}^{n-2}\mathfrak{p}_{1}]
	         \cdot \mathcal{P}
         }.
      \end{displaymath}
      
      We first show that the first component of 
      $
         \wedge^{n-1}[\mathfrak{p}_{1}, \mathcal{A}\mathfrak{p}_{1}, \mathcal{A}^{2}\mathfrak{p}_{1}, \ldots, \mathcal{A}^{n-2}\mathfrak{p}_{1}]
	         \cdot \mathcal{P}
      $ is zero.
      
      By definition, we have
      \begin{align*}
         {
            \wedge^{n-1} [\mathfrak{p}_{1}, \mathcal{A}\mathfrak{p}_{1}, \ldots, \mathcal{A}^{n-2}\mathfrak{p}_{1}]\cdot\mathfrak{b}
         }
         = {
            \left|[\mathfrak{p}_{1}, \mathcal{A}\mathfrak{p}_{1}, \ldots, \mathcal{A}^{n-2}\mathfrak{p}_{1}, \mathfrak{b}]\right|
         }
      \end{align*}
      for all $\mathfrak{b} = \langle b_1, \ldots, b_n \rangle^{t}$.
      Hence, the $i$th component of $\wedge^{n-1} [\mathfrak{p}_{1}, \mathcal{A}\mathfrak{p}_{1}, \ldots, \mathcal{A}^{n-2}\mathfrak{p}_{1}]\cdot \mathcal{P}$ is the determinant
      \begin{equation} \label{eq:(i+1)th_comp_of_wedge_unordered}
         \left|[\mathfrak{p}_{1}, \mathcal{A}\mathfrak{p}_{1}, \ldots, \mathcal{A}^{n-2}\mathfrak{p}_{1}, \mathfrak{p}_{i}]\right|, 
      \end{equation}
      which implies that the first component is zero.
      
      Therefore, it remains to show that the $i$th component of 
      $\wedge^{n-2}[\mathfrak{q}, \mathcal{B}\mathfrak{q}, \mathcal{B}^{2}\mathfrak{q}, \ldots, \mathcal{B}^{n-3}\mathfrak{q}]$
      is equal to the $(i+1)$th component of 
      $
         \wedge^{n-1}[\mathfrak{p}_{1}, \mathcal{A}\mathfrak{p}_{1}, \mathcal{A}^{2}\mathfrak{p}_{1}, \ldots, \mathcal{A}^{n-2}\mathfrak{p}_{1}]
         \cdot \mathcal{P}
	   $.
      
      By \  doing \  proper \  column \  operations \  on \  \eqref{eq:(i+1)th_comp_of_wedge_unordered}, \ 
the \  $(i+1)$th \ component \  of \\
$\wedge^{n-1} [\mathfrak{p}_{1}, \mathcal{A}\mathfrak{p}_{1}, \ldots, \mathcal{A}^{n-2}\mathfrak{p}_{1}]\cdot \mathcal{P}$ becomes
      \begin{equation} \label{eq:(i+1)th_comp_of_wedge_ordered}
         (-1)^{n-i+1}
         \left|
            [\mathfrak{p}_{1}, \mathcal{A}\mathfrak{p}_{1}, \ldots, \mathcal{A}^{i-1}\mathfrak{p}_{1}, \mathfrak{p}_{i+1}, \mathcal{A}^{i}\mathfrak{p}_{1}, \ldots, \mathcal{A}^{n-2}\mathfrak{p}_{1}]
         \right|.
      \end{equation}
      
      Let
      $
         \mathcal{V} = {
            [ 
               \mathcal{A}\mathfrak{p}_{1}, 
               \mathcal{A}^{2}\mathfrak{p}_{1}, 
               \ldots, 
               \mathcal{A}^{n-2}\mathfrak{p}_{1}
            ]
         } \in M_{n \times (n-2)}(R)
      $.
      Apply Laplace Expansion (Theorem \ref{thm:complete_laplace}) 
      to \eqref{eq:(i+1)th_comp_of_wedge_ordered} by taking
      $T' = \{ 1, i+1 \}$ and $T = [n] \backslash T'$, 
      then the $(i+1)$th component of $\wedge^{n-1} [\mathfrak{p}_{1}, \mathcal{A}\mathfrak{p}_{1}, \ldots, \mathcal{A}^{n-2}\mathfrak{p}_{1}]\cdot \mathcal{P}$ becomes
      \begin{align} \label{eq:(i+1)th_comp_of_wedge_in_rhs}
         &\quad (-1)^{n-i+1}\left|
            [\mathfrak{p}_{1}, \mathcal{A}\mathfrak{p}_{1}, \ldots, \mathcal{A}^{i-1}\mathfrak{p}_{1}, \mathfrak{p}_{i+1}, \mathcal{A}^{i}\mathfrak{p}_{1}, \ldots, \mathcal{A}^{n-2}\mathfrak{p}_{1}]
         \right| \nonumber \\
         &= {
            (-1)^{n-i+1}
            \sum_{
               \substack {
                  S = [n] \backslash \{ k, l \}^{o} \\
                  S' = \{ k, l \}^{o}
               }
            } {
               \sgn {
                  \begin{pmatrix}
                     S & S' \\
                     T & T'
                  \end{pmatrix}
               }
               \begin{vmatrix}
                  p_{k1} & p_{k,i+1} \\
                  p_{l1} & p_{l,i+1}
               \end{vmatrix}
               \mathcal{V}_{S,\bar{T}}
            }
         } \nonumber \\
         &= {
            (-1)^{n-i+1}
            \sum_{
               \substack {
                  S = [n] \backslash \{ k, l \}^{o} \\
                  S' = \{ k, l \}^{o}
               }
            } {
               (-1)^{k+l+i}
               \begin{vmatrix}
                  p_{k1} & p_{k,i+1} \\
                  p_{l1} & p_{l,i+1}
               \end{vmatrix}
               \mathcal{V}_{S,\bar{T}}
            }
         }, 
      \end{align}
      where $\bar{T} = [n - 2]$.
      
      On the other hand, consider the $i$th component of 
      $\wedge^{n-2}[\mathfrak{q}, \mathcal{B}\mathfrak{q}, \mathcal{B}^{2}\mathfrak{q}, \ldots, \mathcal{B}^{n-3}\mathfrak{q}]$.
      Write $\mathcal{Q} = \mathcal{P}^{-1} = \adj \mathcal{P}$.
      Let $\mathcal{Q}^{(1)}$ denote the submatrix of $\mathcal{Q}$ by deleting the first row and
      $\mathcal{P}_{(1)}$ denote the submatrix of $\mathcal{P}$ by deleting the first column, we have
      \begin{displaymath}
         \mathcal{B}^{i}\mathfrak{q}
         = (\mathcal{Q}^{(1)}\mathcal{A}\mathcal{P}_{(1)})^{i} (\mathcal{Q}^{(1)}\mathcal{A}\mathfrak{p}_1)
         = \mathcal{Q}^{(1)} (\mathcal{A}\mathcal{P}_{(1)}\mathcal{Q}^{(1)})^{i} \mathcal{A}\mathfrak{p}_1.
      \end{displaymath}
      Let $\mathcal{Q}^{(1,i+1)}$ denote the submatrix of $\mathcal{Q}$ by deleting the first and $(i+1)$th rows,
      then the $i$th component of
      $\wedge^{n-2} [ \mathfrak{q}, \mathcal{B}\mathfrak{q}, \ldots, \mathcal{B}^{n-3}\mathfrak{q} ]$
      is
      \begin{equation} \label{eq:ith_comp_of_wedge}
         (-1)^{n-i+1}
         \left|[
            \mathcal{Q}^{(1,i+1)} \mathcal{A}\mathfrak{p}_1,
            \mathcal{Q}^{(1,i+1)} (\mathcal{A}\mathcal{P}_{(1)}\mathcal{Q}^{(1)}) \mathcal{A}\mathfrak{p}_1,
            \ldots,
            \mathcal{Q}^{(1,i+1)} (\mathcal{A}\mathcal{P}_{(1)}\mathcal{Q}^{(1)})^{n-3} \mathcal{A}\mathfrak{p}_1
         ]\right|.
      \end{equation}
      The equation above can actually be simplified to
      \begin{equation} \label{eq:ith_comp_of_wedge_simplified}
         (-1)^{n-i+1}
         \left|[
            \mathcal{Q}^{(1,i+1)} \mathcal{A}\mathfrak{p}_1,
            \mathcal{Q}^{(1,i+1)} \mathcal{A}^{2}\mathfrak{p}_1,
            \ldots,
            \mathcal{Q}^{(1,i+1)} \mathcal{A}^{n-2}\mathfrak{p}_1
         ]\right|.
      \end{equation}
      Indeed, by Lemma \ref{lemma:mul_of_submtx_del_1st_row_col}, we have
      \begin{equation} \label{eq:ith_comp_of_wedge_original}
         \mathcal{Q}^{(1,i+1)} (\mathcal{A}\mathcal{P}_{(1)}\mathcal{Q}^{(1)})^{j} \mathcal{A}\mathfrak{p}_1
         = {
            \mathcal{Q}^{(1,i+1)} \{ \mathcal{A}(\det \mathcal{P} \cdot I_n - \mathfrak{p}_1 \mathfrak{q}^{1}) \}^{j} \mathcal{A}\mathfrak{p}_1
         }.
      \end{equation}
      Expand it completely, then every monomial is of the following two types
      \begin{enumerate}[label=\textnormal{(\alph*)}, nolistsep]
         \item {
            $\mathcal{Q}^{(1,i+1)} \mathcal{A}^{j+1} \mathfrak{p}_1$, or
         }
         \item {
            $\mathcal{Q}^{(1,i+1)} \mathcal{A}^{k}(\mathcal{A} \mathfrak{p}_1 \mathfrak{q}^{1}) \cdots \mathcal{A} \mathfrak{p}_1$
            for some $0 \leq k < j$.
         }
      \end{enumerate}
      Since $\mathfrak{q}^{1} \cdots \mathcal{A} \mathfrak{p}_1$ is a constant,
      the later is actually of the form
      $c_k \mathcal{Q}^{(1,i+1)} \mathcal{A}^{k+1} \mathfrak{p}_1$ for some $c_k \in R$.
      Thus \eqref{eq:ith_comp_of_wedge_original} is equal to
      \begin{equation} \label{eq:ith_comp_of_wedge_expand}
         \sum_{k=0}^{j-1} c_k \mathcal{Q}^{(1,i+1)} \mathcal{A}^{k+1} \mathfrak{p}_1 +
         \mathcal{Q}^{(1,i+1)} \mathcal{A}^{j+1} \mathfrak{p}_1.
      \end{equation}
      Substitute \eqref{eq:ith_comp_of_wedge_expand} into \eqref{eq:ith_comp_of_wedge} and by linearity of determinant, we get \eqref{eq:ith_comp_of_wedge_simplified}.
      
      By now, it remains to show that \eqref{eq:ith_comp_of_wedge_simplified} and \eqref{eq:(i+1)th_comp_of_wedge_in_rhs} are equal.
      
      Recall that
      $
         \mathcal{V} = {
            [ 
               \mathcal{A}\mathfrak{p}_{1}, 
               \mathcal{A}^{2}\mathfrak{p}_{1}, 
               \ldots, 
               \mathcal{A}^{n-2}\mathfrak{p}_{1}
            ]
         }
      $.
      If we put
      $
         \mathcal{U} = {
            [ 
               \mathfrak{q}^{2}, \mathfrak{q}^{3}, \cdots, 
               \hat{\mathfrak{q}}^{i+1}, \cdots, \mathfrak{q}^{n}
            ]^{t}
         }
      $, then \eqref{eq:ith_comp_of_wedge_simplified} is equal to $(-1)^{n-i+1}\det\mathcal{U}\mathcal{V}$. 
      By Lemma \ref{lemma:det_of_mul_of_irregular_mtx}, we have
      \begin{displaymath}
         \det \mathcal{U}\mathcal{V} = {
            \sum_{|S| = n-2} {
               \mathcal{U}_{\bar{T},S}\mathcal{V}_{S,\bar{T}}
            }
         },
      \end{displaymath}
      where $\bar{T} = [n-2]$.
      Note that $\mathcal{Q} = \adj \mathcal{P}$, the entries of $\mathcal{U}$ can be written explicitly:
      \begin{displaymath}
         \mathcal{U} = {
            \left[
            \begin{smallmatrix}
               -\mathcal{P}_{12} & \mathcal{P}_{22} & \cdots & (-1)^{n+2}\mathcal{P}_{n2} \\
               \vdots & \vdots & & \vdots \\
               \widehat{\mathcal{P}}_{1,i+1} & \widehat{\mathcal{P}}_{2,i+1} & \cdots & \widehat{\mathcal{P}}_{n,i+1} \\
               \vdots & \vdots & & \vdots \\
               (-1)^{n+1}\mathcal{P}_{1n} & (-1)^{n+2}\mathcal{P}_{2n} & \cdots & \mathcal{P}_{nn}
            \end{smallmatrix}
            \right]
         }.
      \end{displaymath}
      And if $S = [n] \backslash \{ k, l \}^{o}$, we can apply Proposition \ref{prop:det_compound_matrix} to get
      \begin{displaymath}
         \mathcal{U}_{\bar{T},S} = {
            (-1)^{k+l+i}
            \begin{vmatrix}
               p_{k1} & p_{l1} \\
               p_{k,i+1} & p_{l,i+1}
            \end{vmatrix}
         }.
      \end{displaymath}
      Hence we obtain
      \begin{align*}
         &\quad (-1)^{n-i+1}
         \left|[
            \mathcal{Q}^{(1,i+1)} \mathcal{A}\mathfrak{p}_1,
            \mathcal{Q}^{(1,i+1)} \mathcal{A}^{2}\mathfrak{p}_1,
            \ldots,
            \mathcal{Q}^{(1,i+1)} \mathcal{A}^{n-2}\mathfrak{p}_1
         ]\right| \\
         &= {
            (-1)^{n-i+1}\det\mathcal{U}\mathcal{V}
         } \\
         &= {
            (-1)^{n-i+1}
	         \sum_{|S| = n-2} {
	            \mathcal{U}_{\bar{T},S}\mathcal{V}_{S,\bar{T}}
            }
         } \\
         &= {
            (-1)^{n-i+1}
            \sum_{
               \substack {
                  S = [n] \backslash \{ k, l \}^{o} \\
                  S' = \{ k, l \}^{o}
               }
            } {
               (-1)^{k+l+i}
               \begin{vmatrix}
                  p_{k1} & p_{k,i+1} \\
                  p_{l1} & p_{l,i+1}
               \end{vmatrix}
               \mathcal{V}_{S,\bar{T}}
            }
         },
      \end{align*}
      which concludes that \eqref{eq:(i+1)th_comp_of_wedge_in_rhs} and \eqref{eq:ith_comp_of_wedge_simplified} are equal.
      Thus the $(i+1)$th component of 
      $
         \wedge^{n-1}[\mathfrak{p}_{1}, \mathcal{A}\mathfrak{p}_{1}, \mathcal{A}^{2}\mathfrak{p}_{1}, \ldots, \mathcal{A}^{n-2}\mathfrak{p}_{1}]
         \cdot \mathcal{P}
	   $
      is equal to the $i$th component of $\wedge^{n-2} [ \mathfrak{q}, \mathcal{B}\mathfrak{q}, \ldots, \mathcal{B}^{n-3}\mathfrak{q} ]$.
      The proof is completed.
   \end{step}
   \begin{step}
      We shall show that
      \begin{displaymath}
         {
            \left| \begin{smallmatrix}
               0 & u_1 & \cdots & u_{n} \\
               p_{11} & a_{11} & \cdots & a_{1n} \\
               \vdots & \vdots & & \vdots \\
               p_{n1} & a_{n1} & \cdots & a_{nn}
            \end{smallmatrix} \right|
         } = -{
            \left| \begin{smallmatrix}
               0 & w_1 & \cdots & w_{n-1} \\
               b_{21} & b_{22} & \cdots & b_{2n} \\
               \vdots & \vdots & & \vdots \\
               b_{n1} & b_{n2} & \cdots & b_{nn}
            \end{smallmatrix} \right|
         }.
      \end{displaymath}

      By the arguments similar to those in Step 2 of Proposition \ref{prop:det_compound_matrix},
      we may assume $R = \mathbb{C}$ and $p_{n1} \not= 0$.
      Denote $i$\textnormal{th} row of $\mathcal{Q}$ by $\mathfrak{q}^{i}$ and
      $j$\textnormal{th} column of $\mathcal{P}$ by $\mathfrak{p}_{j}$,
      then by Step 1 we have
      \begin{equation} \label{eq:def_of_D}
         \mathcal{D} := {
            \left[ \begin{smallmatrix}
               0 & w_1 & \cdots & w_{n-1} \\
               b_{21} & b_{22} & \cdots & b_{2n} \\
               \vdots & \vdots & & \vdots \\
               b_{n1} & b_{n2} & \cdots & b_{nn}
            \end{smallmatrix} \right]
         }
         = {
            \left[ \begin{smallmatrix}
               { \langle u_1, \ldots, u_n \rangle }\mathcal{P} \\
               \mathfrak{q}^{2}\mathcal{A}\mathcal{P} \\
               \vdots \\
               \mathfrak{q}^{n}\mathcal{A}\mathcal{P}
            \end{smallmatrix} \right]
         }
         = {
            \left[ \begin{smallmatrix}
               { \langle u_1, \ldots, u_n \rangle } \\
               \mathcal{Q}^{(1)}\mathcal{A}
            \end{smallmatrix} \right]\mathcal{P}
         },
      \end{equation}
      where $\mathcal{Q}^{(1)}$ is the submatrix of $\mathcal{Q}$ by deleting the first row.

      Put
      $
         \mathcal{S} = {
            \left[ \begin{smallmatrix}
               -p_{n1} & & & & p_{11} \\
               & -p_{n1} & & & p_{21} \\
               & & \ddots & & \vdots \\
               & & & -p_{n1} & p_{n-1,1} \\
            \end{smallmatrix} \right]
         }
      $ and $\mathcal{R} = \mathcal{S}\mathcal{P}$.
      Let $\mathcal{P}_{(1)}, \mathcal{R}_{(1)}$ be submatrices obtained by deleting the first column of $\mathcal{P}$ and $\mathcal{R}$, respectively.
      Then we get $\mathcal{R}_{(1)}=\mathcal{S}\mathcal{P}_{(1)}$.

      Note that the first column of $\mathcal{R}$ is zero, hence we have
      \begin{equation} \label{eq:equiv_of_dets_of_mul_of_submtxes}
         \mathcal{R}\mathcal{Q} = \mathcal{R} {
            \left[ \begin{smallmatrix}
               \mathfrak{q}^{1} \\ \vdots \\ \mathfrak{q}^{n}
            \end{smallmatrix} \right]
         }
         = \mathcal{R}_{(1)}{
            \left[ \begin{smallmatrix}
               \mathfrak{q}^{2} \\ \vdots \\ \mathfrak{q}^{n}
            \end{smallmatrix} \right]
         } = \mathcal{R}_{(1)}\mathcal{Q}^{(1)}.
      \end{equation}
      Thus from \eqref{eq:def_of_D}, \eqref{eq:equiv_of_dets_of_mul_of_submtxes} and definition of $\mathcal{R}$, we get
      \begin{equation} \label{eq:org_SA}
         {
            \left[ \begin{smallmatrix}
               1 & \\
               & \mathcal{R}_{(1)}
            \end{smallmatrix} \right] \mathcal{D}
         }
         = {
            \left[ \begin{smallmatrix}
               { \langle u_1, \ldots, u_n \rangle } \\
               \mathcal{R}_{(1)}\mathcal{Q}^{(1)}\mathcal{A}
            \end{smallmatrix} \right]\mathcal{P}
         }
         = {
            \left[ \begin{smallmatrix}
               { \langle u_1, \ldots, u_n \rangle } \\
               \mathcal{R}\mathcal{Q}\mathcal{A}
            \end{smallmatrix} \right]\mathcal{P}
         }
         = {
            \left[ \begin{smallmatrix}
               { \langle u_1, \ldots, u_n \rangle } \\
               \mathcal{S}\mathcal{A}
            \end{smallmatrix} \right]\mathcal{P}
         }.
      \end{equation}
      By Lemma \ref{lemma:minor_after_row_op} and $\det \mathcal{P} = 1$, we have 
      \begin{displaymath}
         \det \mathcal{R}_{(1)} = {
            \det \mathcal{S}\mathcal{P}_{(1)}
         } 
         = {
            p_{n1}^{n-2} \det \mathcal{P}
         }
         = p_{n1}^{n-2}, 
      \end{displaymath}
      and by \eqref{eq:org_SA} 
      \begin{displaymath}
         \det \mathcal{D} = {
            \frac{1}{p_{n1}^{n-2}}
            \begin{vmatrix}
               { \langle u_1, \ldots, u_n \rangle } \\
               \mathcal{S}\mathcal{A}
            \end{vmatrix}
         }.
      \end{displaymath}
      Expand along the first row, we obtain
      \begin{displaymath}
         \det \mathcal{D} = {
            \dfrac{1}{p_{n1}^{n-2}}
            \sum_{i=1}^{n} {
               (-1)^{i-1} u_{i} \det \mathcal{S}\mathcal{A}_{(i)}
            }
         }.
      \end{displaymath}
      Moreover, by Lemma \ref{lemma:minor_after_row_op}, we have
      \begin{displaymath}
         \det \mathcal{S}\mathcal{A}_{(i)} = p_{n1}^{n-2} {
            \left| \begin{smallmatrix}
               p_{11} & a_{11} & \cdots & \hat{a_{1i}} & \cdots & a_{1n} \\
               \vdots & \vdots & & \vdots & & \vdots \\
               p_{n1} & a_{n1} & \cdots & \hat{a_{ni}} & \cdots & a_{nn} \\
            \end{smallmatrix} \right|
         }.
      \end{displaymath}
      Hence we get
      \begin{displaymath}
         \det \mathcal{D} = -{
            \left| \begin{smallmatrix}
               0 & u_1 & \cdots & u_n \\
               p_{11} & a_{11} & \cdots & a_{1n} \\
               \vdots & \vdots & & \vdots \\
               p_{n1} & a_{n1} & \cdots & a_{nn}
            \end{smallmatrix} \right|
         }.
      \end{displaymath}
   \end{step}
\end{proof}

   \section{Main Theorem} \label{ch:noethers_problem}
In this section, we prove the following Main Theorem about rationality of $K(C_m \rtimes C_n)$.

\begin{main_thm}
	Let $m, n$ be positive integers, where $n$ is an odd prime, $K$ be a field such that neither $m$ nor $n$ is multiple of the characteristic of $K$ and both the primitive roots of unity $\zeta_m, \zeta_n$ lie in $K$.
	Let
	\begin{displaymath}
		G = C_m \rtimes_r C_n = \langle \sigma_1, \sigma_2 : \sigma_1^m = \sigma_2^n = 1, \sigma_2^{-1}\sigma_1\sigma_2 = \sigma_1^r \rangle,
	\end{displaymath}
	where $r^{n} \equiv 1 \pmod{m}$. 
	Let $m' = \frac{m}{\gcd (m, r-1)}$. 
	Suppose there exist relatively prime integers $a_1, \alpha_{n-2}, \ldots, \alpha_0 \in \mathbb{Z}$ such that
	\begin{displaymath}
		a_1m' = \alpha_{n-2} r^{n-2} + \alpha_{n-3} r^{n-3} + \cdots + \alpha_1 r + \alpha_0
	\end{displaymath}
	and
	\begin{displaymath}
		x = \alpha_{n-2}\zeta_{n}^{n-2} + \alpha_{n-3}\zeta_{n}^{n-3} + \cdots + \alpha_1\zeta_{n} + \alpha_0
		   \in \mathbb{Z}[\zeta_{n}]
	\end{displaymath}
	satisfies the norm $N_{\mathbb{Q}(\zeta_{n})/\mathbb{Q}}(x) = m'$.
	Then $K(G)$ is rational over $K$.
\end{main_thm}

The proof comprises three parts.
In \ref{sec:problem_reduction}, we reduce the group action to a monomial action with the corresponding matrix
\begin{displaymath}
   \Delta = {
      \left[ \begin{smallmatrix}
         r & & & & & -\frac{x_{n-1}}{m'} \\
         m' & & & & & -x_{n-2} \\
         & 1 & & & & -x_{n-3} \\
         & & \ddots & & & \vdots \\
         & & & 1 & 0 & -x_2 \\
         & & & 0 & 1 & -x_1
      \end{smallmatrix} \right]
   } \in GL_{n-1}(\mathbb{Z}),
\end{displaymath}
where $x_{j} = \frac{r^{j + 1} - 1}{r - 1}$ for $1 \leq j \leq n - 1$.
We then show that if $\Delta$ is conjugate to a matrix of the form
\begin{displaymath}
   \Gamma = {
      \left[ \begin{smallmatrix}
         \begin{smallmatrix}
            a & \\ 1 & \\ & 1
         \end{smallmatrix}
         & \textnormal{\Large *} \\
         0 &
         \begin{smallmatrix}
            \ddots & & \\ & 1 & b
         \end{smallmatrix}
      \end{smallmatrix} \right]
   }
\end{displaymath}
in $GL_{n-1}(\mathbb{Z})$, then $K(G)$ is rational over $K$.

In \ref{sec:algorithm}, we give an algorithm to transform $\Delta$ to a matrix of the form
\begin{displaymath}
   \left[ \begin{smallmatrix}
      \begin{smallmatrix}
         a & \\ e_1 & \\ & e_2
      \end{smallmatrix}
      & \textnormal{\Large *} \\
      0 &
      \begin{smallmatrix}
         \ddots & & \\ & e_{n-2} & b
      \end{smallmatrix}
   \end{smallmatrix} \right].
\end{displaymath}

In \ref{sec:val_of_most_important_entry}, we show that 
$e_{n-2} = \frac{N_{\mathbb{Q}(\zeta_{n})/\mathbb{Q}}(x)}{m' e_{n-3}^2 e_{n-4}^3 \ldots e_1^{n-2}}$, 
in which $x \in \mathbb{Z}[\zeta_n]$ is chosen in \ref{sec:algorithm} satisfying 
$N_{\mathbb{Q}(\zeta_n)/\mathbb{Q}}(x) = m'$, and complete the proof by deducing that $e_1 = e_2 = \cdots = e_{n-2} = 1$.

\subsection{} \label{sec:problem_reduction}
Let the group $G$ and the field $K$ be as in the Main Theorem.
A faithful representation of $G$ on $K^n$ is given by
\begin{displaymath}
   \sigma_1 \mapsto {
      \left[ \begin{smallmatrix}
         \zeta & & & & \\
         & \zeta^r & & & \\
         & & \zeta^{r^2} & & \\
         & & & \ddots & \\
         & & & & \zeta^{r^{n-1}}
      \end{smallmatrix} \right]
   },
   \sigma_2 \mapsto {
      \left[ \begin{smallmatrix}
         0 & & & & 1 \\
         1 & & & & \\
         & 1 & & & \\
         & & \ddots & & \\
         & & & 1 & 0 \\
      \end{smallmatrix} \right]
   },
\end{displaymath}
where $\zeta = \zeta_m$.
This representation induces an action of $G$ on $K(X_1, X_2, \ldots, X_n)$:
\begin{align*}
   &\sigma_1 : X_1 \mapsto \zeta X_1 \text{, } X_2 \mapsto \zeta^r X_2 \text{, } \ldots \text{, }
              X_{n-1} \mapsto \zeta^{r^{n-2}} X_{n-1} \text{, } X_n \mapsto \zeta^{r^{n-1}} X_n; \\
   &\sigma_2 : X_1 \mapsto X_2 \mapsto X_3 \mapsto \cdots \mapsto X_{n-1} \mapsto X_n \mapsto X_1.
\end{align*}
The fixed subfield is
\begin{displaymath}
   K(X_1, \ldots, X_n)^G = \{ h \in K(X_1, \ldots, X_n) : \sigma h = h, \forall \sigma \in G \}.
\end{displaymath}
By Corollary \ref{cor:redu_cor}, if $K(X_1, \ldots, X_n)^G$ is rational over $K$ then $K(G)$ is rational over $K$.

Define $Y_1 = X_1$, $Y_2 = X_2/X_1, Y_3 = X_3/X_2, \ldots, Y_n = X_n/X_{n-1}$,
then the action of $G$ on $K(Y_1, \ldots, Y_n) = K(X_1, \ldots, X_n)$ is
\begin{align*}
   &\sigma_1 : Y_1 \mapsto \zeta Y_1 \text{, } Y_2 \mapsto \zeta^{r-1} Y_2 \text{, } \ldots \text{, }
              Y_{n-1} \mapsto \zeta^{r^{n-2}-r^{n-3}} Y_{n-1} \text{, } Y_n \mapsto \zeta^{r^{n-1}-r^{n-2}} Y_n; \\
   &\sigma_2 : Y_1 \mapsto Y_1 Y_2 \text{, } Y_2 \mapsto Y_3 \mapsto Y_4 \mapsto \cdots \mapsto Y_{n} \mapsto \dfrac{1}{Y_2 Y_3 \cdots Y_n}.
\end{align*}
Note that the action is linear on $Y_1$ with coefficients in $K(Y_2, \ldots, Y_n)$, so by Theorem \ref{thm:AHK},
\begin{equation} \label{eq:reduce_of_fixed_field}
   K(Y_1, Y_2, \ldots, Y_n)^G = K(Y_2, Y_3, \ldots, Y_n)^G(Y_1'),
\end{equation}
for some $Y_1'$. 
Let $m' = \frac{m}{\gcd(m, r-1)}$ and define
\begin{displaymath}
   Z_1 = Y_2^{m'}, Z_2 = Y_3/Y_2^r, Z_3 = Y_4/Y_3^r, \ldots, Z_{n-1} = Y_n/Y_{n-1}^r.
\end{displaymath}
Then $K(Y_2, Y_3, \ldots, Y_n)^{\langle \sigma_1 \rangle} = K(Z_1, Z_2, \ldots, Z_{n-1})$ and
\begin{equation} \label{eq:action_of_sigma_2_on_Z}
   \sigma_2 : Z_1 \mapsto Z_1^r Z_2^{m'}, Z_2 \mapsto Z_3 \mapsto Z_4 \mapsto \cdots \mapsto Z_{n-1} \mapsto
   \dfrac{1}{Z_1^{\frac{r^n-1}{m'(r-1)}} Z_2^{\frac{r^{n-1}-1}{r-1}} \cdots Z_{n-1}^{\frac{r^2-1}{r-1}}}.
\end{equation}

This action is a purely monomial action with the corresponding matrix $\Delta$,
\begin{displaymath}
   \Delta = {
      \left[ \begin{smallmatrix}
         r & & & & & -\frac{x_{n-1}}{m'} \\
         m' & & & & & -x_{n-2} \\
         & 1 & & & & -x_{n-3} \\
         & & \ddots & & & \vdots \\
         & & & 1 & 0 & -x_2 \\
         & & & 0 & 1 & -x_1
      \end{smallmatrix} \right]
   } \in GL_{n-1}(\mathbb{Z}), 
\end{displaymath}
where
\begin{equation} \label{def:x_j}
   x_j = \frac{r^{j+1} - 1}{r - 1}
\end{equation}
for $1 \leq j \leq n-1$.
It's worth to note that $\Delta^{n} = I_{n-1}$ and $\det \Delta = (-1)^{n-1}$.

\begin{lemma} \label{cor:similarity_cor}
   If $\Delta$ is conjugate to
   \begin{displaymath}
      \Gamma =
      \left[ \begin{smallmatrix}
         \begin{smallmatrix}
            a & \\ 1 & \\ & 1
         \end{smallmatrix}
         & \textnormal{\Large *} \\
         0 &
         \begin{smallmatrix}
            \ddots & & \\ & 1 & b
         \end{smallmatrix}
      \end{smallmatrix} \right]
   \end{displaymath}
   in $GL_{n-1}(\mathbb{Z})$, then $K(G)$ is rational over $K$.
\end{lemma}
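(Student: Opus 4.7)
The plan is to reduce the problem in three stages: first by translating the $GL_{n-1}(\mathbb{Z})$-conjugation $\Delta = P\Gamma P^{-1}$ into a monomial change of variables, then by reparametrizing to a cyclic-permutation action subject to a single multiplicative relation, and finally by establishing rationality of the fixed field of the resulting $\mathbb{Z}[\zeta_n]$-lattice action.

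First I set $W_j := \prod_{i=1}^{n-1} Z_i^{p_{ij}}$ where $P = (p_{ij}) \in GL_{n-1}(\mathbb{Z})$; since $P$ is $\mathbb{Z}$-invertible, the $W_j$'s freely generate $K(Z_1, \ldots, Z_{n-1})$ over $K$, and $\sigma_2$ acts on them by $\Gamma$, namely $\sigma_2(W_1) = W_1^a W_2$, $\sigma_2(W_j) = W_{j+1}$ for $2 \le j \le n-2$, and $\sigma_2(W_{n-1}) = W_1^{c_1} W_2^{c_2} \cdots W_{n-2}^{c_{n-2}} W_{n-1}^b$. By Section~\ref{sec:problem_reduction} and Corollary~\ref{cor:redu_cor}, it then suffices to prove that $K(W_1, \ldots, W_{n-1})^{\sigma_2}$ is rational over $K$.

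Next I switch to the orbit variables $V_j := \sigma_2^{j-1}(W_1)$ for $j = 1, \ldots, n-1$. An induction using $\sigma_2(W_1) = W_1^a W_2$ and $\sigma_2(W_j) = W_{j+1}$ gives $W_{j+1} = V_{j+1}/V_j^a$, so the $V_j$'s still freely generate the field. In this basis $\sigma_2$ acts as the shift $V_j \mapsto V_{j+1}$ for $j \le n-2$, and $V_n := \sigma_2(V_{n-1})$ is some monomial $\prod_j V_j^{e_j}$. The matrix of $\sigma_2$ on the $V$-lattice is therefore a companion matrix, whose characteristic polynomial has degree $n-1$ and divides $t^n - 1 = (t-1)\Phi_n(t)$; since $\Phi_n$ is the only degree-$(n-1)$ factor of $t^n - 1$ (because $n$ is prime), this characteristic polynomial must equal $\Phi_n(t) = t^{n-1} + \cdots + 1$. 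This forces $e_1 = \cdots = e_{n-1} = -1$, so $V_1 V_2 \cdots V_n = 1$ and $\sigma_2$ cyclically permutes $V_1, \ldots, V_n$.

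The problem is thus reduced to showing $K(V_1, \ldots, V_{n-1})^{\sigma_2}$ is rational, where $\sigma_2$ cyclically permutes $V_1, \ldots, V_n$ subject to $V_1 V_2 \cdots V_n = 1$; this is the $C_n$-action on the $\mathbb{Z}[\zeta_n]$-lattice. Using $\zeta_n \in K$, I would introduce Lagrange resolvents $\eta_k := \sum_{j=1}^n \zeta_n^{(j-1)k} V_j$ satisfying $\sigma_2(\eta_k) = \zeta_n^{-k} \eta_k$; the natural monomial invariants are $\eta_0$, $\eta_1^n$, and $\eta_k/\eta_1^k$ for $k = 2, \ldots, n-1$, giving $n$ candidates where only $n-1$ are needed. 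The hard part — and the main obstacle — is to use the single relation $V_1 \cdots V_n = 1$, which via the circulant-determinant identity becomes one polynomial equation $\mathrm{Res}(\sum_k \eta_k t^k,\, t^n - 1) = n^n$ in the resolvents, to eliminate one generator \emph{rationally}, leaving $n - 1$ algebraically independent invariants. For $n = 3$ this is immediate from Hajja's rank-$2$ monomial rationality theorem; for larger $n$ one must exploit that the $1$'s on the subdiagonal of $\Gamma$ encode that the underlying lattice is the \emph{principal} $\mathbb{Z}[\zeta_n]$-module (rather than a non-principal ideal), which is what yields rationality rather than merely stable rationality.
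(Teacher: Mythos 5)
Your first two reductions are sound and in fact a little slicker than the paper's: passing to the orbit basis $V_j=\sigma_2^{j-1}(W_1)$ and invoking the companion-matrix argument (the characteristic polynomial equals the minimal polynomial, divides $t^n-1$, has degree $n-1$, and $n$ prime forces it to be $t^{n-1}+\cdots+t+1$) reaches the same normal form $\Sigma$ that the paper obtains in its Step~1 by explicitly solving the system coming from $\mathcal{N}^n=I$. But your proof stops exactly where the real work begins. You correctly reduce to showing that $K(V_1,\ldots,V_{n-1})^{\langle\sigma_2\rangle}$ is rational when $\sigma_2$ cyclically permutes $V_1,\ldots,V_n$ subject to $V_1\cdots V_n=1$, and you correctly guess that Lagrange resolvents and $\zeta_n\in K$ are the tools, but you form the resolvents additively from the $V_j$ themselves and then collide with the relation $V_1\cdots V_n=1$, which you acknowledge you cannot eliminate rationally. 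That is a genuine gap, and the closing speculation about principal versus non-principal $\mathbb{Z}[\zeta_n]$-modules does not fill it.

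The missing idea (the paper's Step~2) is to projectivize before taking resolvents: set $T_0=1$ and $T_j=V_1V_2\cdots V_j$ for $1\le j\le n-1$. The $T_j$ again freely generate the lattice, and a direct computation gives $\sigma_2(T_j)=T_{j+1}/T_1$ with indices read modulo $n$, so the $n$-tuple $(T_0:T_1:\cdots:T_{n-1})$ is cyclically permuted \emph{up to a common factor}. Consequently the ratios of Lagrange resolvents $Z_i=\bigl(\sum_j\zeta_n^{ij}T_j\bigr)/\bigl(\sum_j T_j\bigr)$ for $1\le i\le n-1$ are exactly $n-1$ elements on which $\sigma_2$ acts diagonally, $Z_i\mapsto\zeta_n^{-i}Z_i$; the multiplicative relation $\prod_j V_j=1$ has been absorbed into the projectivization instead of surviving as a constraint among the generators. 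One then verifies $K(Z_1,\ldots,Z_{n-1})=K(V_1,\ldots,V_{n-1})$ by using $\sum_{i=1}^{n-1}Z_i=n/\bigl(\sum_j T_j\bigr)-1$ to recover $\sum_j T_j$, hence each resolvent, hence each $T_j$ and each $V_j$; Fischer's theorem (Theorem~\ref{thm:abel_case}) then finishes the proof. Without this step your argument does not establish the lemma.
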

\begin{proof}
   \setcounter{step}{0}
   \begin{step}
      We shall first show that $\Gamma$ is conjugate to the matrix
      \begin{displaymath}
         \Sigma = 
         \left[
         \begin{smallmatrix}
            0 & & & & & -1 \\
            1 & & & & & -1 \\
            & 1 & & & & -1 \\
            & & \ddots & & & \vdots \\
            & & & 1 & & -1 \\
            & & & & 1 & -1 \\
         \end{smallmatrix}
         \right].
      \end{displaymath}
      Below, we shall show how to find a matrix $\mathcal{P} = \mathcal{P}_{n-1} \cdots \mathcal{P}_{1}$ such that $\mathcal{P}^{-1}\Gamma\mathcal{P} = \Sigma$.

      Set
      $
      \mathcal{P}_{n-1} = \left[{
         \begin{smallmatrix}
            1 & & & & \\
            & 1 & & & \\
            & & \ddots & & \\
            & & & 1 & -b-1 \\
            & & & 0 & 1
         \end{smallmatrix}
      }\right]
      $, then
      \begin{displaymath}
         \mathcal{N}_{n-1} := \mathcal{P}_{n-1}^{-1}\Gamma\mathcal{P}_{n-1} = \left[{
            \begin{smallmatrix}
               a & & & & \\
               1 & & & \textnormal{\Large *} & \\
               & 1 & & & \\
               & & \ddots & & \\
               & \textnormal{\Large 0} & & 1 & -1
            \end{smallmatrix}
         }\right].
      \end{displaymath}
      Now, suppose we have
      \begin{displaymath}
         \mathcal{N}_{i+1} = \left[{
            \begin{smallmatrix}
               a & & & & & & \\
               1 & & & & \text{\Large *} & & \\
               & \ddots & & & & & \\
               & & 1 & a_i & a_{i+1} & \cdots & a_{n-1} \\
               & & & 1 & 0 & & -1 \\
               & \textnormal{ \Large 0 } & & & \ddots & & \vdots \\
               & & & & & 1 & -1
            \end{smallmatrix}
         }\right].
      \end{displaymath}
      Conjugate $\mathcal{N}_{i+1}$ with the matrix
      \begin{displaymath}
         \mathcal{P}_i = \left[{
            \begin{smallmatrix}
               1 & & & & & \\
               & \ddots & & & & \\
               & & 1 & -a_i & \cdots & -a_{n-1} - 1 \\
               & & & 1 & & \\
               & & & & \ddots & \\
               & & & & & 1 \\
            \end{smallmatrix}
         }\right],
      \end{displaymath}
      we get
      \begin{displaymath}
         \mathcal{N}_i := \mathcal{P}_i^{-1}\mathcal{N}_{i+1}\mathcal{P}_i = \left[{
            \begin{smallmatrix}
               a & & & & & & \\
               1 & & & & \text{\Large *} & & \\
               & \ddots & & & & & \\
               & & 1 & b_{i-1} & b_i & \cdots & b_{n-1} \\
               & & & 1 & 0 & \cdots & -1 \\
               & \textnormal{ \Large 0 } & & & \ddots & & \vdots \\
               & & & & & 1 & -1 \\
            \end{smallmatrix}
         }\right]
      \end{displaymath}
      for some $b_{j}$.
      Repeat the process, then $\Delta$ is conjugate to
      $
         \mathcal{N} = \left[{
            \begin{smallmatrix}
               a_1 & a_2 & \cdots & a_{n-2} & a_{n-1} \\
               1 & & & & -1 \\
               & 1 & & & -1 \\
               & & \ddots & & \vdots \\
               & & & 1 & -1
            \end{smallmatrix}
         }\right]
      $ in $GL_{n-1}(\mathbb{Z})$, for some $a_j$. Note that $I_{n-1} = \Delta^{n} = \mathcal{N}^{n}$,
      regard $\mathcal{N}$ as the linear transformation:
      \begin{displaymath}
         \begin{tabular}{r c l}
            $\mathfrak{e}_1$ & $\mapsto$ & $a_1\mathfrak{e}_1 + a_2\mathfrak{e}_2 + \cdots + a_{n-2}\mathfrak{e}_{n-2} + a_{n-1}\mathfrak{e}_{n-1}$ \\
            $\mathfrak{e}_2$ & $\mapsto$ & $\mathfrak{e}_1 - \mathfrak{e}_{n-1}$ \\
            $\mathfrak{e}_3$ & $\mapsto$ & $\mathfrak{e}_2 - \mathfrak{e}_{n-1}$ \\
            & $\vdots$ &  \\
            $\mathfrak{e}_{n-1}$ & $\mapsto$ & $\mathfrak{e}_{n-2} - \mathfrak{e}_{n-1}$ \\
         \end{tabular}
      \end{displaymath}
      where $\{ \mathfrak{e}_{i} \}$ is the standard basis of $K^{n-1}$.
      Then the action of $\mathcal{N}^{n}$ on $\mathfrak{e_{n-1}}$ is:
      \begin{displaymath}
            \begin{tabular}{r c l}
            $\mathfrak{e}_{n-1}$ & $\mapsto$ & $(a_1 - 1)(a_1\mathfrak{e}_1 + a_2\mathfrak{e}_2 + \cdots + a_{n-2}\mathfrak{e}_{n-2} + a_{n-1}\mathfrak{e}_{n-1})$ \\
            & & $\quad + a_2(\mathfrak{e}_1 - \mathfrak{e}_{n-1}) + \cdots + a_{n-2}(\mathfrak{e}_{n-3} - \mathfrak{e}_{n-1}) + (a_{n-1} + 1)(\mathfrak{e}_{n-2} - \mathfrak{e}_{n-1})$. \\
            \end{tabular}
      \end{displaymath}
      Write $\beta = a_1 - 1$, then
      \begin{align*}
         \mathfrak{e}_{n-1}
         &= (\beta a_1 + a_2)\mathfrak{e}_1 + (\beta a_2 + a_3)\mathfrak{e}_2 + \cdots +
         (\beta a_{n-2} + a_{n-1} + 1)\mathfrak{e}_{n-2} \\
         &\qquad +(\beta a_{n-1} - a_2 - \cdots - a_{n-1} - 1)\mathfrak{e}_{n-1},
      \end{align*}
      which gives us a system of equations.
      Solve it to get
      \begin{align*}
         a_{2} &= -\beta a_{1} \\
         a_{3} &= \beta^{2} a_{1} \\
         \quad &\cdots \\
         a_{i+1} &= (-1)^{i} \beta^{i} a_{1} \\
         \quad &\cdots \\
         a_{n-2} &= (-1)^{n-3}\beta^{n-3} a_1 \\
         a_{n-1} &= (-1)^{n-2}\beta^{n-2} a_1 - 1 \\
      \end{align*}                  
      and
      \begin{align*}
         1 &= \beta a_{n-1} - (a_2 + a_3 + \cdots + a_{n-2} + a_{n-1} + 1) \\
         &= \beta a_{n-1} + \beta a_1 - \beta^2 a_1 + \cdots + (-1)^{n-2}\beta^{n-3}a_1 + (-1)^{n-1}\beta^{n-2}a_1 \\
         &= [(-1)^{n-2} \beta^{n-1} a_1 - \beta] + \beta a_1 \dfrac{(-\beta)^{n-2} - 1}{(-\beta) - 1} \\
         &= (-1)^{n-2}(a_1 - 1)^{n}.
      \end{align*}

      Since $n$ is odd, we get $a_1 = a_2 = \cdots = a_{n-2} = 0$ and $a_{n-1} = -1$. Hence $\Delta$ is conjugate to the matrix $\Sigma$.

%
   \end{step}
   \begin{step}
      We shall show that $K(X_1, X_2, \ldots, X_{n-1}, X_n)^G$ is rational over $K$.

      From \eqref{eq:reduce_of_fixed_field}, \eqref{eq:action_of_sigma_2_on_Z} and Step 1, we may assume that
      \begin{displaymath}
            K(X_1, \ldots, X_n)^G = K(Y_1, \ldots, Y_{n-1})^{\langle \sigma_2 \rangle}(Z),
      \end{displaymath}
      where the action of $\sigma_2$ on $Y_1, \ldots, Y_{n-1}$ is given by
      \begin{displaymath}
         \sigma_2 : Y_1 \mapsto Y_2 \mapsto \cdots \mapsto Y_{n-2} \mapsto Y_{n-1} \mapsto \dfrac{1}{Y_1Y_2 \cdots Y_{n-1}}.
      \end{displaymath}
      Define
      \begin{displaymath}
         Z_i = \dfrac{1 + \zeta_n^i Y_1 + \zeta_n^{2i} Y_1Y_2 + \cdots + \zeta_n^{i(n-1)} Y_1 \cdots Y_{n-1}}{1 + Y_1 + Y_1Y_2 + \cdots + Y_1Y_2 \cdots Y_{n-1}}, 1 \leq i \leq n-1.
      \end{displaymath}
      Then
      \begin{displaymath}
         \sigma_2 : Z_1 \mapsto \zeta_n^{n-1}Z_1, Z_2 \mapsto \zeta_n^{2(n-1)}Z_2, \ldots, Z_{n-1} \mapsto \zeta_n^{(n-1)^2}Z_{n-1}.
      \end{displaymath}
      It is a linear action and $\zeta_n \in K$, so by Theorem \ref{thm:abel_case}, $K(Z_1, \ldots, Z_{n-1})^{\langle \sigma_2 \rangle}$ is rational.

      Moreover,
      \begin{displaymath}
            Z_1 + Z_2 + \cdots + Z_{n-1} = \dfrac{n - (1 + Y_1 + Y_1Y_2 + \cdots + Y_1Y_2 \cdots Y_{n-1})}{1 + Y_1 + Y_1Y_2 + \cdots + Y_1Y_2 \cdots Y_{n-1}}.
      \end{displaymath}
      Thus
      \begin{align*}
         &\quad K(Z_1, Z_2, \ldots, Z_{n-1}) \\
         &= K(1 + \zeta_n^iY_1 + \zeta_n^{2i}Y_1Y_2 + \cdots + \zeta_n^{i(n-1)}Y_1Y_2 \cdots Y_{n-1} : 0 \leq i \leq n-1) \\
         &= K(Y_1, Y_1Y_2, \ldots, Y_1Y_2 \cdots Y_{n-1}) \\
         &= K(Y_1, Y_2, \ldots, Y_{n-1}).
      \end{align*}
      Hence
      \begin{align*}
         K(X_1, X_2, \ldots, X_{n-1}, X_n)^G
         &= K(Y_1, Y_2, \ldots, Y_{n-1})^{\langle \sigma_2 \rangle}(Z) \\
         &= K(Z_1, \ldots, Z_{n-1})^{\langle \sigma_2 \rangle}(Z)
      \end{align*}
      is rational over $K$.
   \end{step}
\end{proof}

\subsection{} \label{sec:algorithm}
In this section, we describe the algorithm to transform $\Delta$ to a matrix of the form
\begin{displaymath}
   \left[ \begin{smallmatrix}
      \begin{smallmatrix}
         a & \\ e_1 & \\ & e_2
      \end{smallmatrix}
      & \textnormal{\Large *} \\
      0 &
      \begin{smallmatrix}
         \ddots & & \\ & e_{n-2} & b
      \end{smallmatrix}
   \end{smallmatrix} \right].
\end{displaymath}
We first list two simple lemmas, the proofs are straightforward and omitted.

\begin{lemma} \label{lemma:rel_prime_sln_matrix}
   Let $a_1, \ldots, a_n \in \mathbb{Z}$ be relatively prime.
   There exists $\mathcal{M} \in SL_n(\mathbb{Z})$ with first column $\langle a_1, \ldots, a_n \rangle^t$.
\end{lemma}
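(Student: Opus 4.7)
The plan is to prove the lemma by induction on $n$, with Bezout's identity as the main tool. For the base case $n = 2$, since $\gcd(a_1, a_2) = 1$, Bezout gives integers $u, v$ with $a_1 v - a_2 u = 1$, so $\mathcal{M} = \begin{pmatrix} a_1 & u \\ a_2 & v \end{pmatrix}$ lies in $SL_2(\mathbb{Z})$ and has the desired first column. (A degenerate case $n = 1$ forces $a_1 = 1$ and $\mathcal{M} = [1]$.)

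For the inductive step, assume the claim for $n - 1$ and let $a_1, \ldots, a_n \in \mathbb{Z}$ be relatively prime. Set $d = \gcd(a_1, \ldots, a_{n-1})$; the degenerate case $d = 0$ reduces to $a_n = \pm 1$ and is handled directly, so assume $d \neq 0$. Write $a_i = d\, b_i$ for $1 \leq i \leq n-1$, so that $\gcd(b_1, \ldots, b_{n-1}) = 1$. By the inductive hypothesis there is a matrix $\mathcal{N} \in SL_{n-1}(\mathbb{Z})$ whose first column is $\langle b_1, \ldots, b_{n-1} \rangle^{t}$. Also, since $\gcd(a_1, \ldots, a_n) = 1$, we have $\gcd(d, a_n) = 1$, so Bezout supplies integers $u, v$ with $d v - a_n u = 1$.

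Using $\mathcal{N}$ and the pair $(u, v)$, I would build the desired matrix as a block expression of the form
\begin{displaymath}
   \mathcal{M} = {
      \left[ \begin{smallmatrix}
         d\, \mathfrak{n}_{1} & u\, \mathfrak{n}_{1} & \mathcal{N}_{(1)} \\
         a_n & v & \mathbf{0}
      \end{smallmatrix} \right]
   },
\end{displaymath}
where $\mathfrak{n}_{1} = \langle b_1, \ldots, b_{n-1} \rangle^{t}$ is the first column of $\mathcal{N}$, $\mathcal{N}_{(1)}$ is $\mathcal{N}$ with its first column deleted, and $\mathbf{0}$ is the $1 \times (n-2)$ zero row. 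The first column of $\mathcal{M}$ is $\langle db_1, \ldots, db_{n-1}, a_n \rangle^{t} = \langle a_1, \ldots, a_n \rangle^{t}$ by construction. To compute the determinant, perform the column operation that replaces the second column by $d \cdot (\text{col}_2) - u \cdot (\text{col}_1)$; this scales the determinant by $d$ and produces a new column with all entries zero except a $1$ in the bottom row (using $dv - a_n u = 1$). Cofactor expansion along this column reduces the remaining determinant to $\det[\,d\mathfrak{n}_{1} \mid \mathcal{N}_{(1)}] = d \cdot \det \mathcal{N} = d$, so $\det \mathcal{M} = \pm 1$. A standard sign correction (negating one of columns $2, \ldots, n$) forces $\det \mathcal{M} = 1$ if necessary.

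The only real subtlety is tracking signs carefully through the cofactor expansion so that the ambiguity can be absorbed into a single column flip that leaves the first column untouched. Beyond that, the argument is purely mechanical, consistent with the paper's remark that the proof is straightforward.
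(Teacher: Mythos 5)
Your argument is correct: the induction on $n$ with Bezout's identity, the block construction of $\mathcal{M}$ from $\mathcal{N}$ and the pair $(u,v)$, and the determinant computation via the column operation $\mathrm{col}_2 \mapsto d\cdot\mathrm{col}_2 - u\cdot\mathrm{col}_1$ (legitimate since $d\neq 0$, yielding $\det\mathcal{M}=\pm 1$, fixed by a sign flip in a later column) all check out, and the degenerate cases are handled. The paper omits the proof entirely, declaring it straightforward, so there is nothing to compare against; what you have written is the standard argument one would supply in its place.
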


\begin{lemma} \label{lemma:conj_result}
   Let $\mathcal{M}$ be a matrix of the form
   $
      \left[ \begin{smallmatrix}
         \mathcal{A} & \mathcal{C} \\
         \mathcal{B} & \mathcal{D}
      \end{smallmatrix} \right]
   $,
   where $\mathcal{A} \in M_k(\mathbb{Z})$, $\mathcal{D} \in M_l(\mathbb{Z})$.
   The first $(k-1)$ columns of $\mathcal{B}$ are zero and the last column is $\langle b_1, b_2, \ldots, b_l \rangle^t$.
   Assume that $d = \gcd \{ b_1, \ldots, b_l \} \not= 0$ and $b_j' = b_j / d$.
   Choose any matrix $\mathcal{P}' \in SL_l(\mathbb{Z})$ such that the first column is $\langle b_1', b_2', \ldots, b_l' \rangle^t$.
   Set
   $
      \mathcal{P} = {
      \left[ \begin{smallmatrix}
         I_k & \\
         & \mathcal{P}'
         \end{smallmatrix} \right]
      }
   $ and
   $
      \mathcal{P}^{-1}\mathcal{M}\mathcal{P} = {
         \left[ \begin{smallmatrix}
            \mathcal{A}' & \mathcal{C}' \\
            \mathcal{B}' & \mathcal{D}'
         \end{smallmatrix} \right]
      }
   $.
   Then $\mathcal{D}' = (\mathcal{P}')^{-1}\mathcal{D}\mathcal{P}'$ 
   and all the entries of $\mathcal{B}'$ are zero except that the $(1, k)$ entry is $d$.
\end{lemma}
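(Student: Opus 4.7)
The plan is to prove Lemma \ref{lemma:conj_result} by a direct block-matrix computation, exploiting the fact that $\mathcal{P}$ has been constructed specifically to be the identity on the first $k$ coordinates. Since there are no obstacles once the computation is carried out properly, I expect the only subtlety to be getting the indexing right so that the surviving nonzero entry of $\mathcal{B}'$ lands in position $(1,k)$.

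First I would write $\mathcal{P}^{-1} = \bigl[\begin{smallmatrix} I_k & 0 \\ 0 & (\mathcal{P}')^{-1}\end{smallmatrix}\bigr]$ (valid because $\mathcal{P}$ is block-diagonal with invertible blocks) and multiply out blockwise:
\begin{displaymath}
   \mathcal{P}^{-1}\mathcal{M}\mathcal{P}
   = \left[\begin{smallmatrix} I_k & 0 \\ 0 & (\mathcal{P}')^{-1}\end{smallmatrix}\right]
     \left[\begin{smallmatrix} \mathcal{A} & \mathcal{C} \\ \mathcal{B} & \mathcal{D}\end{smallmatrix}\right]
     \left[\begin{smallmatrix} I_k & 0 \\ 0 & \mathcal{P}'\end{smallmatrix}\right]
   = \left[\begin{smallmatrix} \mathcal{A} & \mathcal{C}\mathcal{P}' \\ (\mathcal{P}')^{-1}\mathcal{B} & (\mathcal{P}')^{-1}\mathcal{D}\mathcal{P}'\end{smallmatrix}\right].
\end{displaymath}
Reading off the lower-right block immediately yields $\mathcal{D}'=(\mathcal{P}')^{-1}\mathcal{D}\mathcal{P}'$, which is the first assertion.

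Next I would analyze $\mathcal{B}'=(\mathcal{P}')^{-1}\mathcal{B}$. By hypothesis only the last column of $\mathcal{B}$ (column $k$) is nonzero, and that column equals $d\,\langle b_1',\ldots,b_l'\rangle^{t}$, i.e.\ $d$ times the first column of $\mathcal{P}'$. Denote this first column by $\mathfrak{b}'$; then $\mathcal{P}'\mathfrak{e}_1 = \mathfrak{b}'$, so $(\mathcal{P}')^{-1}\mathfrak{b}' = \mathfrak{e}_1$, the first standard basis vector of $\mathbb{Z}^{l}$. Consequently the $k$th column of $(\mathcal{P}')^{-1}\mathcal{B}$ is $d\,\mathfrak{e}_1 = \langle d,0,\ldots,0\rangle^{t}$, while all other columns of $(\mathcal{P}')^{-1}\mathcal{B}$ are zero because $(\mathcal{P}')^{-1}$ kills zero columns. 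This shows that every entry of $\mathcal{B}'$ is zero except the $(1,k)$ entry, which equals $d$, completing the proof.
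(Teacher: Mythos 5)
Your proof is correct. The paper omits the proof of this lemma as ``straightforward,'' and your block-multiplication argument --- computing $\mathcal{P}^{-1}\mathcal{M}\mathcal{P}$ blockwise and observing that the only nonzero column of $\mathcal{B}$ is $d$ times the first column of $\mathcal{P}'$, hence maps to $d\,\mathfrak{e}_1$ under $(\mathcal{P}')^{-1}$ --- is exactly the intended verification.
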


Suppose we have
\begin{equation} \label{eq:x_in_r}
   a_1m' = \alpha_{n-2} r^{n-2} + \alpha_{n-3} r^{n-3} + \cdots + \alpha_1 r + \alpha_0,
\end{equation}
where $a_1 \in \mathbb{N}, \alpha_i \in \mathbb{Z}$. We may assume
$\gcd \{ a_1, \alpha_{n-2}, \alpha_{n-3}, \ldots, \alpha_1, \alpha_0 \} = 1$.
We define
\begin{equation} \label{eq:def_of_a_i}
   \begin{tabular}{r c l}
      $a_{n-1}$ & $=$ & $\alpha_{n-2}$ \\
      $a_{n-2}$ & $=$ & $\alpha_{n-2} r + \alpha_{n-3}$ \\
      & $\vdots$ & \\
      $a_i$ & $=$ & $\alpha_{n-2} r^{n-i-1} + \alpha_{n-3} r^{n-i-2} + \cdots + \alpha_{i-1}$ \\
      & $\vdots$ & \\
      $a_2$ & $=$ & $\alpha_{n-2} r^{n-3} + \alpha_{n-3} r^{n-4} + \cdots + \alpha_2 r + \alpha_1$. \\
   \end{tabular}
\end{equation}
Then $\gcd \{ a_1, a_2, \ldots, a_{n-1} \} = 1$.
Thus by Lemma \ref{lemma:rel_prime_sln_matrix}, there is a matrix $\mathcal{P}_0 \in SL_{n-1}(\mathbb{Z})$ with the first column $\langle a_1, a_2, \ldots, a_{n-1} \rangle^t$.

Let $\mathcal{B}_1 = [b_{ij}^{(1)}] = \mathcal{P}_0^{-1} \Delta \mathcal{P}_0$.
Note that if $\langle b_{21}^{(1)}, b_{31}^{(1)}, \ldots, b_{n-1,1}^{(1)} \rangle$ is zero, 
then the matrix $\mathcal{B}_{1}$ is reducible.
Since $n$ is prime, the minimal polynomial of $\Delta$ is $X^{n-1} + \cdots + X + 1$,
which is irreducible, a contradiction. Hence at least one of $b_{i1}^{(1)}$ is not zero.
Let $e_1 = \gcd \{ b_{21}^{(1)}, b_{31}^{(1)}, \ldots, b_{n-1,1}^{(1)} \}  \not= 0$ and put
$b_{k1}^{'(1)} = b_{k1}^{(1)} / e_1$ for $2 \leq k \leq n-1$. Then we have
\begin{displaymath}
   \gcd \{ b_{21}^{'(1)}, b_{31}^{'(1)}, \ldots, b_{n-1,1}^{'(1)} \} = 1.
\end{displaymath}
By Lemma \ref{lemma:rel_prime_sln_matrix} again, there is a matrix
$
   \mathcal{P}_1 = {
      \left[ \begin{smallmatrix}
         1 & \\
          & \mathcal{P}_1'
      \end{smallmatrix} \right]
   },
$
where $\mathcal{P}_1' \in SL_{n-2}(\mathbb{Z})$ with the first column
$\langle b_{21}^{'(1)}, b_{31}^{'(1)}, \ldots, b_{n-1,1}^{'(1)} \rangle^t$.

Let $\mathcal{B}_2 = [b_{ij}^{(2)}] = \mathcal{P}_1^{-1}\mathcal{B}_1P_1$, then by Lemma \ref{lemma:conj_result}, $\mathcal{B}_2$ has the form
\begin{displaymath}
   \mathcal{B}_2 = {
      \left[
         \begin{array}{c:ccc}
            { b_{11}^{(1)} } & \text{\Large *} & &  \\ \hdashline
            e_1 & b_{22}^{(2)} & & \\
            0 & b_{32}^{(2)} & & \\
            \vdots & \vdots & \text{\Large *} & \\
            0 & b_{n-1,2}^{(2)} & &
         \end{array}
      \right]
   }.
\end{displaymath}
As argument above, for $3 \leq k \leq n-1$, at least one of $b_{k2}^{(2)}$ is not zero.
Let $e_2 = \gcd \{ b_{32}^{(2)}, b_{42}^{(2)}, \ldots, b_{n-1,2}^{(2)} \} \not= 0$ and put $b_{k2}^{'(2)} = b_{k2}^{(2)} / e_2$. 
Then we have
\begin{displaymath}
   \gcd \{ b_{32}^{'(2)}, b_{42}^{'(2)}, \ldots, b_{n-1,2}^{'(2)} \} = 1.
\end{displaymath}
There is a matrix
$
   \mathcal{P}_2 = {
      \left[ \begin{smallmatrix}
         I_2 & \\
          & \mathcal{P}_2'
      \end{smallmatrix} \right]
   }
$, where
$\mathcal{P}_2' \in SL_{n-3}(\mathbb{Z})$ with the first column \\
$\langle b_{32}^{'(2)}, b_{42}^{'(2)}, \ldots, b_{n-1,2}^{'(2)} \rangle^t$.

Let $\mathcal{B}_3 = [b_{ij}^{(3)}] = \mathcal{P}_2^{-1}\mathcal{B}_2P_2$, then $\mathcal{B}_3$ has the form
\begin{displaymath}
   \mathcal{B}_3 = {
      \left[
         \begin{array}{cc:ccc}
            b_{11}^{(1)} & * & & &   \\
            e_1 & * & \text{ \Large *} & &  \\ \hdashline
            0 & e_2 & b_{33}^{(3)} & &  \\
            0 & 0 & b_{43}^{(3)} & \text{ \Large *} &  \\
            \vdots & \vdots & \vdots & &  \\
            0 & 0 & b_{n-1,3}^{(3)} & &
         \end{array}
      \right]
   }.
\end{displaymath}

Proceed repeatedly as above. At last, we obtain
\begin{align} \label{eq:def_of_2nd_last_B}
   \mathcal{B}_{n-3} &= \mathcal{P}_{n-4}^{-1}\mathcal{B}_{n-4}\mathcal{P}_{n-4} \nonumber \\
   &= {
      \left[
         \begin{array}{ccc:ccc}
            b_{11}^{(n-3)} & & & & & \\
            e_1 & & \text{\Large *} & & & \\
            & \ddots & & \text{ \Large *} & & \\ \hdashline
            & & e_{n-4} & b_{n-3, n-3}^{(n-3)} & b_{n-3, n-2}^{(n-3)} & b_{n-3, n-1}^{(n-3)} \\
            & \text{\Large 0} & & b_{n-2, n-3}^{(n-3)} & b_{n-2, n-2}^{(n-3)} & b_{n-2, n-1}^{(n-3)} \\
            & & & b_{n-1, n-3}^{(n-3)} & b_{n-1, n-2}^{(n-3)} & b_{n-1, n-1}^{(n-3)} \\
         \end{array}
      \right]
   }.
\end{align}
Choose $\alpha, \beta \in \mathbb{Z}$ such that $b_{n-2, n-3}^{'(n-3)}\beta - b_{n-1, n-3}^{'(n-3)}\alpha = 1$, then
\begin{equation} \label{eq:def_of_last_P}
   \mathcal{P}_{n-3} = {
      \left[
         \begin{array}{c:cc}
            I_{n-3} & & \\ \hdashline
            & b_{n-2, n-3}^{'(n-3)} & \alpha \\
            & b_{n-1, n-3}^{'(n-3)} & \beta
         \end{array}
      \right]
   } \in SL_{n-1}(\mathbb{Z})
\end{equation}
and
\begin{equation} \label{eq:def_of_last_B}
   \mathcal{B}_{n-2} = \mathcal{P}_{n-3}^{-1}\mathcal{B}_{n-3}\mathcal{P}_{n-3} = {
      \left[
         \begin{array}{ccc:ccc}
            b_{11}^{(n-2)} & & & & \\
            e_1 & & \text{\Large *} & & \\
            & \ddots & & \text{ \Large *} & \\ \hdashline
            & & e_{n-3} & b_{n-2, n-2}^{(n-2)} & b_{n-2, n-1}^{(n-2)} \\
            & \text{\Large 0} & & b_{n-1, n-2}^{(n-2)} & b_{n-1, n-1}^{(n-2)} \\
         \end{array}
      \right]
   }.
\end{equation}

   \subsection{} \label{sec:val_of_most_important_entry}
In this section, we shall show that $e_1 = \cdots = e_{n-3} = b_{n-1, n-2}^{(n-2)} = 1$.

We claim that, with notations as in previous section, we have
   \begin{displaymath}
      b_{n-1, n-2}^{(n-2)} = \frac{N_{\mathbb{Q}(\zeta_n)/\mathbb{Q}}(x)}{m' e_{n-3}^2 e_{n-4}^3 \ldots e_1^{n-2}},
   \end{displaymath}
   where $x = \alpha_{n-2}\zeta_{n}^{n-2} + \alpha_{n-3}\zeta_{n}^{n-3} + \cdots + \alpha_1\zeta_{n} + \alpha_0 \in \mathbb{Z}[\zeta_{n}]$.
   \setcounter{step}{0}
   \begin{step}
      We first show that $b_{n-1, n-2}^{(n-2)}$ is actually a determinant of a particular matrix \eqref{eq:val_of_most_important_entry}.

      From \eqref{eq:def_of_2nd_last_B}, \eqref{eq:def_of_last_P} and \eqref{eq:def_of_last_B}, we have
      \begin{displaymath}
         \left[ \begin{smallmatrix}
            \beta & -\alpha \\
            -b_{n-1, n-3}^{'(n-3)} & b_{n-2, n-3}^{'(n-3)}
         \end{smallmatrix} \right]
         \left[ \begin{smallmatrix}
            b_{n-2,n-2}^{(n-3)} & b_{n-2,n-1}^{(n-3)} \\
            b_{n-1,n-2}^{(n-3)} & b_{n-1,n-1}^{(n-3)}
         \end{smallmatrix} \right]
         \left[ \begin{smallmatrix}
            b_{n-2, n-3}^{'(n-3)} & \alpha \\
            b_{n-1, n-3}^{'(n-3)} & \beta
         \end{smallmatrix} \right]
         = {
            \left[ \begin{smallmatrix}
               b_{n-2,n-2}^{(n-2)} & b_{n-2,n-1}^{(n-2)} \\
               b_{n-1,n-2}^{(n-2)} & b_{n-1,n-1}^{(n-2)}
            \end{smallmatrix} \right]
         }.
      \end{displaymath}
      By direct computation, we get
      \begin{align} \label{eq:entry}
         b_{n-1, n-2}^{(n-2)} &= {
            \left| \begin{smallmatrix}
               0 & -b_{n-1,n-3}^{'(n-3)} & b_{n-2,n-3}^{'(n-3)} \\
               b_{n-2,n-3}^{'(n-3)} & b_{n-2,n-2}^{(n-3)} & b_{n-2,n-1}^{(n-3)} \\
               b_{n-1,n-3}^{'(n-3)} & b_{n-1,n-2}^{(n-3)} & b_{n-1,n-1}^{(n-3)}
            \end{smallmatrix} \right|
         } \nonumber \\
         &= {
            \frac{1}{e_{n-3}^2}
            \left| \begin{smallmatrix}
               0 & -b_{n-1,n-3}^{(n-3)} & b_{n-2,n-3}^{(n-3)} \\
               b_{n-2,n-3}^{(n-3)} & b_{n-2,n-2}^{(n-3)} & b_{n-2,n-1}^{(n-3)} \\
               b_{n-1,n-3}^{(n-3)} & b_{n-1,n-2}^{(n-3)} & b_{n-1,n-1}^{(n-3)}
            \end{smallmatrix} \right|
         }.
      \end{align}
      Note that
      $
         \langle -b_{n-1,n-3}^{(n-3)}, b_{n-2,n-3}^{(n-3)} \rangle = {
            \wedge^{1} \langle b_{n-2,n-3}^{(n-3)}, b_{n-1,n-3}^{(n-3)} \rangle^{t}
         }
      $, where $\wedge^{1} \langle b_{n-2,n-3}^{(n-3)}, b_{n-1,n-3}^{(n-3)} \rangle^{t}$ is defined in Definition \ref{def:wedge}, 
      and
      \begin{displaymath}
         \left[ \begin{smallmatrix}
            b_{n-3,n-3}^{(n-3)} & b_{n-3,n-2}^{(n-3)} & b_{n-3,n-1}^{(n-3)} \\
            b_{n-2,n-3}^{(n-3)} & b_{n-2,n-2}^{(n-3)} & b_{n-2,n-1}^{(n-3)} \\
            b_{n-1,n-3}^{(n-3)} & b_{n-1,n-2}^{(n-3)} & b_{n-1,n-1}^{(n-3)} \\
         \end{smallmatrix} \right]
         = {
            \mathcal{P}_{n-4}^{'-1}
            \left[ \begin{smallmatrix}
	            b_{n-3,n-3}^{(n-4)} & b_{n-3,n-2}^{(n-4)} & b_{n-3,n-1}^{(n-4)} \\
	            b_{n-2,n-3}^{(n-4)} & b_{n-2,n-2}^{(n-4)} & b_{n-2,n-1}^{(n-4)} \\
	            b_{n-1,n-3}^{(n-4)} & b_{n-1,n-2}^{(n-4)} & b_{n-1,n-1}^{(n-4)} \\
            \end{smallmatrix} \right]
            \mathcal{P}_{n-4}'
         },
      \end{displaymath}
      where $\mathcal{P}_{n-4}' \in SL_{3}(\mathbb{Z})$ with first column
      $\mathfrak{q}' := \langle b_{n-3,n-4}^{'(n-4)}, b_{n-2,n-4}^{'(n-4)}, b_{n-1,n-4}^{'(n-4)} \rangle^{t}$.

      Apply Proposition \ref{prop:minor_of_mtx_conjugation} to \eqref{eq:entry}, where we take 
      \begin{displaymath}
         \mathcal{B} = {
            \left[ \begin{smallmatrix}
	            b_{n-3,n-3}^{(n-4)} & b_{n-3,n-2}^{(n-4)} & b_{n-3,n-1}^{(n-4)} \\
	            b_{n-2,n-3}^{(n-4)} & b_{n-2,n-2}^{(n-4)} & b_{n-2,n-1}^{(n-4)} \\
	            b_{n-1,n-3}^{(n-4)} & b_{n-1,n-2}^{(n-4)} & b_{n-1,n-1}^{(n-4)} \\
            \end{smallmatrix} \right]
         }, 
      \end{displaymath}
      we get
      \begin{align} \label{eq:entry_4_by_4}
         b_{n-1, n-2}^{(n-2)} &= {
            -\frac{1}{e_{n-3}^{2}}
            \left| \begin{smallmatrix}
	               0 & {
	                  \wedge^{2} \left[
	                     \mathfrak{q}', \mathcal{B}\mathfrak{q}'
	                  \right]
	               } \\
	               {
	                  \begin{matrix}
	                     b_{n-3,n-4}^{'(n-4)} \\ b_{n-2,n-4}^{'(n-4)} \\ b_{n-1,n-4}^{'(n-4)}
	                  \end{matrix}
	               }
	               &
	               {
				         \begin{matrix}
				            b_{n-3,n-3}^{(n-4)} & b_{n-3,n-2}^{(n-4)} & b_{n-3,n-1}^{(n-4)} \\
				            b_{n-2,n-3}^{(n-4)} & b_{n-2,n-2}^{(n-4)} & b_{n-2,n-1}^{(n-4)} \\
				            b_{n-1,n-3}^{(n-4)} & b_{n-1,n-2}^{(n-4)} & b_{n-1,n-1}^{(n-4)} \\
				         \end{matrix}
	               }
            \end{smallmatrix} \right|
         }.
      \end{align}

      Write $\mathfrak{q} = \langle b_{n-3,n-4}^{(n-4)}, b_{n-2,n-4}^{(n-4)}, b_{n-1,n-4}^{(n-4)} \rangle^{t}$, then we have $\mathfrak{q} = e_{n-4}\mathfrak{q}'$.
      Thus, each column of the matrix $\left[ \mathfrak{q}, \mathcal{B}\mathfrak{q} \right]$ is $e_{n-4}$ times that of $\left[ \mathfrak{q}', \mathcal{B}\mathfrak{q}' \right]$.
      Therefore, we get
      \begin{equation} \label{eq:coef_of_wedges}
         \wedge^{2} \left[ \mathfrak{q}', \mathcal{B}\mathfrak{q}' \right] = {
            \frac{1}{e_{n-4}^{2}}
            \wedge^{2} \left[ \mathfrak{q}, \mathcal{B}\mathfrak{q} \right]
         }.
      \end{equation}

      From \eqref{eq:entry_4_by_4} and \eqref{eq:coef_of_wedges}, we obtain
      \begin{displaymath}
         b_{n-1, n-2}^{(n-2)} = {
            -\frac{1}{e_{n-3}^{2}e_{n-4}^{3}}
            \left| \begin{smallmatrix}
	               0 & {
	                  \wedge^{2} \left[
	                     \mathfrak{q}, \mathcal{B}\mathfrak{q}
	                  \right]
	               } \\
	               {
	                  \begin{smallmatrix}
	                     b_{n-3,n-4}^{(n-4)} \\ b_{n-2,n-4}^{(n-4)} \\ b_{n-1,n-4}^{(n-4)}
	                  \end{smallmatrix}
	               }
	               &
	               {
				         \begin{smallmatrix}
				            b_{n-3,n-3}^{(n-4)} & b_{n-3,n-2}^{(n-4)} & b_{n-3,n-1}^{(n-4)} \\
				            b_{n-2,n-3}^{(n-4)} & b_{n-2,n-2}^{(n-4)} & b_{n-2,n-1}^{(n-4)} \\
				            b_{n-1,n-3}^{(n-4)} & b_{n-1,n-2}^{(n-4)} & b_{n-1,n-1}^{(n-4)} \\
				         \end{smallmatrix}
	               }
            \end{smallmatrix} \right|
         }.
      \end{displaymath}

      Apply Proposition \ref{prop:minor_of_mtx_conjugation} repeatedly, at last, we get
      \begin{align} \label{eq:val_of_most_important_entry}
         b_{n-1, n-2}^{(n-2)}
         &= {
            \dfrac{(-1)^{n-1}}{e_{n-3}^{2} e_{n-4}^{3} \cdots e_{1}^{n-2}}
            \left|
            \begin{tabular}{ c:c }
               $0$ & {$
                  \wedge^{n-2}[\mathfrak{p}, \Delta\mathfrak{p}, \ldots, \Delta^{n-3}\mathfrak{p}]
               $} \\ \hdashline
               {$
                  \begin{matrix}
                     a_1 \\ a_2 \\ a_3 \\ \vdots \\ a_{n-1}
                  \end{matrix}
               $} &
               {$
                  \begin{matrix}
                     r & & & & -\frac{x_{n-1}}{m'} \\
                     m' & & & & -x_{n-2} \\
                     & 1 & & & -x_{n-3} \\
                     & & \ddots & & \vdots \\
                     & & & 1 & -x_{1}
                  \end{matrix}
               $}
            \end{tabular}
            \right|
         },
      \end{align}
      where $\mathfrak{p} = \langle a_1, a_2, \ldots, a_{n-1} \rangle^t$.
   \end{step}
   \begin{step}
      We shall compute $b_{n-1,n-2}^{(n-2)}$ in \eqref{eq:val_of_most_important_entry} more explicitly.

      Write $
         \mathcal{H} = {
               [\mathfrak{p}, \Delta\mathfrak{p}, \ldots, \Delta^{n-3}\mathfrak{p}]
         }
         = {
            \left[ \begin{smallmatrix} \mathfrak{h}^{1} \\ \mathfrak{h}^{2} \\ \vdots \\ \mathfrak{h}^{n-1} \end{smallmatrix} \right]
         }
      $, then we have
      \begin{align*}
      &\quad \wedge^{n-2}[\mathfrak{p}, \Delta\mathfrak{p}, \ldots, \Delta^{n-3}\mathfrak{p}] \\
      &= \left\langle {
         {
            (-1)^{n-2}
            \left| \begin{smallmatrix}
               \mathfrak{h}^2 \\ \mathfrak{h}^3 \\ \vdots \\ \mathfrak{h}^{n-2} \\ \mathfrak{h}^{n-1}
            \end{smallmatrix} \right|
         },
         {
            \frac{(-1)^{n-3}}{m'}
            \left| \begin{smallmatrix}
               m'\mathfrak{h}^{1} \\ \mathfrak{h}^3 \\ \vdots \\ \mathfrak{h}^{n-2} \\ \mathfrak{h}^{n-1}
            \end{smallmatrix} \right|
         },
         \ldots,
         {
            \frac{-1}{m'}
            \left| \begin{smallmatrix}
               m'\mathfrak{h}^{1} \\ \mathfrak{h}^2 \\ \vdots \\ \mathfrak{h}^{n-3} \\ \mathfrak{h}^{n-1}
            \end{smallmatrix} \right|
         },
         {
            \frac{1}{m'}
            \left| \begin{smallmatrix}
               m'\mathfrak{h}^{1} \\ \mathfrak{h}^2 \\ \vdots \\ \mathfrak{h}^{n-3} \\ \mathfrak{h}^{n-2}
            \end{smallmatrix} \right|
         }
      } \right\rangle.
      \end{align*}
      For each $2 \leq i \leq n-2$, the $i$th component of
      $\wedge^{n-2} [\mathfrak{p}, \Delta\mathfrak{p}, \ldots, \Delta^{n-3}\mathfrak{p}]$ is
      \begin{align*}
      d_i &:= {
         \frac{(-1)^{n-i-1}}{m'}
         \left| \begin{smallmatrix}
            m'\mathfrak{h}^{1} \\ \vdots \\ \mathfrak{h}^{i-1} \\ \widehat{\mathfrak{h}^{i}} \\ \mathfrak{h}^{i+1} \\ \vdots \\ \mathfrak{h}^{n-1}
         \end{smallmatrix} \right|
      }
      = {
         \frac{(-1)^{n-i-1}}{m'}
         \left| \begin{smallmatrix}
            m'\mathfrak{h}^{1}-r\mathfrak{h}^{2} \\ \vdots \\ \mathfrak{h}^{i-1}-r^2\mathfrak{h}^{i+1} \\ \widehat{\mathfrak{h}^{i}} \\ \mathfrak{h}^{i+1}-r\mathfrak{h}^{i+2} \\ \vdots \\ \mathfrak{h}^{n-2} - r\mathfrak{h}^{n-1} \\ \mathfrak{h}^{n-1}
         \end{smallmatrix} \right|
      }.
      \end{align*}
      Define
      $
         \mathcal{R} = {
            \left[ \begin{smallmatrix}
               m'\mathfrak{h}^{1}-r\mathfrak{h}^{2} \\ \mathfrak{h}^{2}-r\mathfrak{h}^{3} \\ \vdots \\ \mathfrak{h}^{n-2}-r\mathfrak{h}^{n-1} \\ \mathfrak{h}^{n-1}
            \end{smallmatrix} \right]
         }
      $ and denote the $i$th row of $\mathcal{R}$ by $\mathfrak{r}^{i}$, we get
      \begin{displaymath}
         d_i = {
	         \frac{(-1)^{n-i-1}}{m'}
	         \left| \begin{smallmatrix}
	            \mathfrak{r}^{1} \\ \vdots \\ \mathfrak{r}^{i-1}+r\mathfrak{r}^{i} \\ \mathfrak{r}^{i+1} \\ \vdots \\ \mathfrak{r}^{n-1}
	         \end{smallmatrix} \right|
         }.
      \end{displaymath}
      Similarly, the last component is
      \begin{displaymath}
      d_{n-1} := \dfrac{1}{m'}\left| \begin{smallmatrix} \mathfrak{h}^{1} \\ \vdots \\ \mathfrak{h}^{n-3} \\ \mathfrak{h}^{n-2} \end{smallmatrix} \right|
          = \dfrac{1}{m'}\left| \begin{smallmatrix} \mathfrak{r}^{1} \\ \vdots \\ \mathfrak{r}^{n-3} \\ \mathfrak{h}^{n-2} \end{smallmatrix} \right|
          = \dfrac{1}{m'}\left| \begin{smallmatrix} \mathfrak{r}^{1} \\ \vdots \\ \mathfrak{r}^{n-3} \\ \mathfrak{r}^{n-2} + r\mathfrak{r}^{n-1} \end{smallmatrix} \right|
      \end{displaymath}
      and the first component is
      $d_1 := (-1)^{n-2}\left| \begin{smallmatrix} \mathfrak{r}^{2} \\ \vdots \\ \mathfrak{r}^{n-1} \end{smallmatrix} \right|$.

      For each $1 \leq i \leq n-1$, let $\omega_i$ be minor of $\mathcal{R}$ by deleting $i$th row, then we obtain
      \begin{equation} \label{eq:1st_and_last_of_d_i_in_R}
         d_1 = (-1)^{n-2}\omega_{1},
         \qquad
         d_{n-1} = \frac{1}{m'}(\omega_{n-1} + r\omega_{n-2})
      \end{equation}
       and
      \begin{equation} \label{eq:d_i_in_R}
         d_{i} = \frac{(-1)^{n-i-1}}{m'} (\omega_{i} + r\omega_{i-1})
      \end{equation}
      for all $2 \leq i \leq n-2$.

      Now we consider the determinant in \eqref{eq:val_of_most_important_entry}, we have
      \begin{align} \label{eq:ready_a_i_to_alpha_i}
         {
            \left| \begin{smallmatrix}
               0        & d_1 & d_2 & \ldots & d_{n-2} & d_{n-1} \\
               a_1      & r   &     &        &         & -\frac{x_{n-1}}{m'} \\
               a_2      & m'   &     &        &         & -x_{n-2} \\
               a_3      &     & 1   &        &         & -x_{n-3} \\
               \vdots   &     &     & \ddots &         & \vdots \\
               a_{n-1}  &     &     &        & 1       & -x_1 \\
            \end{smallmatrix} \right|
         }
         &= \frac{1}{m'} {
            \left| \begin{smallmatrix}
               0 & d_{1} & m'd_{2} & \ldots & m'd_{n-2} & m'd_{n-1} \\
               m'a_1 & r & & & & -x_{n-1} \\
               a_2 & 1 & & & & -x_{n-2} \\
               a_3 & & 1 & & & -x_{n-3} \\
               \vdots & & & \ddots & & \vdots \\
               a_{n-1} & & & & 1 & -x_1 \\
            \end{smallmatrix} \right|
         }.
      \end{align}
      By the relations in \eqref{eq:x_in_r}, \eqref{eq:def_of_a_i} and the definition in \eqref{def:x_j}, apply suitable row operations,  \eqref{eq:ready_a_i_to_alpha_i} is equal to
      \begin{equation} \label{eq:a_i_to_alpha_i}
         \frac{1}{m'} {
            \left| \begin{smallmatrix}
               0 & d_{1} & m'd_{2} & \ldots & m'd_{n-2} & m'd_{n-1} \\
               \alpha_0 & 0 & & & & -1 \\
               \alpha_1 & 1 & -r & & & -1 \\
               \alpha_2 & & 1 & & & -1 \\
               \vdots & & & \ddots & & \vdots \\
               \alpha_{n-2} & & & & 1 & -x_1 \\
            \end{smallmatrix} \right|
         }.
      \end{equation}
      Using \eqref{eq:1st_and_last_of_d_i_in_R} and \eqref{eq:d_i_in_R}, the determinant in \eqref{eq:a_i_to_alpha_i} becomes
      \begin{align} \label{eq:det_of_ready_to_simplify_to_R}
         &\quad {
            \left|
            \begin{smallmatrix}
               0 & (-1)^{n-2}\omega_1 & (-1)^{n-3}(\omega_2 + r\omega_1) & \ldots
                                         & -(\omega_{n-2} + r\omega_{n-3})
                                         & (\omega_{n-1} + r\omega_{n-2}) \\
               \alpha_0 & 0 & & & & -1 \\
               \alpha_1 & 1 & -r & & & -1 \\
               \alpha_2 & & 1 & & & -1 \\
               \vdots & & & \ddots & & \vdots \\
               \alpha_{n-2} & & & & 1 & -x_1 \\
            \end{smallmatrix}
            \right|
         } \nonumber \\
         &= {
            \left|
            \begin{smallmatrix}
               0 & (-1)^{n-2}\omega_1 & (-1)^{n-3}\omega_2 & \ldots & -\omega_{n-2} & \omega_{n-1} \\
               \alpha_0 & 0 & & & & -1 \\
               \alpha_1 & 1 & 0 & & & -1 \\
               \alpha_2 & & 1 & & & -1 \\
               \vdots & & & \ddots & & \vdots \\
               \alpha_{n-2} & & & & 1 & -1 \\
            \end{smallmatrix}
            \right|
         }. \text{ (by column operations) }
      \end{align}
      Expand the determinant in \eqref{eq:det_of_ready_to_simplify_to_R} along the first row, 
      and from \eqref{eq:ready_a_i_to_alpha_i}-\eqref{eq:det_of_ready_to_simplify_to_R}, 
      we get
      \begin{align*}
         &\quad b_{n-1,n-2}^{(n-2)} \\
         &= {
            \dfrac{(-1)^{n-1}}{m'c} (-1)^{n-1} \{
               (-1)^{n+1}(\alpha_1-\alpha_0)\omega_1 +
               (-1)^{n+2}(\alpha_2-\alpha_0)\omega_2 + \cdots + 
         } \\
         &\qquad \qquad \qquad {
               (-1)^{n+(n-2)}(\alpha_{n-2}-\alpha_0)\omega_{n-2} +
               \alpha_0\omega_{n-1}
            \}
         } \\
         &= {
            \dfrac{(-1)^{n-1}}{m'c} \{
               (\alpha_1-\alpha_0)\omega_1 -
               (\alpha_2-\alpha_0)\omega_2 + \cdots +
         } \\
         &\qquad \qquad \qquad {
               (-1)^{n-3}(\alpha_{n-2}-\alpha_0)\omega_{n-2} +
               (-1)^{n-2}(-\alpha_0)\omega_{n-1}
            \}
         } \\
         &= {
            \dfrac{1}{m'c}
            \left|
               \begin{array}{cccc:c}
                  & & & & \alpha_0-\alpha_1 \\
                  & & & & \alpha_0-\alpha_2 \\
                  & \mathcal{R} & & & \vdots \\
                  & & & & \alpha_0-\alpha_{n-2} \\
                  & & & & \alpha_0
               \end{array}
               \right|
         }, \text{ (by definition of $\omega_i$)}
      \end{align*}
      where $c = e_{n-3}^{2} \cdots e_{1}^{n-2}$.

      In the following, we shall compute $\mathcal{R}$ explicitly.

      Recall that
      $
         \mathcal{R} = {
            \left[ \begin{smallmatrix}
               m'\mathfrak{h}^{1}-r\mathfrak{h}^{2} \\ \mathfrak{h}^{2}-r\mathfrak{h}^{3} \\ \vdots \\ \mathfrak{h}^{n-2}-r\mathfrak{h}^{n-1} \\ \mathfrak{h}^{n-1}
            \end{smallmatrix} \right]
         }
      $ and
      $
         \mathcal{H} = {
               [\mathfrak{p}, \Delta\mathfrak{p}, \ldots, \Delta^{n-3}\mathfrak{p}]
         }
         = {
            \left[ \begin{smallmatrix} \mathfrak{h}^{1} \\ \mathfrak{h}^{2} \\ \vdots \\ \mathfrak{h}^{n-1} \end{smallmatrix} \right]
         }
      $.
      To compute $\mathcal{R}$ explicitly, we shall first compute $\Delta^{i}$ explicitly.

      The powers of the matrix $\Delta$ are given by
      \begin{equation} \label{eq:pow_of_Delta}
	      \Delta^{i} = { \left[
	         \mathfrak{b}_{i}, \mathfrak{e}_{i+2}, \mathfrak{e}_{i+3},
	         \ldots, \mathfrak{e}_{n-1}, \mathfrak{f}, \mathfrak{g},
	         \mathfrak{e}_{2}, \mathfrak{e}_{3}, \ldots, \mathfrak{e}_{i-1}
	      \right] },
      \end{equation}
      where $2 \leq i \leq n-2$ and
      \begin{displaymath}
	      \mathfrak{b}_{i} = {
	         \left[ \begin{smallmatrix}
	            r^{i} \\ r^{i-1}m' \\ r^{i-2}m' \\ \vdots \\ m' \\ 0 \\ \vdots \\ 0
	         \end{smallmatrix} \right]
	      },
	      \mathfrak{f} = {
	         \left[ \begin{smallmatrix}
	            -\frac{x_{n-1}}{m'} \\ -x_{n-2} \\ -x_{n-3} \\ \vdots \\ -x_{1}
	         \end{smallmatrix} \right]
	      }
         \textnormal{ and }
	      \mathfrak{g} = {
	         \left[ \begin{smallmatrix}
	            \frac{x_{n-1}}{m'} \\ rx_{n-3} \\ rx_{n-4} \\ \vdots \\ rx_{1} \\ r
	         \end{smallmatrix} \right]
	      }.
      \end{displaymath}
	   The verification of \eqref{eq:pow_of_Delta} is straightforward and can be done by induction and using the relations:
      \begin{displaymath}
         x_{i}x_{n-j} - rx_{i-1}x_{n-j-1} = x_{n+i-j}, 
      \end{displaymath}
      where $i \geq 2$ and $j \leq n-2$.

      From \eqref{eq:pow_of_Delta} and the relations
      \begin{displaymath}
         ra_{i} - a_{i-1} + \alpha_{i-2} = 0
         \quad \textnormal{for } 3 \leq i \leq n-1,
      \end{displaymath}
      the entries of $\mathcal{R}$ can be computed explicitly.
      For example, the entries in the first row of $\mathcal{R}$ are
      \begin{align*}
         r_{1j} &= {
            m'a_{1}r^{j-1} - a_{n-j+1}x_{n-1} + a_{n-j+2}x_{n-1}
         } \\
         &\quad {
            -ra_{1}r^{j-2}m' + ra_{n-j+1}x_{n-2} - r^{2}a_{n-j+2}x_{n-3} - ra_{n-j+3}
         } \\
         &= {
            -\alpha_{n-j} + \alpha_{n-j+1}
         },
      \end{align*}
      for $j \geq 3$. Thus, we conclude that
      \begin{align} \label{eq:det_val_of_most_important_entry}
         b_{n-1,n-2}^{(n-2)} &= {
            \dfrac{1}{m'c}
            \left| \begin{smallmatrix}
               \alpha_{0} & -\alpha_{n-2} & \alpha_{n-2}-\alpha_{n-3} & \alpha_{n-3}-\alpha_{n-4} & \cdots & \alpha_{0}-\alpha_{1} \\
               \alpha_{1} & \alpha_{0}-\alpha_{n-2} & -\alpha_{n-3} & \alpha_{n-2}-\alpha_{n-4} & \cdots & \alpha_{0}-\alpha_{2} \\
               \vdots & \vdots & \vdots & \vdots & & \vdots \\
               \alpha_{n-3} & \alpha_{n-4}-\alpha_{n-2} & \alpha_{n-5}-\alpha_{n-3} & \alpha_{n-6}-\alpha_{n-4} & \cdots & \alpha_{0}-\alpha_{n-2} \\
               \alpha_{n-2} & \alpha_{n-3}-\alpha_{n-2} & \alpha_{n-4}-\alpha_{n-3} & \alpha_{n-5}-\alpha_{n-4} & \cdots & \alpha_{0} \\
            \end{smallmatrix} \right|
         }.
      \end{align}
   \end{step}

   Let    
   $
      x = {
         \alpha_0 + \alpha_{1}\zeta_n + \cdots + \alpha_{n-2}\zeta_n^{n-2}
      } \in \mathbb{Z}[\zeta_n]
   $, the norm of $x$ is
   \begin{equation} \label{eq:norm_of_x}
      N_{\mathbb{Q}(\zeta_n)/\mathbb{Q}}(x) = {
            \left| \begin{smallmatrix}
               \alpha_{0} & -\alpha_{n-2} & \alpha_{n-2}-\alpha_{n-3} & \alpha_{n-3}-\alpha_{n-4} & \cdots & \alpha_{2}-\alpha_{1} \\
               \alpha_{1} & \alpha_{0}-\alpha_{n-2} & -\alpha_{n-3} & \alpha_{n-2}-\alpha_{n-4} & \cdots & \alpha_{3}-\alpha_{1} \\
               \vdots & \vdots & \vdots & \vdots & & \vdots \\
               \alpha_{n-3} & \alpha_{n-4}-\alpha_{n-2} & \alpha_{n-5}-\alpha_{n-3} & \alpha_{n-6}-\alpha_{n-4} & \cdots & -\alpha_{1} \\
               \alpha_{n-2} & \alpha_{n-3}-\alpha_{n-2} & \alpha_{n-4}-\alpha_{n-3} & \alpha_{n-5}-\alpha_{n-4} & \cdots & \alpha_{0}-\alpha_{1} \\
            \end{smallmatrix} \right|
      }.
   \end{equation}
   By adding all the first $n-2$ columns to last column of \eqref{eq:norm_of_x}, we can conclude that the determinant in \eqref{eq:det_val_of_most_important_entry} is actually the norm of $x$; that is, 
   \begin{displaymath}
      b_{n-1,n-2}^{(n-2)} = {
         \frac{N_{\mathbb{Q}(\zeta_n)/\mathbb{Q}}(x)}{m'c} 
      }.
   \end{displaymath}

Now, from assumption that $N_{\mathbb{Q}(\zeta_n)/\mathbb{Q}}(x) = m'$, we get
\begin{displaymath}
   b_{n-1,n-2}^{(n-2)} = {
      \frac{N_{\mathbb{Q}(\zeta_n)/\mathbb{Q}}(x)}{m'e_{n-3}^{2} \cdots e_{1}^{n-2}}
   }
   = {
      \frac{1}{e_{n-3}^{2} \cdots e_{1}^{n-2}}
   }.
\end{displaymath}
Since $\mathcal{B}_{j}$ are integral matrices and $e_{i} \in \mathbb{N}$, we conclude that $e_1 = \cdots = e_{n-3} = b_{n-1,n-2}^{(n-2)} = 1$, which completes the proof of the Main Theorem.

\section{Corollaries and Examples} \label{sec:cors_and_examples}
In fact, the most important case of Main Theorem is the following theorem.
\begin{thm} \label{thm:most_important_case}
   Let $p, q$ be odd primes, $K$ be a field such that neither $p$ nor $q$ divides the characteristic of $K$ and both $\zeta_p, \zeta_q$ lie in $K$.
   Let
   \begin{displaymath}
      G = C_p \rtimes_r C_q = \langle \sigma_1, \sigma_2 : \sigma_1^p = \sigma_2^q = 1, \sigma_2^{-1}\sigma_1\sigma_2 = \sigma_1^r \rangle,
   \end{displaymath}
   where $r^q \equiv 1 \pmod{p}$.
   If there exists $x \in \mathbb{Z}[\zeta_q]$ such that the norm $N_{\mathbb{Q}(\zeta_q)/\mathbb{Q}}(x) = p$, then $K(G)$ is rational over $K$.
\end{thm}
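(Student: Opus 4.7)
The strategy is to deduce Theorem~\ref{thm:most_important_case} from the Main Theorem applied with $(m,n)=(p,q)$, so the whole task reduces to verifying the numerical hypotheses on $a_1$ and the $\alpha_i$. First I would dispose of the trivial case $r \equiv 1 \pmod{p}$: then $G \cong C_p \times C_q$ is abelian of exponent $pq$, and since $\zeta_p, \zeta_q \in K$, Fischer's theorem (Theorem~\ref{thm:abel_case}) already gives rationality. From then on I assume $r \not\equiv 1 \pmod{p}$; since $p$ is prime, $\gcd(p, r-1) = 1$ and therefore $m' = p$, which matches the value of the norm given in the hypothesis.

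Next I expand the given $x$ in the standard $\mathbb{Z}$-basis $1, \zeta_q, \ldots, \zeta_q^{q-2}$ of $\mathbb{Z}[\zeta_q]$ as $x = \alpha_0 + \alpha_1 \zeta_q + \cdots + \alpha_{q-2}\zeta_q^{q-2}$ with $\alpha_i \in \mathbb{Z}$. Setting $d = \gcd(\alpha_0, \ldots, \alpha_{q-2})$ and writing $x = dx'$, the norm relation gives $p = N_{\mathbb{Q}(\zeta_q)/\mathbb{Q}}(x) = d^{q-1} N_{\mathbb{Q}(\zeta_q)/\mathbb{Q}}(x')$, and since $q \geq 3$ and $p$ is prime this forces $d = 1$.

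The one genuine obstacle is to arrange that $\sum_{i=0}^{q-2} \alpha_i r^i$ is divisible by $p$, so that $a_1 := \tfrac{1}{p}\sum \alpha_i r^i$ is a well-defined integer. For this I would observe that the order of $r$ in $\mathbb{F}_p^{\times}$ is exactly $q$, so the assignment $\zeta_q \mapsto r$ extends to a ring homomorphism $\phi : \mathbb{Z}[\zeta_q] \to \mathbb{F}_p$. Factoring the norm as $N_{\mathbb{Q}(\zeta_q)/\mathbb{Q}}(x) = \prod_{\sigma \in \mathrm{Gal}(\mathbb{Q}(\zeta_q)/\mathbb{Q})} \sigma(x)$ and applying $\phi$ yields $0 = \phi(p) = \prod_\sigma \phi(\sigma(x))$ in the integral domain $\mathbb{F}_p$, so at least one conjugate $\sigma(x)$ lies in $\ker \phi$. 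Since replacing $x$ by $\sigma(x)$ preserves both the norm and the property that the coefficient-$\gcd$ equals $1$ (by the same $d^{q-1}$-argument), I may assume from the start that $\phi(x)=0$; equivalently $\sum \alpha_i r^i \equiv 0 \pmod{p}$, producing $a_1 \in \mathbb{Z}$. The required coprimality $\gcd(a_1, \alpha_0, \ldots, \alpha_{q-2}) = 1$ is then immediate from $\gcd(\alpha_i) = 1$.

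With these choices all the hypotheses of the Main Theorem are met, so $K(G)$ is rational over $K$. The heart of the argument is thus the Galois-conjugation step that realizes the congruence $\sum \alpha_i r^i \equiv 0 \pmod p$; beyond that the proof is bookkeeping about coprimality and the value of $m'$.
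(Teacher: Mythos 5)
Your proposal is correct and follows essentially the same route as the paper: reduce to the Main Theorem, show $\gcd(\alpha_i)=1$ via the $d^{q-1}\mid p$ argument, and use the reduction homomorphism $\zeta_q\mapsto r$ applied to the factored norm to force one Galois conjugate into the kernel. The only cosmetic difference is that you replace $x$ by the conjugate $\sigma(x)$ whereas the paper replaces $r$ by $r^k$ (using that $C_p\rtimes_r C_q$ is independent of the choice of $r$ of order $q$); these are equivalent, and your explicit treatment of the degenerate case $r\equiv 1\pmod p$ is a harmless addition.
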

\begin{proof}
   Suppose $x = \alpha_0 + \alpha_1 \zeta_q + \cdots + \alpha_{q-2} \zeta_{q}^{q-2} \in \mathbb{Z}[\zeta_q]$ satisfies $N_{\mathbb{Q}(\zeta_q)/\mathbb{Q}}(x) = p$. Let $d = \gcd \{ \alpha_0, \ldots, \alpha_{q-2} \}$, we have $d^{q-1}|N_{\mathbb{Q}(\zeta_q)/\mathbb{Q}}(x) = p$. It follows that $d = 1$.

   Now we show that if $r \in (\mathbb{Z}/p\mathbb{Z})^{\times}$ is of order $q$ and there exists
   $
      x = \alpha_0 + \alpha_1 \zeta_q + \cdots + \alpha_{q-2} \zeta_{q}^{q-2} \in \mathbb{Z}[\zeta_q]
   $
   satisfying $N_{\mathbb{Q}(\zeta_n)/\mathbb{Q}}(x) = p$, then there exists $k \in \mathbb{N}$ such that
   \begin{displaymath}
      \alpha_0 + \alpha_1 r^{k} + \cdots + \alpha_{q-2} r^{k(q-2)} \equiv 0 \pmod{p}.
   \end{displaymath}

   In fact, let $\tilde{\phi}:\mathbb{Z}[X] \rightarrow \mathbb{Z}/p\mathbb{Z}$ be the ring homomorphism such that $\tilde{\phi}(X) = r$.
   Since $r^{q-1} + \cdots + r + 1 \equiv 0 \pmod{p}$, we have $\langle X^{q-1} + \cdots + X + 1 \rangle \subseteq \ker \tilde{\phi}$.
   Hence there exists a well-defined ring homomorphism $\phi:\mathbb{Z}[\zeta_q] \rightarrow \mathbb{Z}/p\mathbb{Z}$, which maps $\zeta_q$ to $r$.

   In $\mathbb{Q}(\zeta_q)$, we have
   \begin{align*}
      \mathbb{Z}/p\mathbb{Z} \ni 0 &= \phi(N_{\mathbb{Q}(\zeta_q)/\mathbb{Q}}(x)) \\
      &= {
         \phi(\alpha_0 + \alpha_1 \zeta_q + \cdots + \alpha_{q-2} \zeta_{q}^{q-2}) \cdot
         \phi(\alpha_0 + \alpha_1 \zeta_q^{2} + \cdots + \alpha_{q-2} \zeta_{q}^{2(q-2)})
         \cdots
      } \\
      &{
         \qquad
         \phi(\alpha_0 + \alpha_1 \zeta_q^{q-1} + \cdots + \alpha_{q-2} \zeta_{q}^{(q-2)(q-1)})
      } \\
      &= {
         (\alpha_0 + \alpha_1 r + \cdots + \alpha_{q-2} r^{q-2}) \cdot
         (\alpha_0 + \alpha_1 r^{2} + \cdots + \alpha_{q-2} r^{2(q-2)})
         \cdots
      } \\
      &{
         \qquad
         (\alpha_0 + \alpha_1 r^{q-1} + \cdots + \alpha_{q-2} r^{(q-2)(q-1)})
      }.
   \end{align*}
   Hence there exists $k \in \mathbb{N}$ such that
   \begin{displaymath}
      \alpha_0 + \alpha_1 r^{k} + \cdots + \alpha_{q-2} r^{k(q-2)} \equiv 0 \pmod{p}.
   \end{displaymath}
   
   Note that $G = C_{p} \rtimes_{r} C_{q}$ is unique up to isomorphism, the choice of $r$ can be arbitrary.
   Hence we may replace $r$ by $r^{k}$, which completes the proof.
\end{proof}

\begin{cor}[C.f. \cite{Kang2009-rpfsmg}] \label{cor:kangs_result}
   Let $p, q$ be odd primes such that $\mathbb{Z}[\zeta_q]$ is a unique factorization domain.
   Let $K$ be a field such that neither $p$ nor $q$ divides the characteristic of $K$ and both $\zeta_p, \zeta_q$ lie in $K$.
   Let
   \begin{displaymath}
      G = C_p \rtimes_r C_q = \langle \sigma_1, \sigma_2 : \sigma_1^p = \sigma_2^q = 1, \sigma_2^{-1}\sigma_1\sigma_2 = \sigma_1^r \rangle,
   \end{displaymath}
   where $r^q \equiv 1 \pmod{p}$.
   Then $K(G)$ is rational over $K$.

   In fact, $\mathbb{Z}[\zeta_q]$ is unique factorization domain if and only if $q < 23$.
\end{cor}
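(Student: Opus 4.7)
The plan is to reduce this corollary to Theorem~\ref{thm:most_important_case} by producing, from the UFD hypothesis, an element $x \in \mathbb{Z}[\zeta_q]$ with $N_{\mathbb{Q}(\zeta_q)/\mathbb{Q}}(x) = p$.

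First I would dispose of the trivial case $r \equiv 1 \pmod{p}$: then $G \cong C_p \times C_q \cong C_{pq}$ is abelian of exponent $pq$, and since $\gcd(p,q)=1$ together with $\zeta_p, \zeta_q \in K$ gives $\zeta_{pq} \in K$, rationality is immediate from Fischer's theorem (Theorem~\ref{thm:abel_case}).

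In the nontrivial case $r \not\equiv 1 \pmod{p}$, the order of $r$ in $(\mathbb{Z}/p\mathbb{Z})^{\times}$ is a nontrivial divisor of the prime $q$, hence equals $q$, which forces $q \mid p-1$. In particular $p \equiv 1 \pmod{q}$, so the residue degree of any prime of $\mathbb{Z}[\zeta_q]$ above $p$ equals the multiplicative order of $p$ modulo $q$, which is $1$. Thus $p$ splits completely in $\mathbb{Z}[\zeta_q]$:
\begin{displaymath}
   p\,\mathbb{Z}[\zeta_q] = \mathfrak{p}_1 \cdots \mathfrak{p}_{q-1}, \qquad N(\mathfrak{p}_i) = p.
\end{displaymath}
Because $\mathbb{Z}[\zeta_q]$ is a Dedekind domain that is also a UFD, it is automatically a PID; so $\mathfrak{p}_1 = (x)$ for some $x \in \mathbb{Z}[\zeta_q]$, and the element norm $N_{\mathbb{Q}(\zeta_q)/\mathbb{Q}}(x)$ equals the ideal norm $N(\mathfrak{p}_1) = p$ up to sign. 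Since $q$ is an odd prime, $\mathbb{Q}(\zeta_q)$ is totally imaginary, and pairing complex-conjugate embeddings gives $N_{\mathbb{Q}(\zeta_q)/\mathbb{Q}}(y) \geq 0$ for every $y$. Hence $N_{\mathbb{Q}(\zeta_q)/\mathbb{Q}}(x) = p$, and Theorem~\ref{thm:most_important_case} yields that $K(G)$ is rational over $K$.

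The final assertion is not really a separate argument but a citation: restricting the Masley--Montgomery classification recalled in the introduction to prime values of $q$ gives UFD precisely for $q \in \{2,3,5,7,11,13,17,19\}$, which is equivalent to $q < 23$ (the first failing prime being $q=23$). No substantive obstacle arises here; the only conceptual hinge is the implication ``$\mathbb{Z}[\zeta_q]$ UFD $\Rightarrow$ there is a generator of norm exactly $p$,'' which is handled by the Dedekind + UFD $\Rightarrow$ PID equivalence together with the totally-imaginary norm-positivity remark.
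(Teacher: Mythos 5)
Your proposal is correct and follows essentially the same route as the paper: reduce to Theorem~\ref{thm:most_important_case}, observe that $p$ splits completely in $\mathbb{Z}[\zeta_q]$ (you via the order of $p$ modulo $q$, the paper via the splitting of $X^{q-1}+\cdots+X+1$ modulo $p$ --- the same standard fact), and use UFD $\Rightarrow$ PID plus positivity of the norm in the totally imaginary field $\mathbb{Q}(\zeta_q)$ to extract a generator of norm exactly $p$. Your explicit treatment of the degenerate case $r \equiv 1 \pmod{p}$ is a minor extra care the paper leaves implicit.
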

\begin{proof}
   \setcounter{step}{0}
   By Theorem \ref{thm:most_important_case}, it suffices to show that there exists $x \in \mathbb{Z}[\zeta_q]$ such that $N_{\mathbb{Q}(\zeta_q)/\mathbb{Q}}(x) = p$.

%

	Note that since $r^{q} \equiv 1 \pmod{p}$, $\mathbb{Z}/p\mathbb{Z}$ contains a primitive $q$th-root of unity, and therefore the polynomial $X^{q-1} + \cdots + X + 1$ splits into linear factors in $\mathbb{Z}/p\mathbb{Z}$.
   Since $\mathbb{Z}[\zeta_q]$ is a unique factorization domain, it's a principal ideal domain.
   Write $p\mathbb{Z}[\zeta_q] = \mathfrak{p}_{1} \cdots \mathfrak{p}_{q-1}$, where
   $\mathfrak{p}_i = \langle \pi_i \rangle$ for $i = 1, \ldots, q-1$.
   Then each pair of $\pi_{i}, \pi_{j}$ are conjugate and
   \begin{align*}
      p\mathbb{Z} &= {
         \langle \pi_1 \cdots \pi_{q-1} \rangle
      }
      = {
         \langle \prod_{\sigma} \sigma(\pi_{1}) \rangle
      }
      = {
         \langle N_{\mathbb{Q}(\zeta_q)/\mathbb{Q}}(\pi_1) \rangle
      }.
   \end{align*}
   By positivity of norms, we get $p = N(\pi_1)$.
\end{proof}

In the following, we give a class of semidirect product groups, which are not semidirect product of two simple cyclic groups, but do satisfy the conditions of the Main Theorem.

\begin{cor} \label{cor:class_of_semidirect_prod_of_nonsimple_grps}
   Let $q$ be an odd prime, $m = \alpha q^{k}$, where $k \geq 2$ and $q \nmid \alpha$.
   Let
   \begin{displaymath}
      G = C_m \rtimes_r C_q = \langle \sigma_1, \sigma_2 : \sigma_1^m = \sigma_2^q = 1, \sigma_2^{-1}\sigma_1\sigma_2 = \sigma_1^r \rangle,
   \end{displaymath}
   where $r = \alpha q^{k-1} + 1$.
   Then $\mathbb{C}(G)$ is rational over $\mathbb{C}$.
\end{cor}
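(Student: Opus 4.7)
The plan is to apply the Main Theorem with $n = q$. Since $K = \mathbb{C}$ has characteristic $0$ and contains all roots of unity, only two nontrivial hypotheses need to be checked: (a) the congruence $r^q \equiv 1 \pmod{m}$, together with the explicit value of $m'$; and (b) the existence of relatively prime integers $a_1, \alpha_{q-2}, \ldots, \alpha_0$ with $a_1 m' = \sum_{i} \alpha_i r^i$ such that the associated element $x = \sum_{i} \alpha_i \zeta_q^i \in \mathbb{Z}[\zeta_q]$ has norm $m'$.

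For (a), I would first compute $r - 1 = \alpha q^{k-1}$, which together with $m = \alpha q^k$ gives $\gcd(m, r-1) = \alpha q^{k-1}$ and hence $m' = q$. For the congruence, expand
\begin{displaymath}
   r^q - 1 = \sum_{i=1}^{q} \binom{q}{i} (\alpha q^{k-1})^i.
\end{displaymath}
The $i=1$ term is exactly $\alpha q^k = m$. For $i \geq 2$ the hypothesis $k \geq 2$ gives $i(k-1) \geq k$, and together with the factor $\alpha^i$ this makes each higher term divisible by $\alpha q^k = m$. Hence $r^q \equiv 1 \pmod{m}$.

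For (b), the natural candidate is $x := \zeta_q - 1$, whose norm is
\begin{displaymath}
   N_{\mathbb{Q}(\zeta_q)/\mathbb{Q}}(x) = \prod_{i=1}^{q-1}(\zeta_q^i - 1) = (-1)^{q-1}\Phi_q(1) = q = m',
\end{displaymath}
using that $q$ is odd. The coefficients are $\alpha_0 = -1$, $\alpha_1 = 1$, $\alpha_2 = \cdots = \alpha_{q-2} = 0$, and the required linear identity reduces to $a_1 q = -1 + r = \alpha q^{k-1}$, giving $a_1 = \alpha q^{k-2}$, a positive integer precisely because $k \geq 2$. The coprimality condition $\gcd(a_1, \alpha_{q-2}, \ldots, \alpha_0) = 1$ is automatic from $\alpha_1 = 1$.

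All hypotheses of the Main Theorem are thus satisfied, so the conclusion that $\mathbb{C}(G)$ is rational over $\mathbb{C}$ follows immediately. There is no real obstacle here: the role of the assumption $k \geq 2$ is precisely to make $\alpha q^{k-2}$ an integer, so that the simplest available norm-$q$ element $\zeta_q - 1$ of $\mathbb{Z}[\zeta_q]$ slots directly into the arithmetic condition of the Main Theorem without having to invoke a more delicate construction.
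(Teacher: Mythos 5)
Your proof is correct and follows essentially the same route as the paper: compute $m'=q$, take the norm-$q$ element $\pm(1-\zeta_q)$ whose norm is $\Phi_q(1)=q$, and invoke the Main Theorem. You are in fact more careful than the paper, which dismisses the congruence $r^q\equiv 1\pmod m$ and the linear identity $a_1m'=\sum_i\alpha_i r^i$ as ``clear,'' whereas you verify them explicitly (the binomial expansion and $a_1=\alpha q^{k-2}$, which is where $k\ge 2$ enters).
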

\begin{proof}
   It's clear that $r^{q} = (\alpha q^{k-1} + 1)^{q} \equiv 1 \pmod m$ and $m' = \frac{m}{\gcd(m, r-1)} = q$.

   Put $x = 1-\zeta_{q}$. 
   Note that $X^{q-1} + \cdots + X + 1 = (X-\zeta_{q})(X-\zeta_{q}^{2})\cdots(X-\zeta_{q}^{q-1})$.
   Substitute $X$ by $1$, we get
   \begin{align*}
      N_{\mathbb{Q}(\zeta_{q})/\mathbb{Q}}(1-\zeta_{q})
      = {
         (1-\zeta_{q})(1-\zeta_{q}^{2})\cdots(1-\zeta_{q}^{q-1})
      }
      = q.
   \end{align*}
   Hence by Main Theorem, $\mathbb{C}(G)$ is rational over $\mathbb{C}$.
\end{proof}

We use a computer to find valid pairs $p, q$ and elements $x$ for the Main Theorem.
Below, we list some valid examples, which are not covered by Corollary \ref{cor:kangs_result},
but are rational by the Main Theorem.
\begin{example}
The following triples conform to the conditions of Theorem \ref{thm:most_important_case}: \\
$
(q, p, x) = 
$ \\
$
   (29, 5801, 1 + \zeta_{29} + \zeta_{29}^{4}),
   (29, 4931, 1 - \zeta_{29}^2 + \zeta_{29}^5), 
   (29, 7193, 1 + \zeta_{29}^2 + \zeta_{29}^5), 
$ \\
$
   (29, 9803, -1 + \zeta_{29} + \zeta_{29}^{4}), 
   (29, 12413, -1 + \zeta_{29}^2 + \zeta_{29}^5), 
   (29, 18097, 1 + \zeta_{29} + \zeta_{29}^4), 
$ \\
$
   (29, 18503, 1 - \zeta_{29} + \zeta_{29}^5), 
   (29, 21577, 1 + \zeta_{29}^2 + \zeta_{29}^3), 
   (31, 5953, -1 - \zeta_{31} + \zeta_{31}^{3}), 
$ \\
$
   (31, 6263, 1 - \zeta_{31} + \zeta_{31}^{3}), 
   (31, 11657, 1 + \zeta_{31} + \zeta_{31}^4), 
   (31, 16741, -1 - \zeta_{31} + \zeta_{31}^4), 
$ \\
$
   (31, 20089, -1 + \zeta_{31} + \zeta_{31}^6), 
   (37, 32783, 1 - \zeta_{37} + \zeta_{37}^{3}), 
   (37, 68821, -1 + \zeta_{37}^2 + \zeta_{37}^5), 
$ \\
$
   (37, 108929, 1 + \zeta_{37}^2 + \zeta_{37}^5), 
   (37, 132313, -1 + \zeta_{37} + \zeta_{37}^4), 
   (37, 172717, -1 - \zeta_{37} + \zeta_{37}^4), 
$ \\
$
   (37, 262553, 1 - \zeta_{37}^3 + \zeta_{37}^4), 
   (41, 101107, -1 - \zeta_{41} + \zeta_{41}^{3}), 
   (41, 337759, 1 + \zeta_{41} + \zeta_{41}^4), 
$ \\
$
   (41, 340793, -1 + \zeta_{41}^2 + \zeta_{41}^5), 
   (41, 348911, 1 - \zeta_{41}^2 + \zeta_{41}^5), 
   (41, 432059, 1 + \zeta_{41}^2 + \zeta_{41}^5).
$
\end{example}

\section*{Acknowledgement}
We are grateful to the referee for not only careful reading our paper, 
but also providing a simplified proof to Corollary \ref{cor:kangs_result}
and informing us the paper written by E. Beneish and N.Ramsey.
These improve the quality of our paper.

   \clearpage
   \bibliographystyle{elsart-num-sort}
   \bibliography{export}
\end{document}